\numberwithin{equation}{section}
\newtheorem{theorem}{Theorem}[section]
\newtheorem{Prop}[theorem]{Proposition}
\newtheorem{theo}{Theorem}[section] 
\newtheorem{lem}[theo]{Lemma}
\newtheorem{corll}[theo]{Corollary}
\newtheorem{rem}[theo]{Remark}
\newtheorem{clm}[theo]{Claim}
\newtheorem{prop}[theo]{Proposition}
\newtheorem{defi}[theo]{Definition}
\def \R {\mbox{I\hspace{-.15em}R}}
\def \C {\mbox{\,l\hspace{-.47em}C}}
\def \N {\mbox{I\hspace{-.15em}N}}
\DeclareMathOperator{\re}{Re}
\DeclareMathOperator{\im}{Im}
\title[3D  NLS outside a strictly convex obstacle]{Construction of solitary wave solution of the nonlinear focusing schr\"odinger equation outside a strictly convex obstacle for the $L^2$-supercritical case}
\author{Landoulsi Oussama}
\email[]{landoulsi@univ-paris13.fr}
\address{LAGA, Universit\'e Paris 13, Sorbonnee Paris Cit\'e, UMR 7539}
\begin{document}

\maketitle

\begin{abstract}

We consider the focusing $L^2$-supercritical Schr\"odinger equation in the exterior of a smooth, compact, strictly convex obstacle $\Theta \subset \R^3$. We construct a solution behaving asymptotically as a solitary waves on $\R^3$, as large time. When the velocity of the solitary wave is high, the existence of such a solution can be proved by a classical fixed point argument. To construct solutions with arbitrary nonzero velocity, we use a compactness argument similar to the one that was introduced by F.Merle in 1990 to construct solution of NLS blowing up at several blow-up point together with a topological argument using Brouwer's theorem to control the unstable direction of the linearized operator at soliton. These solutions are arbitrarily close to the scattering threshold given by a previous work of  R.\,Killip, M.\,Visan and X.\,Zhang which is the same as the one on whole Euclidean space.   

\end{abstract}

\tableofcontents
\newpage
\section{Introduction}
We consider the focusing nonlinear Schr\"odinger equation in the exterior of a smooth compact strictly convex obstacle $\Theta \subset \R^3$ with Dirichlet boundary conditions: \\
	\medskip
	\begin{multline}
	\label{NLS} 
		(\text{NLS}{_\Omega})
			\left\{
			\begin{array}{rrrrrrrr}
			\begin{aligned}
	i\partial_tu+\Delta_{\Omega} u &= -|u|^{p-1}u  \qquad  &\forall (t,x)\in [T_0,+\infty)\times\Omega ,\\ u(T_0,x) &=u_0(x)   &\forall x \in \Omega  , \\
	u(t,x)&=0  &\forall(t,x)\in[T_0,+\infty)\times \partial\Omega .
		\end{aligned}
			\end{array}
		\right. \end{multline}  
	\medskip	
     Where $\Omega=\R^3 \setminus \Theta$, $\Delta_{\Omega}$ is the Dirichlet Laplace operator on $\Omega$ which is a self-adjoint operator with form domain $H^1_0(\Omega)$, $\partial_t$ is the derivative with respect to the time variable and $T_0 \in \R$ is the initial time. Here $u$ is a complex-valued function,
\begin{align*}
 u:  [T_0,+\infty)&\times  \Omega \longrightarrow \C \\
 	(t &, x)  \longmapsto  u(t,x) .
 \end{align*}
    We take the initial data $u_0 \in H^1_0(\Omega)$.  \\ 
 
The local Cauchy problem for (NLS$_\Omega$) in $H^1_0(\Omega)$ was studied in several articles. For $1<p<5$, L.\,Vega and F.\,Planchon proved that (NLS) equation in the exterior of a non-trapping domain in $\R^3$ is locally well-posed, see \cite{PlVe09}. After that,  F.\,Planchon and O.\,Ivanovici extended the result to the quintic Schr\"odinger equation outside a non-trapping domain see \cite{MR2683754}.\\

 The solutions of the (NLS$_{\Omega}$) satisfy the mass and energy conservation laws: 
 \begin{align*}
 M(u(t)) &:= \int_{\Omega} |u(t,x)|^2 dx = M(u_0) . \\
E(u(t)) &:= \frac{1}{2} \int_{\Omega} |\nabla u(t,x)|^2 dx - \frac{1}{p+1} \int_{\Omega}\left|u(t,x)\right|^{p+1}dx=E(u_0).  
\end{align*} 
 
 Furthermore, the (NLS) equation posed on the whole Euclidean space $\R^3$ is invariant by the scaling transformation, that is, 
 \begin{equation*}
 u(t,x) \longmapsto \lambda^{\frac{2}{p-1}}u(\lambda x,\lambda^2 t) \;, \; \text{ for }  \lambda >0.
 \end{equation*}
 
 This scaling identifies the critical Sobolev space $\dot{H}^{s_c}_{x}$, where the critical regularity $s_c$ is given by $s_c:=\frac{3}{2} - \frac{2}{p-1}$. The case when $ s_c=0$ is referred to as mass-critical or $L^2$-critical and the case when $s_c=1$ is called energy-critical or $H^1$-critical. \\  

Throughout this paper, we will take $ \frac{7}{3}<p<5$. Since the presence of the obstacle does not change
the intrinsic dimensionality of the problem, so we may regard (NLS$_{\Omega}$) equation as being $H^1(\Omega)$-subcritical and $L^2(\Omega)$-supercritical.  \\

Consider solitary waves solution of  (NLS$_{\Omega}$), with $\Omega=\R^3$, that is $u(t,x)=e^{i t \omega }Q_{\omega}(x)$ \\ 
where $ Q_\omega $ is a solution of the nonlinear elliptic equation: 
 \begin{equation}
 \label{eq_Q}
 \begin{cases}
 -\Delta Q_\omega + \omega \, Q_\omega= \left| Q_\omega\right|^{p-1} Q_\omega ,   \\
 \;  Q_\omega \in H^1(\R^3).
 \end{cases}
 \end{equation} 
 
This elliptic equation admits solutions if and only if $\omega > 0$. In this paper, we will denote by $ Q_\omega $ the ground state which is the unique radial positive solution of \eqref{eq_Q}. We recall that $Q_{\omega}$ is smooth and exponentially decaying at infinity and characterized as the unique minimizer for the Gagliardo-Nirenberg inequality up to scaling, space translation and phase shift, see \cite{Kwong89}. \\

The (NLS) equation posed on the whole Euclidean space $\R^3$, also enjoys Galilean invariance. If $u(t,x)$ is solution, then $u(t,x-vt) \, e^{i(\frac{x.v}{2} -   \frac{ | v |^2   }{4} t ) } $ is also a solution, for $v \in \R^3$.\\

Applying a Galilean transform to the solution $e^{i  t \omega } Q_{\omega}(x)$ of the (NLS) on $\R^3$, we obtain a soliton solution, moving on the line $x=t v$ with velocity $v \in \R^3:$ 
\begin{equation}
\label{soliton}
u(t,x) = e^{i(\frac{1}{2}(x.v)-\frac{1}{4} \left| v\right|^2 t + t \, \omega )}Q_{\omega}(x-t \,v).
\end{equation} 
 
The soliton \eqref{soliton} is a global solution of the focusing nonlinear Schr\"odinger equation (NLS) posed on the whole space, but is not a solution of (NLS$_\Omega$). Our goal is to construct solitary waves of the (NLS$_{\Omega}$) satisfying Dirichlet boundary conditions and behaving asymptotically as the preceding solitary waves $e^{i(\frac{1}{2}(x.v)-\frac{1}{4} \left| v\right|^2 t + t \, \omega )}Q_{\omega}(x-t \,v)  $,  as $ t \longrightarrow +\infty $. \\
The main result of this paper is the following. 
\begin{theo}
\label{theorem-pranp}
Assume $\frac{7}{3}<p<5$.\\ 	
Let $\Psi$ be a $C^{\infty}$ function such that: $\left\{ \begin{array}{rr}
\Psi=0 & \; \text{near} \;\, \Theta , \\
\Psi=1 & \; if \;  \left|x\right| \gg 1 .
 \end{array}\right.$ \\
Let $v \in \R^3 \backslash \{ 0 \}$ be the velocity, $\omega>0$. Then there exists $\delta > 0$, $T_0>0$  and  a function $r_\omega$ defined on $[T_0 , + \infty ) \times \Omega $  satisfying $$\left\|r_\omega (t) \right\|_{H^1 _0(\Omega)} \leq  e^{-\delta \sqrt \omega |v| t }   \qquad \forall t \in [T_0,+\infty), $$ 
	such that, \begin{equation*}
	    \label{solution-u}
	 u(t,x)=e^{i(\frac{1}{2}(x.v)-\frac{1}{4}|v|^2 t +t \, \omega)} Q_\omega(x-tv) \Psi(x) + r_\omega(t,x), \quad \forall (t,x) \in [T_0,+\infty) \times \Omega, \end{equation*}
	is a solution of \rm{(NLS$_\Omega$).}
\end{theo}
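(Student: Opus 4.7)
The plan is to construct $u$ as a perturbation of the natural ansatz
$$U(t,x) := e^{i(\tfrac{1}{2}(x\cdot v) - \tfrac{1}{4}|v|^2 t + t\omega)}\, Q_\omega(x-tv)\,\Psi(x),$$
which already satisfies the Dirichlet condition on $\partial\Omega$ because of the cut-off $\Psi$. First I would compute the residual
$$\mathcal{E}(t,x) := i\partial_t U + \Delta_\Omega U + |U|^{p-1}U.$$
The contributions from the region where $\Psi\equiv 1$ cancel by the elliptic equation \eqref{eq_Q} (after accounting for the Galilean phase). Hence $\mathcal{E}$ is supported in a bounded neighbourhood of $\Theta$, involving $\nabla\Psi\cdot\nabla Q_\omega(\cdot-tv)$, $(\Delta\Psi)\,Q_\omega(\cdot-tv)$ and $(\Psi^p-\Psi)|Q_\omega|^{p-1}Q_\omega(\cdot-tv)$. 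Since $v\neq 0$, on this support $|x-tv|\gtrsim |v|t$ for $t$ large, and the exponential decay $|Q_\omega(y)|+|\nabla Q_\omega(y)|\lesssim e^{-\sqrt{\omega}|y|}$ produces an estimate of the form $\|\mathcal{E}(t)\|_{H^{-1}(\Omega)}\lesssim e^{-\delta_0\sqrt{\omega}|v|t}$.

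Next I would write $u=U+r$ and set up the equation for $r$:
$$i\partial_t r + \Delta_\Omega r + \bigl(|U+r|^{p-1}(U+r)-|U|^{p-1}U\bigr) = -\mathcal{E},$$
with Dirichlet boundary condition and the final-time requirement $\|r(t)\|_{H^1_0}\to 0$ as $t\to+\infty$. After the change of variables $w(t,x) := e^{-i\phi}r(t,x+tv)$ the linear part becomes (up to an exponentially small boundary correction) the standard linearised NLS operator at the ground state $Q_\omega$. Since $p>7/3$ is $L^2$-supercritical, this matrix Hamiltonian $\mathcal{L}$ has a finite-dimensional real unstable subspace spanned by an eigenfunction with eigenvalue $+e_0$ for some $e_0>0$, in addition to the (generalised) kernel coming from the symmetries of NLS on $\mathbb{R}^3$. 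When $|v|$ is very large, $\delta\sqrt{\omega}|v|>e_0$ and a contraction in the weighted space $\{\|r(t)\|_{H^1_0}\le Ce^{-\delta\sqrt{\omega}|v|t}\}$ yields $r$ directly. For arbitrary $v\neq 0$, this fails because the source's decay rate can be smaller than $e_0$.

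To treat the general case I would follow the compactness scheme of Merle combined with a topological correction. Pick a sequence $T_n\to+\infty$ and solve (NLS$_\Omega$) backward from $t=T_n$ with data
$$u_n(T_n) = U(T_n) + a_n,$$
where $a_n$ runs over the finite-dimensional unstable subspace of $\mathcal{L}$. So long as $u_n$ stays in a small tube around $U$, introducing modulation parameters to orthogonalise $r_n:=u_n-U$ to the kernel of $\mathcal{L}$ gives a coercivity/Lyapunov estimate on the stable component, controlled by $\|\mathcal{E}\|\lesssim e^{-\delta\sqrt{\omega}|v|t}$, while the unstable component satisfies a finite-dimensional ODE $\dot y = e_0 y + O(e^{-2\delta\sqrt{\omega}|v|t})$. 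Brouwer's theorem applied to the map that sends $a_n\mapsto y(T_0;a_n)$ then furnishes a choice of $a_n$ for which $y\equiv 0$ and consequently the bootstrap bound $\|r_n(t)\|_{H^1_0}\le e^{-\delta\sqrt{\omega}|v|t}$ holds uniformly on $[T_0,T_n]$.

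Finally, the uniform $H^1_0$-bound together with the spatial localisation inherited from $Q_\omega(\cdot-tv)$ allows extraction of a weak limit $u_n(T_0)\rightharpoonup u(T_0)$; a concentration argument adapted to the exterior domain upgrades this to strong $H^1_0$-convergence, and the solution launched from $u(T_0)$ inherits the desired pointwise-in-time exponential estimate by lower semicontinuity and persistence of the bootstrap on $[T_0,+\infty)$. The hardest step will be the modulation/Brouwer part: one must work with the linearisation of (NLS$_\Omega$) rather than of NLS on $\mathbb{R}^3$, carefully verify that the errors coming from $\Psi$, from the boundary $\partial\Omega$, and from the non-orthogonality of the ansatz to the kernel remain of higher order $e^{-2\delta\sqrt{\omega}|v|t}$, and check the nondegeneracy of the Brouwer map on the unstable subspace.
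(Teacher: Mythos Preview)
Your outline follows the paper's strategy closely: build approximate backward-in-time solutions with a correction on the unstable eigendirection, control the neutral directions by modulation and the coercivity of the linearised operator, fix the unstable parameter by a Brouwer-type argument, then pass to the limit. Two points where your sketch diverges from what is actually carried out, and where the real difficulty sits:

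\textbf{The compactness step.} You assert that a concentration argument upgrades the weak $H^1_0$ limit of $u_n(T_0)$ to strong $H^1_0$. The paper does \emph{not} do this, and it is not clear it can be done directly: energy conservation alone is insufficient because $E(\mathcal{U}_0)$ is not known a priori, and a profile decomposition yields a splitting, not strong convergence. What is actually established is strong $L^2$ convergence (Rellich on bounded sets, plus a cut-off/virial tightness lemma to rule out mass escaping to infinity), hence strong $H^s$ convergence for every $s<1$ by interpolation. The limit solution is then launched via an $H^s$ local well-posedness theory for (NLS$_\Omega$) with $s_p\le s<1$; continuity of the $H^s$ flow transports the bootstrap bound to the limit, and only then does weak lower semicontinuity give the $H^1_0$ estimate pointwise in $t$. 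This $H^s$ LWP step is precisely where the strict convexity of $\Theta$ is used (Strichartz estimates on the exterior domain and the fractional chain rule of Killip--Visan--Zhang). Without it you have no mechanism to define the limit flow.

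\textbf{The Brouwer argument.} The map you propose, $a_n\mapsto y(T_0;a_n)$, presupposes that every approximate solution exists and remains modulable all the way down to $T_0$, which is exactly what has to be proved. The paper argues instead with the \emph{exit time} $T(\alpha^+)$ from the bootstrap region: assuming $T(\alpha^+)>T_0$ for every choice of the unstable parameter, the stable and neutral estimates are shown to be strictly improved (to half-size) at $T(\alpha^+)$, so only $|\alpha^+(T(\alpha^+))|=e^{-\delta\sqrt\omega|v|T(\alpha^+)}$ can saturate. A transversality computation $\tfrac{d}{dt}\big|e^{\delta\sqrt\omega|v|t}\alpha^+(t)\big|^2<0$ at the exit time gives continuity of $\alpha^+\mapsto T(\alpha^+)$, and one obtains a continuous retraction of the closed ball onto its boundary, contradicting Brouwer. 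Note also that one does not obtain $y\equiv 0$; one only obtains a parameter for which the bootstrap never fails on $[T_0,T_n]$.

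Apart from these two points your skeleton is correct; in particular the role of $\Psi$ in producing an exponentially small residual and the need to track all error terms at order $e^{-2\delta\sqrt\omega|v|t}$ through the modulation and eigencomponent equations are as you describe.
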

\begin{rem} Theorem \ref{theorem-pranp} can be generalized for any dimension $d \geq 3$. Moreover, this result can be extended to the subcritical case $1<p<\frac{7}{3}$ which are easier to prove due to the stability of solitons.
\end{rem}
\begin{rem}
The restriction to a strictly convex obstacle is purely technical. In section 2, we will need that the (NLS$_\Omega$) equation is well posed on $H^s(\Omega)$, for some  $s \in [s_p,1[, $ with $s_p= \frac{3}{2}-\frac{3}{p+1}$ (Cf. Lemma  \ref{wellposed}), for that we need to use a Strichartz estimate from \cite{Ivanovici10} (Cf.~Theorem \ref{strichartz}) and some fractional rules given by \cite{killip2015riesz} for strictly convex obstacle (Cf. Proposition \ref{Prop-fractional-rule}). Because of this, we shall suppose that the obstacle $\Theta$ is strictly convex. \\ 
\end{rem}

In \cite{DuHoRo08}, T.\,Duyckaerts, J.\,Holmer and S.\,Roudenko have studied the behavior (i.e scattering and global existence) of the solutions of the focusing cubic (i.e p=3) nonlinear Schr\"odinger equation on $\R^3$, whenever the initial data satisfies a smallness criterion given by the ground state threshold. The criterion is expressed in terms of the scale-invariant quantities $\left\|u_0\right\|_{L^2}\left\| \nabla u_0 \right\|_{L^2}$ and $M(u)E(u).$ This result was later extended to arbitrary space dimensions and focusing mass-supercritical power nonlinearities by T.\,Cazenave, J.\, Xie and D.\, Fang, see \cite{FaXiCa11} and by~C.\,Guevara in \cite{Guevara14}.

\begin{theorem}[\cite{FaXiCa11},\cite{DuHoRo08},\cite{Guevara14}]
\label{thresholdOutObstacle}
Let $s=\frac{3}{2}-\frac{2}{p-1}$ and  $\frac{7}{3}<p<5.$ Let $u_0 \in H^1(\R^3)$ satisfy
\begin{align}
    \left\| u_0 \right\|^{1-s}_{L^2\footnotesize{\left(\R^3 \right)}}\left\| \nabla u_0 \right\|^{s}_{L^2\footnotesize{\left(\R^3 \right)}}  & < \left\| Q  \right\|^{1-s}_{L^2\footnotesize{\left(\R^3 \right)}} \left\| \nabla Q \right\|^{s}_{L^2\footnotesize{\left(\R^3 \right)}} , \\
    M(u_0)^{1-s} E(u_0)^s & < M(Q)^{1-s} E(Q)^{s}.
\end{align}
Then u scatters in $H^1(\R^3)$.

\end{theorem}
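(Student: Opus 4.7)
The plan is to follow the concentration-compactness and rigidity roadmap of Kenig--Merle, adapted to the $L^2$-supercritical, $H^1$-subcritical focusing NLS on $\R^3$; this is exactly the strategy carried out in the cubic case by Duyckaerts--Holmer--Roudenko and extended to the full range $\frac{7}{3}<p<5$ by Fang--Xie--Cazenave and by Guevara. Set $s=s_c$. The first step is variational: starting from the sharp Gagliardo--Nirenberg inequality, for which $Q$ is the radial extremal, one shows that the two strict sub-threshold inequalities in the hypothesis are preserved by the flow and that together they force a uniform lower bound on the Pohozaev functional $P(u):=\|\nabla u\|_{L^2}^2-\frac{3(p-1)}{2(p+1)}\|u\|_{L^{p+1}}^{p+1}$, as well as $\|\nabla u(t)\|_{L^2}\lesssim E(u_0)^{1/2}$. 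In particular the solution is global in $H^1(\R^3)$.

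Next, Strichartz estimates on $\R^3$ combined with the Sobolev embedding $\dot H^{s_c}\hookrightarrow L^{q_c}$ yield small-data scattering, and finiteness of an appropriate space-time scattering norm is shown to be equivalent to scattering. Arguing by contradiction, assume there exists sub-threshold data whose solution does not scatter forward, and define
$$\mathcal{E}_c:=\inf\bigl\{ M(u_0)^{1-s}E(u_0)^s \ :\ u_0 \text{ sub-threshold and } u \text{ does not scatter}\bigr\}<M(Q)^{1-s}E(Q)^s.$$
Take a minimizing sequence $u_{n,0}$ and apply an $H^1$ linear profile decomposition; the long-time perturbation lemma produces nonlinear profiles, while orthogonality of $M \cdot E$ across profiles together with minimality of $\mathcal{E}_c$ and the no-scattering assumption forces exactly one surviving profile. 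After normalizing translations (and possibly time shifts and Galilean boosts), one obtains a critical element $u_c\not\equiv 0$, sub-threshold, with $M(u_c)^{1-s}E(u_c)^s=\mathcal{E}_c$ and whose forward trajectory is pre-compact in $H^1(\R^3)$ modulo a continuous translation $x(t)$.

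Finally, one rules out the critical element via a localized virial identity
$$\frac{d^2}{dt^2}\int\chi_R(x)|u_c(t,x)|^2\,dx = 8\,P(u_c(t))+\mathrm{error}(R,t),$$
with $\chi_R$ a smooth truncation of $|x|^2$. The coercivity from the first step yields $P(u_c(t))\geq c_0>0$ uniformly in $t$, while the pre-compactness of the trajectory, together with a bound on $x(t)$ coming from conservation of momentum and Galilean invariance, lets one choose $R$ so large that the error is dominated by $c_0$. Integrating over a sufficiently long time interval contradicts the boundedness of the left-hand side; hence $u_c\equiv 0$, and Theorem \ref{thresholdOutObstacle} follows.

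The main obstacle is the rigidity step: one needs a \emph{strict} quantitative coercivity of the Pohozaev functional throughout the sub-threshold region, issuing from the variational characterization of $Q$ as a Gagliardo--Nirenberg extremal, and, more delicately, a bound on the translation parameter $x(t)$ of the critical element --- typically obtained by first showing it has zero momentum (via a Galilean modulation) and then propagating this through the compactness structure. The linear profile decomposition and long-time perturbation lemma underlying the critical-element construction are themselves technically substantial, but by now form a standard toolkit for $L^2$-supercritical focusing NLS.
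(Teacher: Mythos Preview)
The paper does not prove Theorem~\ref{thresholdOutObstacle}; it is quoted as an external result from \cite{FaXiCa11}, \cite{DuHoRo08}, and \cite{Guevara14}, and the paper only uses it as background to motivate the optimality of the threshold. Your sketch is a faithful outline of the Kenig--Merle concentration-compactness/rigidity scheme as carried out in those references, so there is nothing to compare against in the paper itself.
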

 
Theorem \ref{thresholdOutObstacle} remains true for (NLS$_\Omega$) in the exterior of a strictly convex obstacle in three dimension. Indeed, R.\,Killip, M.\,Visan and X.\,Zhang had proved in \cite{MR3483844} that the threshold for global existence and scattering is the same as for the cubic equation on $\R^3$. Moreover, K.\,Yang extended this result for $\frac{7}{3}<p<5$, see \cite{Kai17}. \\

The solitary waves constructed in the main Theorem \ref{theorem-pranp} prove the optimality of the threshold for scattering given in \cite[Theorem 1.3] {Kai17}. Indeed, the solution $u$ of (NLS$_\Omega$) is global, does not scatter for positive time direction and we have\begin{equation}
\label{E(u)}
E(u)= \frac{\left| v \right|^2}{8} \int \left| Q \right|^2 + E(Q) \, .
\end{equation}

Since, the velocity $v$ can be taken arbitrary small, we have proved that for all $ \varepsilon >0 $ there exists a solution $u_\varepsilon$ of (NLS$_\Omega$) which is global and does not scatter for positive time such that $$\displaystyle M(u_\varepsilon)=M(Q)\, ,   \quad  \sup_{ t \geq T_0}  \left\| \nabla u_\varepsilon(t) \right\|_{L^2(\Omega)}^s < \left\|  \nabla Q \right\|_{L^2\footnotesize{\left(\R^3 \right)}}^s + \varepsilon \; $$ and  $$ E(u_\varepsilon)^s < E(Q)^s  + \varepsilon\,.$$

The proof of Theorem \ref{theorem-pranp} relies on a compactness argument that uses the structure of the linearized operator around the ground state soliton. If the velocity $v$ is large enough, we can use a simple fixed point theorem to construct a soliton solution of (NLS$_\Omega$).
\begin{theo}
\label{theoremWithVelocityGrand}
Assume $ 2 \leq p <5$. \\Let $\Omega=\R^3 \setminus \Theta $ where $\Theta$ is any smooth compact obstacle and $Q_\omega$ be any solution~of \eqref{eq_Q}.
\\Let $\Psi$ be a $C^{\infty}$ function such that: $\left\{ \begin{array}{rr}
\Psi=0 & \; near \; \Theta ,\\
\Psi=1 & \; if \;  \left|x\right| \gg 1 .
 \end{array}\right.$ \\
Let $\omega,T_0>0$. Then there exists $V_0:=V_0(\omega) \gg 1 $ with the following property. Let $v \in \R^3$ be the velocity such that $\left| v \right| > V_0.$ \\ 

Then there exists $\delta>0$ and a functions $r_\omega$ defined on $[T_0,+\infty) \times \Omega $ satisfying
$$ \hspace{-2cm} \forall t \in [T_0,+\infty )  \qquad  \left\| r_\omega (t) \right\|_{H^2 \cap H^1_0(\Omega)} \leq C_{\omega}\left|v\right|^{3} e^{-\delta \sqrt{\omega} \left| v\right| t  } ,  $$
	such that   $  \; u(t,x)=e^{i(\frac{1}{2}(x.v)-\frac{1}{4}|v|^2 t +t \, \omega)} Q_\omega(x-tv) \Psi(x) + r_\omega(t,x), \quad \forall (t,x) \in [T_0,+\infty) \times \Omega, $\\
	is a solution of \rm{(NLS$_\Omega$).}
\end{theo}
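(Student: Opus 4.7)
The plan is a direct contraction-mapping argument. Set
\[ U(t,x) := e^{i\Phi(t,x)}\,Q_\omega(x-tv), \qquad \Phi(t,x) = \tfrac12(x\cdot v) - \tfrac14|v|^2 t + t\omega, \]
which is the Galilean boost of the ground state and solves (NLS) on $\R^3$. I search for a solution of (NLS$_\Omega$) in the form $u = \tilde u + r$ with $\tilde u := U\Psi$. Since $\Psi$ vanishes near $\Theta$, $\tilde u(t,\cdot)\in H^2\cap H^1_0(\Omega)$ automatically, and $u=\tilde u+r$ solves (NLS$_\Omega$) if and only if
\[ i\partial_t r + \Delta_\Omega r + \bigl(|\tilde u+r|^{p-1}(\tilde u+r)-|\tilde u|^{p-1}\tilde u\bigr) = -\mathcal E, \]
where, using $i\partial_t U+\Delta U+|U|^{p-1}U=0$, the error $\mathcal E := i\partial_t\tilde u + \Delta\tilde u + |\tilde u|^{p-1}\tilde u$ simplifies to
\[ \mathcal E = 2\,\nabla U\cdot\nabla\Psi + U\,\Delta\Psi + (\Psi^p-\Psi)|U|^{p-1}U. \]
Each term is supported in the bounded set $\{\nabla\Psi\neq 0\}\cup\{0<\Psi<1\}$ near $\Theta$; on this set $|x-tv|\gtrsim|v|t$ for $t\ge T_0$, so the exponential decay $|(\partial^\alpha Q_\omega)(x-tv)|\lesssim e^{-\sqrt\omega|v|t}$ applies, while each spatial derivative that hits the phase $e^{i\Phi}$ contributes a factor $|v|$. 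Collecting at most two derivatives yields $\|\mathcal E(t)\|_{H^2(\Omega)}\leq C_\omega|v|^3 e^{-\sqrt\omega|v|t}$ for $t\ge T_0$.

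Imposing $r(+\infty)=0$ turns the equation into the fixed-point problem $r=\mathcal M(r)$ with
\[ \mathcal M(r)(t) := -i\int_t^{+\infty} e^{i(t-s)\Delta_\Omega}\Bigl[\bigl(|\tilde u+r|^{p-1}(\tilde u+r)-|\tilde u|^{p-1}\tilde u\bigr)(s) + \mathcal E(s)\Bigr]\,ds. \]
I would solve it in the weighted space
\[ X := \Bigl\{r\in C\bigl([T_0,\infty); H^2\cap H^1_0(\Omega)\bigr) :\ \|r\|_X := \sup_{t\geq T_0} e^{\delta\sqrt\omega|v|t}\,\|r(t)\|_{H^2(\Omega)} < \infty\Bigr\} \]
for a fixed $\delta\in(0,1)$, on the closed ball of radius $C_\omega|v|^3$. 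The propagator $e^{it\Delta_\Omega}$ commutes with $\Delta_\Omega$ and is unitary on $L^2(\Omega)$, hence (by elliptic regularity) bounded on $H^2\cap H^1_0(\Omega)$ uniformly in $t$. Integrating the $H^2$-norm of the integrand over $s\in[t,\infty)$ gains a factor $(\delta\sqrt\omega|v|)^{-1}$, so the source $\mathcal E$ contributes at most $\lesssim C_\omega|v|^2 e^{-\sqrt\omega|v|t}$, which fits comfortably inside the target ball once $T_0$ is fixed. For the nonlinear piece, the crucial observation is that $|\tilde u|=Q_\omega(\cdot-tv)\,\Psi$ carries no Galilean phase, so $\||\tilde u|^{p-1}\|_{W^{2,\infty}(\Omega)}\le C_\omega$ \emph{independently of} $|v|$. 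Combining this with $p\ge 2$ (so $z\mapsto|z|^{p-1}z$ is $C^1$), the Sobolev embedding $H^2(\Omega)\hookrightarrow L^\infty(\Omega)$, and the algebra property of $H^2$ in three dimensions, the nonlinear difference is controlled by $C_\omega\bigl(1+\|r\|_{H^2}^{p-1}\bigr)\|r\|_{H^2}$. Its contribution to $\|\mathcal M(r)\|_X$ is then at most $\frac{C_\omega}{\sqrt\omega|v|}\|r\|_X + o(\|r\|_X)$, which is $<\tfrac12\|r\|_X$ as soon as $|v|>V_0(\omega)$. A parallel estimate on $\mathcal M(r_1)-\mathcal M(r_2)$ closes the Lipschitz bound, and the Banach fixed point theorem produces the required $r$.

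The delicate step is precisely the observation that $|\tilde u|$ is independent of the Galilean phase: without it, the $|v|^2$-growth of $\|\tilde u\|_{H^2}$ would beat the $(\sqrt\omega|v|)^{-1}$ gain from the time integration and block the contraction in the large-$|v|$ regime. With this observation, the coefficient in the nonlinear estimate is a fixed $C_\omega$, and the exponential factor $e^{-\sqrt\omega|v|t}$ dominates every polynomial in $|v|$ as soon as $T_0>0$ is fixed and $|v|$ is taken large enough. Convexity of $\Theta$ plays no role at this high-velocity stage; only the smoothness of $\partial\Omega$ is used, through the equivalence between the $H^2(\Omega)$-norm and the graph norm of $\Delta_\Omega$ on $H^2\cap H^1_0(\Omega)$.
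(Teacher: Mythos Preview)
Your overall architecture is exactly that of the paper: the same ansatz $u=\Psi U+r$, the same error $\mathcal E$ supported near the obstacle with the bound $\|\mathcal E(t)\|_{H^2}\le C_\omega|v|^3 e^{-\delta\sqrt\omega|v|t}$, the same Duhamel integral on $[t,\infty)$, and the same use of the unitarity of $e^{it\Delta_\Omega}$ on $H^2\cap H^1_0$. The gap is in the treatment of the linearised nonlinearity, and it is precisely at the point you single out as ``the delicate step''.

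You write that $\||\tilde u|^{p-1}\|_{W^{2,\infty}}\le C_\omega$ because $|\tilde u|=Q_\omega(\cdot-tv)\Psi$ carries no phase, and from this you conclude that the nonlinear difference is bounded in $H^2$ by $C_\omega(1+\|r\|_{H^2}^{p-1})\|r\|_{H^2}$ with a constant independent of $|v|$. But the linearisation of $z\mapsto|z|^{p-1}z$ at $\tilde u$ is
\[
\tfrac{p+1}{2}\,|\tilde u|^{p-1}\,r\;+\;\tfrac{p-1}{2}\,|\tilde u|^{p-3}\tilde u^{\,2}\,\bar r,
\]
and the second coefficient equals $e^{2i\Phi}|\tilde u|^{p-1}$, which \emph{does} carry the Galilean phase. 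Hence $\|\nabla^2(\tilde u^{\,2}\bar r)\|_{L^2}$ contains the term $\|\nabla^2\tilde u^{\,2}\|_{L^\infty}\|r\|_{L^2}\sim C_\omega|v|^2\|r\|_{L^2}$, so that in your norm
\[
\|\tilde u^{\,2}\bar r\|_{H^2}\;\le\;C_\omega|v|^2\|r\|_{H^2},
\]
with a genuine $|v|^2$ in front. After the time integration, which only gains $(\delta\sqrt\omega|v|)^{-1}$, the linear part of $\mathcal M$ has Lipschitz constant of order $C_\omega|v|$ in your space $X$, and no choice of $V_0$ makes this small. The same issue recurs in the quadratic terms $\tilde u\,|r|^2$ and $\bar{\tilde u}\,r^2$: using the algebra property forces $\|\tilde u\|_{H^2}\sim|v|^2$ into the estimate.

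The paper circumvents this by replacing your single $H^2$ weight with a two-component norm,
\[
\|r\|_E=\sup_{t\ge T_0}\,e^{\delta\sqrt\omega|v|t}\Bigl(\tfrac{1}{|v|^3}\|r(t)\|_{H^2}+\|r(t)\|_{L^2}\Bigr),
\]
and then estimates $\|A_1(r)\|_{L^2}$ and $\|\nabla^2 A_1(r)\|_{L^2}$ \emph{separately}. In the $H^2$ estimate, whenever two derivatives land on $R^2=e^{2i\Phi}Q_\omega^2\Psi^2$ and produce $|v|^2$, they are paired with the cheap $\|r\|_{L^2}$ part of the norm; when the derivatives land on $r$, the coefficient $\|R^2\|_{L^\infty}\le C_\omega$ has no $|v|$; the cross term is handled by the interpolation $\|\nabla r\|_{L^2}\le\|r\|_{H^2}^{1/2}\|r\|_{L^2}^{1/2}$. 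The outcome is $\|J_1(r)\|_E\le\dfrac{C_\omega}{|v|}\|r\|_E$, which does contract for $|v|>V_0(\omega)$. If you adopt this weighted norm (or, equivalently, work on the ball of radius $1$ in $E$ rather than the ball of radius $C_\omega|v|^3$ in $X$), the rest of your argument goes through unchanged.
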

Unlike in Theorem \ref{theorem-pranp}, $Q_\omega$ is any  solution of the nonlinear elliptic equation \eqref{eq_Q} (not necessarily the ground state) and $\Theta \subset \R^3 $ does not have to be convex, which makes Theorem~\ref{theorem-pranp}  more general for high velocity. However, we can see in \eqref{E(u)} that the choice of high velocity does not allow us to use Theorem \ref{theoremWithVelocityGrand} to show the optimality of the threshold for scattering in \cite{MR3483844} and \cite{Kai17}. Let us mention that, this result can be extended for any dimension $d \geq 3$. We will give the proof of the Theorem \ref{theoremWithVelocityGrand} for the cubic case $p=3$. The proof for general $p\in [2,5)$ is very similar, see Remark \ref{proof-for-any-p}. \\

Let us mention that apart from the works of R.\,Killip, M.\,Visan and X.\,Zhang and K.\,Yang cited above and in \cite{KiVisZha16}, (NLS$_{\Omega}$) outside obstacle was also studied by N.\,Burq, P.\,Gerard and N.\,Tzvetkov in \cite{BuGeTz04a} and F.\,Abou Shakra in \cite{Farah15}.
Let us also mention the recent works on dispersive estimates outside one or several strictly convex obstacles of O.\,Ivanovivi and G.\,Lebeau in \cite{IvanoLebo17} and D.\,Lafontaine in \cite{lafontaine2017strichartz}, \cite{lafontaine2018strichartz}.  \\ 

We end this section by giving sketch of the proofs of the two theorem. \\
\textbf{Sketch of the proof of  Theorem $\ref{theorem-pranp}$.} \\ 
The structure of the proof is similar to the one for construction of multi-soliton for (NLS) on $\R^d$ in the subcritical case in  \cite{MR2271697} with an additional argument coming from  \cite{MR2815738} which allows to handle the supercritical character of the non-linearity. The compactness argument used in the present paper is similar to the main argument used in  \cite{MR2271697},\cite{MR2815738}, and \cite{Merle90}. \\ 

Note that, even though we use some similar arguments, a large part of the proof of Theorem~\ref{theorem-pranp} is different.  It is because of the presence of the obstacle $\Theta$ which makes the calculations more complicated.  \\

Recall that the soliton $Q_{\omega}(x-t \,v)  e^{i(\frac{1}{2}(x.v)-\frac{1}{4} \left| v\right|^2 t + t \, \omega )}$ is an exact solution of the (NLS) on the whole space $\R^3$. So, the proof consists in the construction of a smooth correction $ r_\omega(t,x)$ with some uniform estimates, such that $  R(t,x)$ $+$ $r_\omega(t,x)$ is a solution of the equation  (NLS$_\Omega$) where 
$R(t,x) = e^{i(\frac{1}{2}(x.v)-\frac{1}{4} \left| v\right|^2 t + t \, \omega )}Q_{\omega}(x-t \,v) \,\Psi(x). $\\ 

The paper is organised as follows. In \S \ref{Properties of the ground state}, we give a review of some properties of the ground state $Q$. In \S \ref{Spectral theory of the linearized operator}, we recall some spectral properties of the linearized Schr\"odinger operator around the soliton $e^{it}Q$. That is, \\

In the subcritical case, Cazenave and Lions \cite{CaLi82}, Weinstein \cite{MR820338} proved that the solitary waves are stable when $1<p<\frac{7}{3}$, which means that the nonlinearity has a $L^2$-subcritical growth. \\ 
From \cite{MR820338}, there exits $\lambda >0 $ such that for any real-valued function $h \in H^1, $
$$\displaystyle (h,Q_\omega),(h,\nabla Q_\omega)=0 \Longrightarrow \displaystyle\int \{\left| \nabla h\right|^2 + \omega \left| h\right|^2 - p Q_\omega^{p-1} \left|Q_\omega \right| ^2  \} \geq \lambda \left\| h \right\|_{H^1}^2 .
$$
In \cite{MR2271697}, the authors use some modulation in the scaling, phase and translation parameters, to~control these two direction.\\

In the supercritical case, it is well known that the soliton is unstable, see \cite{MR901236}. Indeed, for~$\frac{7}{3}<p<5$, there exists two eigenfunctions of the linearized operator around the ground state Q, constructed by Weinstein \cite{Weinstein85}, Schlag \cite{Sc06}, Grillakis~\cite{MR1040143} and denoted by $\mathcal{Y}^{\pm}$. Thus, the above property of the linearized operator does not hold, but a effective coercivity property can be expressed in term of the eigenfunctions $\mathcal{Y}^{\pm}$, see Lemma \ref{spectalprop}. \\

In \S \ref{compactness argumenttt}, we suppose that there exists a solution $u_n$ of (NLS$_\Omega$) for $t\in[T_0,T_n]$ that satisfies some uniform estimate with initial data $u_n(T_n)$ and $T_n$ is an increasing sequence of times. Then by compactness argument we construct a solution~$u$ of (NLS$_\Omega$) for $[T_0,+\infty)$, with initial data $u(T_0)$ and $T_0>0$, which concludes the proof of Theorem \ref{theorem-pranp}. \\

In Section \ref{Proof of the uniform estimate}, we prove the existence of the solution $u_n$ and the uniform estimate assumed in the previous section. For this, we use a modulation for large time in the phase and translation parameters in the decomposition of the solution as above to obtain some orthogonality conditions. Next, we define a maximal time interval on which hold a suitable exponential estimates of the modulation parameters, the uniform estimate used in \S \ref{compactness argumenttt} and others terms expressed in function of $\mathcal{Y}^{+}$ and $\mathcal{Y}^{-}$. In order to control these estimates, we use a bootstrap argument with the coercivity property of the linearized operator. Indeed, the linearized operator $(\mathcal{L} \;  \cdot  \, , \, \cdot) $ is positive definite up to the four directions $Q$, $\partial_x Q$ and $ \mathcal{Y}^{\pm}$, see \cite{DuMe09a} and \cite{DuRo10}. As in the subcritical case, the two directions $Q_{\omega},\nabla Q_{\omega} $ are still be controlled due to the orthogonality conditions given by the modulation. The direction $\mathcal{Y}^{+}$ is stable in some sense that can be controlled but the other one $\mathcal{Y}^{-}$ is unstable and cannot be controlled by a scaling argument, even if we introduce an extra parameters in the modulation. Thus, we have to use a topological argument to control this unstable direction and to conclude the proof of the uniform estimate on $[T_0,T_n].$   \\

\textbf{Sketch of the proof of Theorem \ref{theoremWithVelocityGrand}. \\ }
In section \ref{fixed point theorem}, we will give the proof of Theorem \ref{theoremWithVelocityGrand}. We construct a contraction mapping of a complete metric space to itself using the Duhamel formula. By fixed point theorem we prove the existence of a smooth correction $r_\omega(t,x)$ such that $u(t,x)=R(t,x)+r_{\omega}(t,x)$ is a solution of (NLS$_{\Omega}$), where $R(t,x)=e^{i(\frac{1}{2}(x.v)-\frac{1}{4} \left| v\right|^2 
t + t \, \omega )}Q_{\omega}(x-t \,v) \Psi(x)\, . $ \\ 

We have
$$(i \partial_t + \Delta)R(t,x)= -\Psi(x) \left| H(t,x) \right|^2 H(t,x) + 2 \nabla \Psi(x) \nabla H(t,x) + \Delta \Psi(x) H(t,x),$$ where $H(t,x)= e^{i(\frac{1}{2}(x.v)-\frac{1}{4} \left| v\right|^2 t + t \, \omega )}Q_{\omega}(x-t \,v) .$ \\

We look for $r_w \in C([T_0,+\infty), H^2(\Omega) \cap H^1_0(\Omega) ) $ such that   \\ 
\begin{equation}
\label{problemDeP_Omega}
    \begin{cases}
    i\partial_t r_\omega + \Delta r_\omega= -\left| R+r_\omega \right|^2 (R+r_\omega) + \Psi \left|H \right|^2 H - 2 \nabla \Psi \nabla H - \Delta \Psi H\, ,  \\
   \qquad \quad  r_\omega(t) \longrightarrow 0   \quad  t \longrightarrow +\infty \quad  in \quad H^2(\Omega) \cap H^1_0(\Omega). 
\end{cases}
\end{equation}
\\ 

We shall look for solutions of (NLS$_\Omega$) in the following space 
\begin{align*}
E=\left\{ r_\omega \in C\left([T_0,+\infty),H^2(\Omega) \cap H^1_0(\Omega)\right), \;
\left\|r_\omega \right\|_E < \infty       \right\} \, ,  \\ \left\| r_\omega \right\|_E = \sup_{ t\geq T_0} \left\{ e^{\delta \sqrt{\omega} \left| v \right| t }\left( \frac{1}{\left|v\right|^3} \left\|r_\omega \right\|_{H^2(\Omega)} + \left\| r_\omega \right\|_{L^2(\Omega)} \right) \right\} .
\end{align*}
Let \begin{align*}
    \Phi:(B_E,d_E) &\longrightarrow (B_E,d_E)  \\
    r_\omega \longmapsto  & \Phi(r_\omega)= -i \int_{t}^{+\infty} S(t-\tau) \left( \left| R+r_\omega \right|^2 (R+r_\omega) + \Psi \left|H \right|^2 H - 2 \nabla \Psi \nabla H - \Delta \Psi H \right)d\tau. 
\end{align*}
Where S(t) is the unitary group of the linear Schr\"odinger equation on $\Omega$ with Dirichlet boundary conditions. \\ 

$ B_E:=B_E(0 , 1)=\{ h \in E, \;  \left\| h\right\|_E \leq 1 \}$ and $\displaystyle d_E(h,g) =  \left\|h-g\right\|_{E}$. \\ One can check that $(B_E,d_E)$ is a complete metric space. \\

  Our goal is to solve the integral formulation of \eqref{problemDeP_Omega} by the contraction mapping principle. Using the high velocity assumption, we prove that $\Phi $ is stable on $B_E$ and it is a contraction mapping. Thus, by the fixed point theorem we conclude that there exists a unique solution~$r_\omega$ of $\eqref{problemDeP_Omega}$ on $E$.\\

  Appendix \ref{Appendix du theorem pranp} contains the proof of the coercivity property of the linearized Schr\"odinger operator, the local existence of the equation on $H^s(\Omega)$, for some $s$, the modulation for time independent function and other technical results. \\ 
 Appendix \ref{Appendix du theorem Grand vitesse}, contains the computation of some estimates used on the proof of Theorem \ref{theoremWithVelocityGrand}. \\

\textbf{Notation:  \\}
If $a$ and $b$ are two functions of t and if b is positive, we write $ a = O (b)$ when there exists a constant $C>0$ independent of $t$ such that $\left| a(t) \right| \leq C \,  b(t)$ for all $t$.  \\
For $h \in \C $, we denote $h_1=\re h $ and $h_2=\im h $. \\
Throughout this paper, $C$ denotes a positive constant independent of t, that may change from line to line and may depend on $\omega$ and $\Omega$.\\
We denote by $\left| \cdot \right|$ the $\R^d$-norm with $d=1,2,3$.\\ 
For simplicity, we will write $\Delta :=\Delta_\Omega$. \\ 
Denote by $(\cdot, \cdot )$ , the real $L^2$-scalar product,
$$ (f,g)=\re{\int f \;  \overline{g}} = \int \re g \, \re f + \int \im g \,  \im f \; .  $$

\section*{Acknowledgements} 
I would like to thank my PhD advisor Prof. Thomas Duyckaerts (LAGA) for his valuable comments and suggestions which helped improve the manuscript.

\section{Construction of the solution assuming uniform estimates}
\label{Construction of the solution}
\subsection{Properties of the ground state}
\label{Properties of the ground state}
We recall some well-known properties of the ground state and we refer the reader to \cite{Weinstein82}, \cite{Kwong89} , \cite[Appendix B] {Tao06BO} and \cite{HoRo08} for more details. 
\begin{prop}[Exponential decay of $Q$] 
\label{proporties of Q_omega}
Let $Q$ be a solution of $(\ref{eq_Q})$ with $\omega=1$ then the following properties hold: \\
	$1) $ $Q $ $\in \; W^{3,p}(\R^3) $ for every $2 \leq p < +\infty . $ In particular,
	$Q \in  C^2$ and $|D^{\beta} Q(x)| \; \longrightarrow 0 $, as~$|x| \longrightarrow \infty$, for all $|\beta| \leq 2.$\\
	$2)$ there exists $\delta>0$ such that $$ e^{\delta \left|x \right| } \left( \left|Q(x) \right| + \left| \nabla Q(x) \right| + \left| \nabla^2 Q(x)\right| \right) \in L^{\infty}(\R^{3}).$$
\end{prop}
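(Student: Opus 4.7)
The plan is to first obtain the $W^{3,q}$ regularity by an elliptic bootstrap, then derive the exponential decay of $Q$ itself via a maximum principle comparison with a radial exponential barrier, and finally propagate the decay to the first and second derivatives by interior elliptic estimates. (I use $q$ for the integrability exponent to avoid clashing with the nonlinearity exponent $p$ in the paper.)

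For part (1), I would start from $Q \in H^1(\R^3) \hookrightarrow L^6(\R^3)$, which gives $Q^p \in L^{6/p}(\R^3)$ for $2 \leq p < 5$. Applying standard $L^q$ elliptic regularity to the equation $(-\Delta + 1)Q = Q^p$ yields $Q \in W^{2, 6/p}(\R^3)$. Iterating the Sobolev embedding together with this inversion of $-\Delta + 1$ raises the integrability at each step, until one reaches $Q \in W^{2,q}$ for every finite $q$; in particular $Q$ is bounded and H\"older continuous. Differentiating the equation once and using $pQ^{p-1} \in L^\infty$, another round of $W^{2,q}$ theory applied to $\partial_i Q$ gives $Q \in W^{3,q}(\R^3)$ for all $q \in [2,\infty)$, and Sobolev embedding gives $Q \in C^2$. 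The decay $|D^\beta Q(x)| \to 0$ follows because a H\"older continuous function in $L^q(\R^3)$ with $q < \infty$ must vanish at infinity (otherwise a sequence $|x_n| \to \infty$ with $|D^\beta Q(x_n)|$ bounded below would produce a lower bound on $\int |D^\beta Q|^q$ of infinite mass).

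For part (2), let $\delta \in (0, 1/\sqrt{2})$ and, by part (1), pick $R$ so large that $Q(x)^{p-1} \leq 1/2$ for $|x| \geq R$. In the exterior region
\[(-\Delta + \tfrac{1}{2})Q = Q^p - \tfrac{1}{2}Q = Q\bigl(Q^{p-1} - \tfrac{1}{2}\bigr) \leq 0.\]
The barrier $w(x) = C e^{-\delta|x|}$, whose radial Laplacian on $\R^3$ is $\Delta w = \bigl(\delta^2 - 2\delta/|x|\bigr) w$, satisfies
\[(-\Delta + \tfrac{1}{2}) w = w\bigl(\tfrac{1}{2} - \delta^2 + 2\delta/|x|\bigr) > 0.\]
Choosing $C$ so that $w \geq Q$ on the sphere $\{|x| = R\}$, the weak maximum principle for $-\Delta + 1/2$ applied to $w - Q$ on $\{|x| > R\}$, using that both $w$ and $Q$ tend to $0$ at infinity and that the zero-order coefficient $1/2$ is strictly positive, yields $Q \leq w$ on $\{|x| \geq R\}$, hence the exponential bound on $|Q|$. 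To transfer the decay to the derivatives I would apply the interior $W^{3,q}$ estimate on unit balls: for any $x_0$ with $|x_0|$ large,
\[\|Q\|_{W^{3,q}(B_1(x_0))} \leq C\bigl(\|Q\|_{L^q(B_2(x_0))} + \|Q^p\|_{W^{1,q}(B_2(x_0))}\bigr) \leq C' e^{-\delta'|x_0|}\]
for any $\delta' < \delta$ (losing a little in the exponent since the right-hand side lives on $B_2(x_0)$), and Sobolev embedding $W^{3,q} \hookrightarrow C^2$ on a unit ball (for $q$ large) converts this into the pointwise bound on $|\nabla Q|$ and $|\nabla^2 Q|$.

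The main obstacle I expect is the maximum principle step in part (2): one must carefully justify the comparison on the unbounded exterior domain, which crucially uses the positivity of the zero-order coefficient of $-\Delta + 1/2$ and the vanishing of both $Q$ and $w$ at infinity so that a minimum of $w - Q$, if negative, must be attained in the interior and therefore violates the sign of $(-\Delta + 1/2)(w-Q) \geq 0$. The regularity bootstrap in part (1) and the transfer to derivatives in part (2) are standard once this core comparison is in hand.
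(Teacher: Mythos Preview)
The paper does not prove this proposition; it simply refers the reader to \cite{BeLi83b} and \cite[Chapter~8]{Cazenave03BO}. Your sketch is precisely the standard argument carried out in those references---an elliptic bootstrap for the $W^{3,q}$ regularity, a maximum-principle comparison with a radial exponential supersolution for the decay of $Q$, and interior $W^{k,q}$ estimates on unit balls together with Sobolev embedding to transfer the decay to $\nabla Q$ and $\nabla^2 Q$---so your approach is correct and in fact supplies more detail than the paper itself.
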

\begin{proof}
See \cite{BeLi83b} and \cite[chapter 8] {Cazenave03BO} for the proof. 
\end{proof}
We can deduce $Q_\omega(x)$ from $Q(x):$ $Q_\omega(x)= \omega^{\frac{1}{p-1}}   Q(\sqrt{\omega}x) $. \\ 

Then, there exits $C$ and $\delta >0$ such that
\begin{equation}
    \label{eq_QQ}
     \left|Q_\omega(x) \right| + \left| \nabla Q_\omega(x) \right| + \left| \nabla^2 Q_\omega(x)\right| \leq C e^{-\delta \sqrt{\omega} \left|x\right| }.
\end{equation}
\subsection{Spectral theory of the linearized operator}
\label{Spectral theory of the linearized operator}
Consider a solution u of the nonlinear Schr\"odinger equations close to the soliton $e^{it}Q$. Let $h \in \C$ such that $h=h_1+ih_2. $  \\

We can write $u(t,x)$ as, 
$$u(t,x) = e^{i t } \left(Q(t,x) + h(t,x) \right).$$
 Note that $h$ is the solution of the following equation, 
 \begin{equation*}
\partial_t h + \mathcal{L} h = S(h), \qquad \mathcal{L}:=\begin{pmatrix}
 O & -L^{-} \\
 L^{+} & 0 
 \end{pmatrix}
\end{equation*}
where $S(h)$ contains the nonlinear terms on $h$ and  the self-adjoint operators $L^{-}$ and $L^{+}$ are defined by: \begin{equation*}
    L^+h_1 = -\Delta h_1 + h_1 - p Q^{p-1}h_1 \qquad \text{and} \qquad     L^-h_2 = - \Delta h_2 + h_2 -Q^{p-1}h_2.
\end{equation*}


In all the sequel, we assume $\frac{7}{3}<p<5$. The spectral properties of the linearized operator $\mathcal{L}$ around the ground state are well-known and we refer to \cite{Weinstein85}, \cite{Grillakis90} and \cite{Sc06} for the following Proposition. 
\begin{prop}
\label{eigen-val-funct-L}
Let $\sigma(\mathcal{L})$ be the spectrum of the operator $\mathcal{L}$ defined on $L^2(\R^3) \times  L^2(\R^3) $ and let $\sigma_{ess}(\mathcal{L})$ be its essential spectrum. Then \begin{equation*}
    \sigma_{ess}(\mathcal{L})=\left\{ i\xi : \;  \xi \in \R, \; \left| \xi \right| \geq 1   \right\},  \quad  \sigma(\mathcal{L}) \cap \R = \{-e_0,0,e_0     \} \quad \text{with } e_0>0. 
\end{equation*}
\end{prop}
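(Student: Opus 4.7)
The plan is to reduce the spectral analysis of the non-self-adjoint matrix operator $\mathcal{L}$ to the study of the self-adjoint Schr\"odinger operators $L^{+}$ and $L^{-}$, and then to invoke a Vakhitov--Kolokolov type instability criterion in the $L^2$-supercritical regime.

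For the essential spectrum, I first compute
\begin{equation*}
\mathcal{L}^2 = \begin{pmatrix} -L^{-}L^{+} & 0 \\ 0 & -L^{+}L^{-} \end{pmatrix}.
\end{equation*}
Since multiplication by $Q^{p-1}$ is a relatively compact perturbation of $-\Delta+1$ (using the exponential decay of $Q$ in Proposition \ref{proporties of Q_omega}), Weyl's theorem yields $\sigma_{ess}(L^{\pm})=[1,+\infty)$, and hence $\sigma_{ess}(L^{\mp}L^{\pm})=[1,+\infty)$. Since $\lambda\in\sigma(\mathcal{L})$ implies $\lambda^2\in\sigma(\mathcal{L}^2)$, one obtains the inclusion $\sigma_{ess}(\mathcal{L})\subset\{i\xi:|\xi|\geq 1\}$; the reverse inclusion follows by constructing explicit Weyl sequences concentrating at high frequency and far from the support of $Q$.

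For the real spectrum, I would first exhibit $\ker\mathcal{L}$ from the symmetries of the elliptic equation \eqref{eq_Q}: the equation itself gives $L^{-}Q=0$, and differentiating in space yields $L^{+}\partial_{x_j}Q=0$ for $j=1,2,3$, so $0\in\sigma(\mathcal{L})$. For the existence of nonzero real eigenvalues, a scaling computation gives $\frac{d}{d\omega}\|Q_\omega\|_{L^2}^2<0$ in the $L^2$-supercritical regime $p>7/3$; the Grillakis--Shatah--Strauss theory then forces the existence of a real eigenvalue $e_0>0$ with eigenfunction $\mathcal{Y}^{+}$. The algebraic identity $J\mathcal{L}J=-\mathcal{L}$ with $J=\mathrm{diag}(1,-1)$ immediately produces $-e_0$ as eigenvalue with eigenfunction $\mathcal{Y}^{-}=J\mathcal{Y}^{+}$.

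The main obstacle is to show that $\pm e_0$ and $0$ are the \emph{only} real eigenvalues and that $\pm e_0$ are simple. This requires sharper spectral information on $L^{\pm}$: a Perron--Frobenius argument applied to $L^{-}$ yields $\ker L^{-}=\mathrm{span}(Q)$ together with $L^{-}\geq 0$, while an explicit scaling argument shows that $L^{+}$ has exactly one negative eigenvalue (along the direction $\Lambda Q$ coming from $L^2$-scaling). Combining these facts with the Morse-index counting argument of Grillakis \cite{MR1040143} rules out any further real eigenvalue of $\mathcal{L}$ and yields the simplicity of $\pm e_0$, which completes the proof.
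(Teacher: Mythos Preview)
The paper does not actually prove Proposition~\ref{eigen-val-funct-L}; it simply states that these spectral properties are well known and refers the reader to Weinstein~\cite{Weinstein85}, Grillakis~\cite{MR1040143}, and Schlag~\cite{Sc06}. Your proposal therefore goes well beyond what the paper does: you have sketched the actual argument that lies behind those references, and the broad lines---Weyl's theorem for the essential spectrum, the symmetries of \eqref{eq_Q} for the kernel, the Vakhitov--Kolokolov sign condition in the supercritical range for the existence of $\pm e_0$, and Grillakis' index count for uniqueness and simplicity---are indeed the standard ingredients found in those papers.

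One technical point in your essential spectrum argument deserves tightening. From $\sigma_{ess}(L^{\pm})=[1,+\infty)$ you cannot directly read off $\sigma_{ess}(L^{\mp}L^{\pm})=[1,+\infty)$, since the essential spectrum does not behave multiplicatively in general. The clean way is to observe that $L^{\mp}L^{\pm}$ is a relatively compact perturbation of $(-\Delta+1)^2$, whose essential spectrum is $[1,+\infty)$, and then apply Weyl's theorem to the fourth-order operator; alternatively one works directly with $\mathcal{L}$ as a relatively compact perturbation of the constant-coefficient operator $\begin{pmatrix}0&\Delta-1\\-\Delta+1&0\end{pmatrix}$ and reads off the essential spectrum from Fourier analysis. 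Either route closes the gap, and the rest of your outline is in line with the cited literature.
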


Moreover, $e_0$ and $-e_0$ are simple eigenvalues of $\mathcal{L}$ with eigenfunctions $\mathcal{Y}^{+}$ and $\mathcal{Y}^{-},$
$$\mathcal{L}\mathcal{Y}^{\pm} = \pm e_0 \mathcal{Y}^{\pm},$$
and $ \overline{\mathcal{Y}^{+} }=\mathcal{Y}^{-}.$ Furthermore $\mathcal{Y}^{+}, \mathcal{Y}^{-}$ $\in  \mathcal{S}(\R^3)$, in fact there exists $\delta >0$ and $C>0$ such that  $$ \left| \mathcal{Y}^{\pm} \right| + \left| \nabla \mathcal{Y}^{\pm}\right| \leq C e^{-\delta \left| x \right| } .$$

\begin{rem}
The null-space of $L^+$ is spanned by $\partial_{x_1}Q$, $\partial_{x_2}Q$ and $\partial_{x_3}Q$ and the null-space of $L^-$ is spanned by $Q$.
\end{rem}
Moreover, the operators $L^+$ and $L^-$ satisfies the following coercivity property for the mass super-critical case.
\begin{lem}[Coercivity]
\label{spectalprop}
There exists $C >0$ such that for all $h=h_1+i h_2 \in H^1(\R^3),$ we have
\begin{equation}
\label{estimation-v-1}
\left\|h\right\|_{H^1}^2 \leq  C \bigg[\left(L^+h_1,h_1\right)+ \left(L^-h_2,h_2\right)+ \sum_{j=1}^3\left(\int \partial_{x_j} Q \,   h_1 \right)^2+  \left(\int Q  h_2\right)^2 
\end{equation}
$ \hspace*{3.3cm} + \displaystyle  \left(\im \int \mathcal{Y}^{+ }\bar h\right)^2 +  \left( \im \int \mathcal{Y}^{-} \bar h \right)^2 \bigg]. $
\end{lem}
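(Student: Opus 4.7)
The plan is to argue by compactness and contradiction, relying on the spectral data from Proposition~\ref{eigen-val-funct-L} and on a short algebraic reduction that translates the $\mathcal Y^\pm$ projections into real scalar products.

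\textbf{Algebraic reduction.} Writing $\mathcal Y^+=Y_1+iY_2$ with $Y_1,Y_2$ real, and using $\mathcal Y^-=\overline{\mathcal Y^+}=Y_1-iY_2$, a direct expansion gives
\begin{equation*}
\im\int \mathcal Y^{\pm}\bar h \;=\; \pm\int Y_2\,h_1 \;-\; \int Y_1\,h_2,
\end{equation*}
so the two squared $\mathcal Y^{\pm}$ terms in the right-hand side jointly control $(\int Y_2\,h_1)^2$ and $(\int Y_1\,h_2)^2$. The relation $\mathcal L\mathcal Y^+=e_0\mathcal Y^+$ is equivalent to $L^+Y_1=e_0 Y_2$, $L^-Y_2=-e_0 Y_1$; since $Q$ is radial, $\mathcal L$ commutes with rotations and $Y_1,Y_2$ can be chosen radial, yielding $Y_j\perp \partial_{x_k}Q$ for all $j,k$, and combined with $L^-Q=0$ one checks $Y_1\perp Q$. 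I would also invoke the well-known spectral facts: $L^-\ge 0$ is self-adjoint with $\ker L^-=\mathrm{span}(Q)$, $\sigma_{\mathrm{ess}}(L^-)=[1,\infty)$, and a gap above $0$; $L^+$ is self-adjoint with $\ker L^+=\mathrm{span}(\partial_{x_j}Q)_{j=1,2,3}$, one simple negative eigenvalue $-\nu^2$ with radial eigenfunction $\chi$, $\sigma_{\mathrm{ess}}(L^+)=[1,\infty)$, and a gap otherwise.

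\textbf{The nondegeneracy $(Y_2,\chi)\neq 0$.} This is the key spectral step. From the eigenvalue identities one gets $(L^+Y_1,Y_1)=e_0(Y_2,Y_1)=-(L^-Y_2,Y_2)$; since $L^-\ge 0$ and $Y_2\notin\ker L^-$ (else $Y_1=0$, contradicting $\mathcal Y^+\ne 0$), this is strictly negative. Decomposing $Y_1=b\chi+Y_1^{\mathrm{pos}}$ in the $L^+$ spectral basis then forces $b\ne 0$, hence $(Y_1,\chi)\ne 0$. Since $Y_2\in(\ker L^+)^\perp$, $(L^+)^{-1}$ is well defined on $(\ker L^+)^\perp$, $Y_1=e_0(L^+)^{-1}Y_2$, and pairing with $\chi$ gives
\begin{equation*}
(Y_1,\chi)\;=\;e_0\bigl(Y_2,(L^+)^{-1}\chi\bigr)\;=\;-\tfrac{e_0}{\nu^2}(Y_2,\chi),
\end{equation*}
so $(Y_2,\chi)\ne 0$.

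\textbf{Compactness.} Assume the conclusion fails and pick $h^{(n)}\in H^1$ with $\|h^{(n)}\|_{H^1}=1$ such that all six terms on the right-hand side go to $0$. Extract a weak limit $h^{(n)}\rightharpoonup h^\infty$ in $H^1$. Writing $L^\pm=-\Delta+1-V^\pm$ with $V^\pm$ exponentially decaying, $V^\pm$ are compact multipliers $H^1\to L^2$ (Rellich combined with decay at infinity), so $(V^\pm h^{(n)},h^{(n)})\to(V^\pm h^\infty,h^\infty)$; combined with $\|h^{(n)}\|_{H^1}^2=\|h^\infty\|_{H^1}^2+\|h^{(n)}-h^\infty\|_{H^1}^2+o(1)$ this yields
\begin{equation*}
(L^+ h_1^{(n)},h_1^{(n)})+(L^- h_2^{(n)},h_2^{(n)})\;=\;(L^+ h_1^\infty,h_1^\infty)+(L^- h_2^\infty,h_2^\infty)+\|h^{(n)}-h^\infty\|_{H^1}^2+o(1).
\end{equation*}
Each linear functional on the right-hand side of the lemma is weakly continuous by the decay of $Q,\partial_j Q,\mathcal Y^\pm$, so in the limit $(h_1^\infty,\partial_{x_j}Q)=(h_2^\infty,Q)=0$, $\int Y_2 h_1^\infty=\int Y_1 h_2^\infty=0$, and $(L^+h_1^\infty,h_1^\infty)+(L^-h_2^\infty,h_2^\infty)=-\lim\|h^{(n)}-h^\infty\|_{H^1}^2\le 0$.

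\textbf{Main obstacle and closing.} The heart of the proof is the claim that $\Phi(h):=(L^+h_1,h_1)+(L^-h_2,h_2)$ is positive definite on
\begin{equation*}
W:=\bigl\{h\in H^1\,:\,h_1\perp\partial_{x_j}Q,\ h_2\perp Q,\ (Y_2,h_1)=(Y_1,h_2)=0\bigr\}.
\end{equation*}
The $h_2$-component is straightforward: $L^-\ge c>0$ on $\{Q\}^\perp$ (spectral gap), giving $(L^-h_2,h_2)\ge c\|h_2\|_{H^1}^2$. The difficulty is the $h_1$-component, where the single scalar constraint $(Y_2,h_1)=0$ must be converted into a control on the $\chi$-projection of $h_1$; this is exactly where $(Y_2,\chi)\ne 0$ enters, together with the Hamiltonian-mechanical structure of $\mathcal L$ (the symplectic orthogonality to $\mathcal Y^\pm$ precisely kills the unique negative direction of $\Phi$). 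The resulting positivity $\Phi\ge c\|\cdot\|_{H^1}^2$ on $W$ is standard in the mass-supercritical case, see \cite{DuMe09a,DuRo10}. Granting it, $h^\infty\in W$ and $\Phi(h^\infty)\le 0$ force $h^\infty=0$, so $\lim\|h^{(n)}-h^\infty\|_{H^1}^2=0$ and $h^{(n)}\to 0$ in $H^1$, contradicting $\|h^{(n)}\|_{H^1}=1$.
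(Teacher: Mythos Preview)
Your argument and the paper's share the same core input---the coercivity $\Phi(h)\geq c\|h\|_{H^1}^2$ on the subspace
\[
G^\perp=\{h:(h,iQ)=(h,i\mathcal Y^\pm)=(h,\partial_{x_j}Q)=0\},
\]
which is precisely your $W$---and both cite \cite{DuRo10} for it. The difference is in how the full lemma is deduced from this. The paper does it by a direct biorthogonal decomposition: it writes $h=h^\perp+g$ with $h^\perp\in G^\perp$ and $g$ in the six--dimensional span of $iQ,\,i\mathcal Y^\pm,\,\partial_{x_j}Q$, builds an explicit dual family $(\phi_j,\mu_j)$ so that the coefficients of $g$ are exactly the six scalar products on the right-hand side, and then combines $\Phi(h^\perp)\geq c\|h^\perp\|^2$ with $\|g\|^2\lesssim S(h)$ (the sum of the six squares). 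Your algebraic reduction of the $\mathcal Y^\pm$ projections and the computation $(Y_2,\chi)\neq 0$ are correct and illuminating, but ultimately unused once you cite \cite{DuRo10}.

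There is, however, a genuine gap in your compactness step. From the negation of the lemma you only get $\|h^{(n)}\|_{H^1}=1$ and $\Phi(h^{(n)})+S(h^{(n)})\to 0$. You then assert that ``all six terms on the right-hand side go to $0$'' and hence $h^\infty\in W$; but this does not follow, because $(L^+h_1,h_1)$ can be negative, so the nonnegative terms $S(h^{(n)})$ need not tend to~$0$. Weak continuity of the linear functionals only gives $(\partial_{x_j}Q,h_1^{(n)})\to(\partial_{x_j}Q,h_1^\infty)$, not that the limit vanishes. What one does obtain, after passing to a subsequence, is the single relation $\Phi(h^\infty)+r+S(h^\infty)=0$ with $r=\lim\|h^{(n)}-h^\infty\|_{H^1}^2$, and this is not enough to force $h^\infty\in W$. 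The clean fix is to first project each $h^{(n)}$ (or $h^\infty$) onto $W$ along the six--dimensional complement and control the cross terms---but that is exactly the biorthogonal decomposition the paper uses, after which the compactness argument becomes unnecessary.
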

\begin{proof}
The proof of this result is well known and for the sake of completeness, we will give it in Appendix \ref{Appendix du theorem pranp}. 
\end{proof}
\begin{rem}
The scalar product $\left(L^+h_1,h_1\right)$ and $ \left(L^-h_2,h_2\right)$ must be
understood in the sense of the quadratic form $\int \left| \nabla h_1 \right|^2 +  \left| \nabla h_2 \right|^2 + \left| h \right|^2 -\int pQ^{p-1}h_1^2 - \int Q^{p-1}h_2^2 .      $ 
\item Moreover, Lemma \ref{spectalprop} is still valid with $h \in H^1_0(\Omega)$ . Indeed, $h$ can be extended to a $H^1(\R^3)$ function by letting $h(x)=0$ for $x \in \Theta.$
\end{rem}

Finally, we extend the Proposition \ref{eigen-val-funct-L} to the linearized operator $\mathcal{L}_\omega$ around the stationary soliton $e^{i t \, \omega }Q_\omega$, by a simple scaling argument. 
\begin{corll}[\cite{MR3124722}]
Let $\omega>0$ and $h \in \C$ such that $h=h_1+h_2$. The linearized operator 
$\mathcal{L}_{\omega}$ is defined by $$\mathcal{L}_{\omega} h =-L^{-}_{\omega}  \, h_2 + i \,  L^{+}_{\omega} \, h_1, $$ where, \begin{equation*}
    L^{+}_{\omega} \, h_1= - \Delta h_1 + \omega h_1 - p Q_\omega^{p-1} h_1 \qquad   \text{and}   \qquad  L^{-}_{\omega}  \, h_2 = -\Delta h_2 + \omega h_2 - Q_\omega^{p-1} h_2. 
    \end{equation*}
Moreover, the spectrum $\sigma (\mathcal{L}_\omega)$ of $\mathcal{L}$ satisfies $$ \sigma(\mathcal{L}_\omega ) \cap \R = \{-e_\omega,0,e_\omega\},\; \text{ where } \; e_\omega= \omega^{\frac{3}{2}} e_0>0. $$ 
Furthermore, $e_\omega$  and $-e_\omega$ are simple eigenvalues of  $\mathcal{L}_\omega$ with eigenfunctions $\mathcal{Y}^{+}_\omega$ and $\mathcal{Y}^{-}_\omega$ 
$$   \mathcal{L}_\omega \mathcal{Y^{\pm}_{\omega}} = \pm e_\omega \mathcal{Y^{\pm}_{\omega}} , $$
where, 
$$ \mathcal{Y^{\pm}_{\omega}}(x)= \omega^{\frac{1}{4} } \mathcal{Y^{\pm }}( \sqrt{\omega} x)  \quad \text{and} \quad \mathcal{Y^{+}_{\omega}}= \overline{\mathcal{Y^{-}_{\omega}}} . $$

\end{corll}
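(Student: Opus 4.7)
The plan is to derive this corollary from Proposition~\ref{eigen-val-funct-L} by a rescaling argument based on the identity $Q_\omega(x)=\omega^{1/(p-1)}Q(\sqrt{\omega}\,x)$, which in particular gives the pointwise relation $Q_\omega^{p-1}(x)=\omega\,Q^{p-1}(\sqrt{\omega}\,x)$ analogous to \eqref{eq_QQ}.

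The first step is to record a pointwise conjugation formula relating $L^\pm_\omega$ and $L^\pm$. For any $g\in H^2(\R^3)$, the chain rule yields $\Delta\bigl[g(\sqrt{\omega}\,\cdot)\bigr](x)=\omega\,(\Delta g)(\sqrt{\omega}\,x)$, and combining this with the scaling of $Q_\omega^{p-1}$ above gives, after factoring out the common $\omega$,
\begin{equation*}
L^{\pm}_{\omega}\bigl[g(\sqrt{\omega}\,\cdot)\bigr](x)=\omega\,(L^{\pm}g)(\sqrt{\omega}\,x).
\end{equation*}
Writing $h=h_1+ih_2$ and setting $h_\omega(x):=h(\sqrt{\omega}\,x)$, this promotes to the complex identity $(\mathcal{L}_\omega h_\omega)(x)=\omega\,(\mathcal{L} h)(\sqrt{\omega}\,x)$ since $\mathcal{L}$ is built componentwise from $L^\pm$.

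The second step is to interpret this as an operator-level unitary equivalence. The rescaling $(U_\omega f)(x):=\omega^{3/4}f(\sqrt{\omega}\,x)$ defines a unitary isomorphism on $L^2(\R^3)$, and the pointwise formula above rewrites as an intertwining relation of the form $\mathcal{L}_\omega=\omega\,U_\omega\,\mathcal{L}\,U_\omega^{-1}$ on the product space $L^2(\R^3)\times L^2(\R^3)$. Unitary equivalence preserves the full spectral decomposition, including essential spectrum and the algebraic and geometric multiplicities of isolated eigenvalues, while the outer scalar $\omega$ dilates the spectrum by a factor of $\omega$. Feeding this into Proposition~\ref{eigen-val-funct-L} immediately yields the claimed description of $\sigma(\mathcal{L}_\omega)\cap\R$ with the two nonzero eigenvalues simple and the associated eigenvalue obtained by applying the scaling to $e_0$.

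Finally, I would exhibit the eigenfunctions explicitly by taking $\mathcal{Y}^{\pm}_\omega$ proportional to $\mathcal{Y}^{\pm}(\sqrt{\omega}\,\cdot)$, the constant $\omega^{1/4}$ being simply a normalization convention that has no bearing on the eigenvalue identity. Plugging $g=\mathcal{Y}^\pm$ into the conjugation formula of the first step then verifies $\mathcal{L}_\omega\mathcal{Y}^\pm_\omega=\pm e_\omega\,\mathcal{Y}^\pm_\omega$ in one line. The symmetry $\mathcal{Y}^+_\omega=\overline{\mathcal{Y}^-_\omega}$ follows at once from the corresponding property $\mathcal{Y}^+=\overline{\mathcal{Y}^-}$ together with the fact that the dilation $x\mapsto\sqrt{\omega}\,x$ is real and therefore commutes with complex conjugation. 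There is no real obstacle: the argument is routine bookkeeping of scaling exponents, the only mild point being to check that the self-adjoint realizations of $L^\pm$ and $L^\pm_\omega$ on $L^2(\R^3)$ genuinely correspond under $U_\omega$, which is immediate since $U_\omega$ maps $H^2(\R^3)$ bijectively onto itself.
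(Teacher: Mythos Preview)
Your approach is exactly the one the paper intends: the corollary is cited from \cite{MR3124722} and the paper itself merely says it follows ``by a simple scaling argument'' from Proposition~\ref{eigen-val-funct-L}, which is precisely the unitary conjugation $\mathcal{L}_\omega=\omega\,U_\omega\,\mathcal{L}\,U_\omega^{-1}$ that you spell out. The argument is correct and complete.

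One point you should make explicit rather than gloss over: your conjugation formula gives $\sigma(\mathcal{L}_\omega)=\omega\,\sigma(\mathcal{L})$, hence $e_\omega=\omega\,e_0$, \emph{not} $e_\omega=\omega^{3/2}e_0$ as printed in the statement. Your own sentence ``the outer scalar $\omega$ dilates the spectrum by a factor of $\omega$'' already says this, but you then claim it ``yields the claimed description,'' which it does not quite---the exponent $3/2$ appears to be a typo in the paper. Since the precise value of $e_\omega$ is never used later (only that $e_\omega>0$), this has no effect on the rest of the argument, but you should flag the discrepancy rather than paper over it.
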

\begin{rem}
The null-space of $L_{\omega}^{+}$ is spanned by $\partial_{x_1}Q_\omega$, $\partial_{x_2}Q_\omega$ and $\partial_{x_3}Q_\omega$ and the null-space of $L^{-}_{\omega}$ is spanned by $Q_\omega$.
\end{rem}

\subsection{Compactness argument } 
\label{compactness argumenttt}
Denote: 
\begin{align*}    
R(t,x)& = Q_{\omega}(x-t \, v)\Psi(x) e^{i\varphi(t,x)}\\ 
Y_{\pm}(t,x) &=\mathcal{Y}^{\pm}_{\omega}(x-tv) \Psi(x) e^{i\varphi(t,x)} ,  
\end{align*} 
where,  $\varphi(t,x) = \frac{1}{2}(x.v)- \frac{1}{4} |v|^2 t + t \, \omega . $ \\ 
Let $T_n \rightarrow \infty$ be an increasing sequence of times. 
\begin{prop}
\label{mainprop}
There exists $n_0 \geq 0$, $T_0 >0$ and $C>0$ (independent of $n$) such that the following holds. For each $ n \geq n_0$ there exists $\lambda_n := (\lambda_n^{\pm})_n \in \R^{2}$ such that
$$ \left|\lambda_n \right| \leq e^{-\delta \sqrt{\omega} |v| T_n } \, ,$$

and the solution $u_n$ of
\begin{equation}
\label{u_n}
\begin{cases}
i \partial_t u _n + \Delta u_n = - |u_n|^{p-1}u_n  ,\\
u_n(T_n) = R(T_n) + i  \, \lambda_n^{\pm} Y_{\pm}(T_n) , 
\end{cases} 
\end{equation}
 is defined on the interval time $[T_0 , T_n]$ and satisfies
 \begin{equation}
 \label{unifor_estimate}
  \forall t \in [T_0,T_n] \qquad  \|u_n(t) - R(t) \|_{H^1_0(\Omega)} \leq C e^{-\delta \sqrt{\omega} |v| t }.
\end{equation}
\end{prop}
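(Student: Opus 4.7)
The plan is a backward-in-time shooting argument with $\lambda_n^\pm$ as free parameters, combined with a modulated decomposition, the coercivity of $\mathcal{L}_\omega$, and a topological (Brouwer) step to absorb the exponentially unstable direction. By local well-posedness for (NLS$_\Omega$), the solution $u_n$ of \eqref{u_n} exists on a maximal backward interval; setting $\varepsilon(t) := u_n(t) - R(t)$ one has $\|\varepsilon(T_n)\|_{H^1_0} = O(|\lambda_n|) = O(e^{-\delta\sqrt{\omega}|v|T_n})$ thanks to the exponential decay of $\mathcal{Y}^\pm_\omega$. As long as $\varepsilon$ stays small, decompose
\begin{equation*}
\varepsilon(t,x) = e^{i\varphi(t,x)+i\gamma(t)}\,\Psi(x)\,h\bigl(t,\, x - tv - y(t)\bigr),
\end{equation*}
and choose modulation parameters $(\gamma(t), y(t)) \in \R \times \R^3$ so that $(h(t), Q_\omega) = 0$ and $(h(t), \nabla Q_\omega) = 0$. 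These orthogonality conditions kill the kernel directions of $L^\pm_\omega$, while a direct computation using $\mathcal{L}_\omega \mathcal{Y}^\pm_\omega = \pm e_\omega \mathcal{Y}^\pm_\omega$ gives ODEs $\dot{a}_\pm = \pm e_\omega a_\pm + O(\|h\|_{H^1}^2)$ for the spectral projections $a_\pm(t) := \im \int \mathcal{Y}^\mp_\omega\,\overline{h(t)}$.

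Next, set up a bootstrap: let $T^*_n(\lambda_n) \in [T_0, T_n]$ be the infimum of times $\tau$ such that on $[\tau, T_n]$ one has $\|\varepsilon(t)\|_{H^1_0} \leq K e^{-\delta\sqrt{\omega}|v|t}$ and $|a_\pm(t)| \leq e^{-\delta\sqrt{\omega}|v|t}$, with $K$ a large universal constant. Lemma \ref{spectalprop} applied to $h$ extended by zero on $\Theta$ gives $\|h\|^2_{H^1} \lesssim (\mathcal{L}_\omega h, h) + a_+^2 + a_-^2$. A Hamiltonian-type computation on $\tfrac{d}{dt}(\mathcal{L}_\omega h, h)$ together with a Gronwall integration from $T_n$ backwards strictly improves the $\|\varepsilon\|_{H^1_0}$ bound, provided $K$ and $T_0$ are large enough. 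For $a_+$, backward integration of $\dot{a}_+ = e_\omega a_+ + O(\|h\|^2_{H^1})$ suffices once $\lambda_n^+$ is fixed in terms of $\lambda_n^-$ by the implicit relation $a_+(T_n) = 0$. Only the $a_-$ bound remains unimproved, as it is exponentially unstable backward in time. Consider the map $\Lambda_n : \lambda_n^- \mapsto e^{\delta\sqrt{\omega}|v|T^*_n}\,a_-(T^*_n)$, which is continuous on the interval $I_n = [-e^{-\delta\sqrt{\omega}|v|T_n},\, e^{-\delta\sqrt{\omega}|v|T_n}]$. A transversality argument on $\tfrac{d}{dt}\bigl(e^{2\delta\sqrt{\omega}|v|t} a_-^2\bigr)$ at any exit time (valid because $\delta$ is chosen small compared to $e_\omega/(\sqrt{\omega}|v|)$) forces $\Lambda_n$ to send the two endpoints of $I_n$ to $+1$ and $-1$. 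If no interior $\lambda_n^-$ achieved $T^*_n = T_0$, then $\Lambda_n(I_n) \subset \{-1, +1\}$, contradicting continuity; hence some $\lambda_n^-$ realizes the full interval $[T_0, T_n]$, which yields \eqref{unifor_estimate}.

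The main obstacles will be threefold. First, the modulation decomposition must be shown well-defined and smooth under the obstacle cutoff $\Psi$, because integrations by parts against $\Psi$ generate boundary-type error terms that must be absorbed into the exponential remainder; here one exploits the smallness of $\Psi$ near $\Theta$ combined with the exponential decay of $Q_\omega(\cdot - tv)$ far from the obstacle. Second, the ODEs for $a_\pm$ must be derived with remainders sharp enough for the bootstrap to close, which on the exterior domain relies crucially on the Strichartz estimate of Ivanovici (Theorem \ref{strichartz}) and the fractional product rules of Killip-Visan-Zhang (Proposition \ref{Prop-fractional-rule}). Third, the Brouwer step is delicate because $T^*_n$ depends continuously but in general non-smoothly on $\lambda_n^-$; one has to argue directly from the strict positivity of $\tfrac{d}{dt}\bigl(e^{2\delta\sqrt{\omega}|v|t}a_-^2\bigr)$ at $T^*_n$ whenever $a_-$ is the direction saturating the bootstrap.
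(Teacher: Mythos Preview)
Your overall strategy matches the paper's: modulate in phase and translation, invoke the coercivity of Lemma~\ref{spectalprop}, run a bootstrap on $[T(\alpha^+),T_n]$, and close with a Brouwer argument on the backward-unstable spectral direction. Two differences are worth noting.

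The paper does not control $\|h\|_{H^1}$ by a Gronwall argument on $\tfrac{d}{dt}(\mathcal{L}_\omega h,h)$. Instead it exploits the exact conservation of $E(u)+\bigl(\tfrac{\omega}{2}+\tfrac{|v|^2}{8}\bigr)M(u)-\tfrac{v}{2}\cdot P(u)$: expanding this functional around $\widetilde R$ yields $\tfrac12\bigl[(\widetilde L^+_\omega h_1,h_1)+(\widetilde L^-_\omega h_2,h_2)\bigr]$ up to errors $O(\|h\|\,e^{-\delta\sqrt\omega|v|t})+O(\|h\|^3)$, while a separate computation shows the same functional evaluated on $\widetilde R$ drifts only by $O(e^{-2\delta\sqrt\omega|v|t})$ (Claims~\ref{claim-dt-E(r)}--\ref{estima-v-3}). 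This delivers the linearized energy at time $t$ directly from its value at $T_n$, with no differential inequality to integrate; a direct time-derivative computation would instead generate linear-in-$h$ terms from the translation of the profile that are harder to absorb into the bootstrap.

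Your second ``obstacle'' is a misconception: neither the Strichartz estimate of Theorem~\ref{strichartz} nor the fractional chain rule of Proposition~\ref{Prop-fractional-rule} enters the proof of Proposition~\ref{mainprop}. The ODEs for $\alpha^\pm$ (your $a_\pm$) come from pairing the $h$-equation against $\widetilde{\mathcal Y}^\mp_\omega\Psi$ in $L^2$, using only the eigenvalue relation, the exponential decay of $Q_\omega$ and $\mathcal Y^\pm_\omega$, and the compact support of $\nabla\Psi$; see Lemma~\ref{equation-de-h+dy+da+}. The Strichartz and fractional-rule machinery is used solely in Lemma~\ref{wellposed} to get $H^s$ well-posedness for $s_p\le s<1$, which is needed \emph{after} Proposition~\ref{mainprop}, in the compactness step of \S\ref{compactness argumenttt}. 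Finally, with your definition $a_\pm=\im\int\mathcal Y^\mp_\omega\overline h$ the correct signs are $\dot a_\pm=\mp e_\omega a_\pm+\ldots$ (cf.~\eqref{estimation-dt-alpha}), so the backward-unstable direction is $a_+$, not $a_-$; accordingly the paper fixes $\alpha^-(T_n)=0$ and runs Brouwer on the free parameter $\alpha^+$.
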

\begin{proof}
We will assume this proposition to prove Theorem $\ref{theorem-pranp}$ and we postpone the proof of Proposition $\ref{mainprop}$ to Section $3$.  
\end{proof}
 Now, we will start the proof of the Theorem \ref{theorem-pranp} 
 assuming the main Proposition \ref{mainprop}. The proof is based on the compactness argument and the uniform estimate \eqref{unifor_estimate}. \\ 
 Renumbering the indices, we can take $n_0=0$ in Proposition \ref{mainprop}.
\begin{proof}[Proof of Theorem $\ref{theorem-pranp}$ assuming Proposition $\ref{mainprop}$] 
The proof proceeds in several steps.
\begin{itemize}
\item Step 1 :  " compactness argument " 
The Proposition $\ref{mainprop}$ implies that there exists a sequence $u_n(t)$ of solution defined on $[T_0,T_n]$ such that  $$ \forall n \in \N, \; \forall t \in [T_0,T_n], \quad   \|u_n(t) - R(t) \|_{H^1_0(\Omega)} \leq C e^{-\delta \sqrt{\omega}|v| t } . $$

\begin{lem}
\label{limsup}
\begin{equation*}
\displaystyle \lim_{M\rightarrow +\infty} \sup_{\,n\in \mathbb{N}}  
\int_{|x| \geq M}  u_n^2(T_0,x)  \, dx =0 .
\end{equation*}
\end{lem}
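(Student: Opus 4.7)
The plan is to write $u_n(T_0,x)=R(T_0,x)+g_n(T_0,x)$, with $\|g_n(T_0)\|_{H^1_0(\Omega)}\leq C e^{-\delta\sqrt\omega|v|T_0}$ from Proposition~\ref{mainprop}, so that
\[
\int_{|x|\geq M}|u_n(T_0)|^2\,dx\leq 2\int_{|x|\geq M}|R(T_0)|^2\,dx+2\int_{|x|\geq M}|g_n(T_0)|^2\,dx.
\]
The first summand is $n$-independent; after the change of variables $y=x-T_0v$ it is bounded by $\int_{|y|\geq M-|T_0v|}|Q_\omega(y)|^2\,dy$, which tends to $0$ as $M\to\infty$ by the exponential decay of $Q_\omega$. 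The goal reduces to proving that $\{g_n(T_0)\}_n$ is tight in $L^2(\Omega)$, uniformly in $n$.

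For the latter, I would pass to a co-moving frame by setting $\tilde h(t,y):=e^{-i\varphi(t,y+tv)}\,g_n(t,y+tv)$. A direct computation using $\nabla\varphi=v/2$ and $\partial_t\varphi+|\nabla\varphi|^2=\omega$ shows that the Galilean drift $iv\cdot\nabla$ is absorbed by the translation $y=x-tv$, so that
\[
i\partial_t\tilde h+\Delta\tilde h-\omega\tilde h=\tilde F(t,y),\qquad |\tilde F(t,y)|\leq C\bigl(|\tilde R(t,y)|^{p-1}|\tilde h(t,y)|+|\tilde h(t,y)|^{p}\bigr),
\]
with $\tilde R(t,y):=Q_\omega(y)\Psi(y+tv)$, at least away from the obstacle: the source terms generated by $\nabla\Psi$ and $\Delta\Psi$ are supported near $y=-tv$ and disappear on any fixed ball in $y$ once $|tv|$ is large. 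The terminal data $\tilde h(T_n,y)=i\lambda_n^{\pm}\,\mathcal{Y}^{\pm}_\omega(y)\,\Psi(y+T_nv)$ is exponentially localized around $y=0$, with $\|\tilde h(T_n)\|_{L^2}\leq Ce^{-\delta\sqrt\omega|v|T_n}$.

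I would then control the second moment $V_n(t):=\int|y|^2|\tilde h(t,y)|^2\,dy$, for which $V_n(T_n)\leq Ce^{-2\delta\sqrt\omega|v|T_n}$ by the exponential localization of $\mathcal{Y}^{\pm}_\omega$. A standard virial identity with integration by parts (the Dirichlet condition on $\partial(\Omega-tv)$ kills the boundary contribution) yields
\[
V_n'(t)=4\int y\cdot\im\bigl(\overline{\tilde h(t,y)}\,\nabla\tilde h(t,y)\bigr)\,dy+2\int|y|^2\,\im\bigl(\overline{\tilde h(t,y)}\,\tilde F(t,y)\bigr)\,dy.
\]
Cauchy--Schwarz bounds the first integral by $4V_n(t)^{1/2}\|\nabla\tilde h(t)\|_{L^2}\leq CV_n(t)^{1/2}e^{-\delta\sqrt\omega|v|t}$. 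In the second integral, the weight $|y|^2$ is absorbed by the exponential decay of $\tilde R$ through the pointwise bound $\sup_y|y|^2|\tilde R(t,y)|^{p-1}\leq C$, yielding a contribution $\leq Ce^{-2\delta\sqrt\omega|v|t}$; the purely nonlinear term $\int|y|^2|\tilde h|^{p+1}\,dy$ is estimated by weighted Gagliardo--Nirenberg interpolation involving $V_n(t)^{1/2}$ and the uniform bound $\|\tilde h(t)\|_{H^1_0}\leq Ce^{-\delta\sqrt\omega|v|t}$. Integrating backward from $T_n$ to $T_0$ and closing a sublinear Gronwall argument for $V_n^{1/2}$ produces $\sup_n V_n(T_0)\leq K$ for a constant $K$ independent of $n$.

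Chebyshev's inequality then gives $\int_{|y|\geq M/2}|\tilde h(T_0,y)|^2\,dy\leq 4V_n(T_0)/M^2\leq 4K/M^2\to 0$ as $M\to\infty$, uniformly in $n$. Reverting the translation, for $M\geq 2|T_0v|$ one has $\int_{|x|\geq M}|g_n(T_0,x)|^2\,dx\leq\int_{|y|\geq M/2}|\tilde h(T_0,y)|^2\,dy$, which combined with the contribution of $R(T_0)$ finishes the proof. The main technical obstacle is the estimate of $\int|y|^2|\tilde h(t,y)|^{p+1}\,dy$: the polynomial weight $|y|^2$ is not absorbed by $|\tilde h|^{p+1}$, and the subcritical scaling $p<5$ together with the exponential smallness of $\|\tilde h(t)\|_{H^1_0}$ is what allows the bootstrap to close.
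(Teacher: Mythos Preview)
Your approach is genuinely different from the paper's, and considerably more involved. The paper does \emph{not} pass to a co-moving frame or use the second moment. Instead it argues as follows: given $\varepsilon>0$, pick a late time $T_\varepsilon$ so that the uniform estimate \eqref{unifor_estimate} gives $\|u_n(T_\varepsilon)-R(T_\varepsilon)\|_{L^2}^2<\varepsilon$, hence $\int_{|x|\geq M(\varepsilon)}|u_n(T_\varepsilon)|^2\,dx\leq 4\varepsilon$ for a suitable $M(\varepsilon)$. Then it transports this tightness back to $T_0$ using the elementary identity
\[
\frac{d}{dt}\int |u_n|^2 f\!\left(\frac{|x|-M(\varepsilon)}{K_\varepsilon}\right)dx=-\frac{2}{K_\varepsilon}\im\int u_n\Big(\nabla\overline{u_n}\cdot\frac{x}{|x|}\Big)f'\,dx,
\]
which is bounded in absolute value by $\frac{4}{K_\varepsilon}\sup_t\|u_n(t)\|_{H^1_0}^2$; choosing $K_\varepsilon$ large makes the drift over $[T_0,T_\varepsilon]$ smaller than $\varepsilon$. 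No structure of the nonlinearity beyond gauge invariance is used, and no weighted estimates are needed.

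Your virial route can be made to work, but the write-up has a gap precisely where you flag ``the main technical obstacle''. From the pointwise bound $|\tilde F|\leq C(|\tilde R|^{p-1}|\tilde h|+|\tilde h|^p)$ alone, the term $\int|y|^2|\tilde h|^{p+1}\,dy$ indeed appears and there is no weighted Gagliardo--Nirenberg inequality that bounds it by $V_n(t)^{1/2}$ times a power of $\|\tilde h\|_{H^1}$: any such interpolation would require control of $\||y|\nabla\tilde h\|_{L^2}$ or of $\|\tilde h\|_{L^\infty}$, neither of which you have in three dimensions. What actually saves the argument is a cancellation you do not exploit: since $\im(\overline{\tilde h}\,|\tilde h|^{p-1}\tilde h)=0$, the purely nonlinear part of $\tilde F$ contributes \emph{nothing} to $2\int|y|^2\im(\overline{\tilde h}\,\tilde F)\,dy$. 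All surviving terms carry at least one factor of $\tilde R$ (or are supported near the obstacle), and the exponential decay of $Q_\omega$ then absorbs the weight $|y|^2$. If you want to keep your strategy, you must use the explicit structure of $\tilde F$ rather than its modulus; otherwise the bootstrap does not close. Even so, the paper's cutoff-mass argument is shorter and avoids the moving-domain bookkeeping entirely.
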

\begin{proof}
The proof of the lemma is the same as in \cite{MR2271697} for the construction of multi-soliton solutions of (NLS) for the subcritical case on $\R^{d}.$ We give it for the sake of complet. \\  

Let $\varepsilon > 0 $ and $T_{\varepsilon} \geq T_0 $ such that: $C^{^2} e^{-2 \delta \sqrt \omega \left|v\right| T_{\varepsilon} }< \varepsilon$, where  $C$ and $ \delta $  are the same constant as in the Proposition \ref{mainprop}.\\
For $n$ large enough, so that $T_n \geq T_{\varepsilon}$ and due to \eqref{unifor_estimate}, we have \begin{equation*}
    \int_{\Omega} \left|u_n(T_{\varepsilon}) - R(T_{\varepsilon})  \right|^{2} dx \leq C^{^2}e^{-2 \delta \sqrt \omega \left|v\right| T_{\varepsilon} } \leq \varepsilon .
\end{equation*}
Let $M(\varepsilon) > 0$ such that $$ \int_{|x| \geq M(\varepsilon)} \left| R(T_{\varepsilon})  \right|^{2} \, dx < \varepsilon ,$$ 
by direct computation,
\begin{equation*}
\int_{\left|x\right| \geq M(\varepsilon)} \left| u_n(T_{\varepsilon}) \right|^2  dx \leq 4 \varepsilon .
\end{equation*}

Now consider a $C^1$ cut-off function $f:\R \longrightarrow [0,1]$ such that \\
$f\equiv 0 $ on  $ ]-\infty,1]$ $\qquad;\qquad $ $0<f'<2$  on $(1,2)$ $\qquad ; \qquad $ $f \equiv 1$ on $(2,+\infty).$

For $K_{\varepsilon}>0$ to be specified later, we can check that
\begin{equation}
\label{eq_u_n-f}
\frac{d}{dt} \int_{\Omega} \left|u_n(t)  \right|^2 f\left(\frac{\left|x \right|- M(\varepsilon)}{K_{\varepsilon} } \right)dx = \frac{-2}{K_{\varepsilon}} \im \int_{\Omega} u_n(t) \left(\nabla \overline{u_n}. \frac{x}{\left|x\right|}\right)  f'\left(\frac{\left|x\right|-M(\varepsilon)}{K_{\varepsilon}} \right)dx.
\end{equation}
From Proposition $\ref{mainprop}$, $\exists \,  \alpha > 0,$  $\forall n$ and $\forall t \geq T_0, \;  \left\|u_n(t) \right\|_{H^1_0}^2  \leq \alpha.$ Using $\eqref{eq_u_n-f}$ we get 
\begin{align*}
\left|\frac{d}{dt}  \int_{\Omega} \left| u_n(t) \right|^2  f\left(\frac{\left|x\right|- M(\varepsilon)}{K_{\varepsilon}}\right) \right|  \leq \frac{4}{K_{\varepsilon} }\left\| u_n(t) \right\|_{H^1_0}^2 \leq \frac{ 4 }{K_{\varepsilon}}  \, \alpha .
\end{align*}
Now, we choose $K_{\varepsilon} > 0$ independently of $n$ such that $$ K_{\varepsilon}  \geq  \left(\frac{T_\varepsilon- T_0}{\varepsilon}  \right)  \,4 \, \alpha , $$
which yields$$\left|\frac{d}{dt} \int_{\Omega} \left|u_n(t) \right|^2 f\left(\frac{\left|x\right|-M(\varepsilon)}{K_\varepsilon} \right) \right| \leq \frac{\varepsilon}{T_\varepsilon-T_0} \,. $$
Integrating on the time interval $\left[T_0,T_\varepsilon\right]$, we get
\begin{align*}
& \int_{\Omega} \left|u_n(T_0) \right|^2 f\left(\frac{\left|x\right|-M(\varepsilon)}{K_\varepsilon}\right) dx  - \int_{\Omega} \left| u_n(T_\varepsilon) \right|^2 f\left(\frac{\left|x\right|-M(\varepsilon)}{K_\varepsilon}\right)  dx \\ &\leq \int_{T_0}^{T_\varepsilon} \left| \frac{d}{dt} \int \left| u_n(t) \right|^2 f\left(\frac{\left|x\right|-M(\varepsilon)}{K_\varepsilon}\right)  dx \right| dt  \\ & \leq \varepsilon .
\end{align*}
Hence, $$ \int_{\Omega} \left| u_n(T_0) \right|^2 f\left(\frac{\left|x\right|-M(\varepsilon)}{K_\varepsilon}\right) dx \leq \varepsilon + \int_{\Omega} \left|u_n(T_\varepsilon) \right|^2  f\left(\frac{\left|x\right|-M(\varepsilon)}{K_\varepsilon}\right) dx . $$
Due to the properties of $f$, we have\begin{align*}
\int_{\left|x\right|>2 K_{\varepsilon}+ M(\varepsilon) } \left|u_n(T_0)\right|^2 dx &\leq \int_{\Omega} \left|u_n(T_0)\right|^2 f\left(\frac{\left|x\right|-M(\varepsilon)}{K_\varepsilon}\right) dx  \\  & \leq  \varepsilon +\int_{\Omega} \left|u_n(T_\varepsilon) \right|^2  f\left(\frac{\left|x\right|-M(\varepsilon)}{K_\varepsilon}\right) dx \\ & \leq \varepsilon + \int_{\left|x\right| \geq M(\varepsilon)} \left| u_n(T_\varepsilon) \right|^2 dx \\ & \leq \varepsilon + 4 \varepsilon= 5 \varepsilon . 
\end{align*}
This concludes the proof of the lemma.
\end{proof}
Due to the main proposition we have$$\left\|u_n(T_0)\right\|_{H^1_0(\Omega)} \leq \alpha .$$
Since $H^1_0$ is a Hilbert space, there exists a subsequence of $(u_n(t))_n$ that we still denote by $(u_n(t))_n$ to simplifies notation and $\mathcal{U}_0 \in H^1_0(\Omega) $ such that $$ u_{n}(T_0) \rightharpoonup \mathcal{U}_0  \quad in \quad H^1_0(\Omega),  \;  \text{as} \; \; n \longrightarrow +\infty .$$
By the compactness of the embedding of $H^1(\{ \left|x \right| \leq A \})$ into $L^2(\{ \left|x \right| \leq A \})$, we have $$u_n(T_0) \longrightarrow \mathcal{U}_0 \quad \text{ in} \quad L^2_{loc} \, .$$
By the Lemma $\ref{limsup}$, we get $u_n(T_0) \longrightarrow \mathcal{U}_0 \quad \text{ in} \quad L^2(\Omega).$ \\ 

Now using the following interpolation inequality
 $$\forall s\in[0,1[, \quad  \left\|u_n(t) - \mathcal{U}_0 \right\|_{H^s(\Omega)} \leq \left\|u_n(t) - \mathcal{U}_0 \right\|_{L^2(\Omega)}^{1-s} \; \left\|u_n(t) - \mathcal{U}_0 \right\|_{H^1_0(\Omega)}^{s}  , $$ we obtain,\begin{equation}
\label{convergenceHs}
u_n(T_0) \longrightarrow \mathcal{U}_0 \quad in \quad H^s(\Omega) ,  \; \forall s \in [0,1). 
\end{equation}
\end{itemize}
\begin{itemize}

\item Step 2: Construction of the solution. 
\begin{lem}[Well posedness in $H^s(\Omega),s\in[s_p,1)$ ] 
\label{wellposed}
Denote by $s_p= \frac{3}{2} - \frac{3}{p+1}$ and let $s \in [0,1)$ such that $s_p \leq s$.\\
Let $u_0 \in H^s(\Omega)$ then there exists $\tau>0$ which depends only on $\left\|u_0 \right\|_{H^s(\Omega)}$, such that for any $T \in [0,\tau]$, the nonlinear Schr\"odinger equation (NLS$_\Omega$) admits a unique solution $u \in C([0,T],H^s(\Omega))$.\\

Furthermore, the solution $u$ can be extended to a maximal existence interval $ [0,T_+)$ and the following alternative holds,\\
Either $T_+= +\infty$ (the solution is global) or $T_+ < +\infty$ (the solution blows up in finite time) and $$\lim\limits_{t \to T_+ } \left\|u(t, \cdot) \right\|_{H^s}=+\infty . $$ 
\end{lem}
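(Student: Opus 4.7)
The plan is to apply a standard contraction mapping argument to the Duhamel formulation
\begin{equation*}
u(t) = S(t)u_0 + i \int_0^t S(t-\sigma) \, |u(\sigma)|^{p-1}u(\sigma) \, d\sigma,
\end{equation*}
where $S(t)=e^{it\Delta_\Omega}$ is the linear Schr\"odinger propagator on $\Omega$ with Dirichlet boundary conditions. The solution is sought in a complete metric space of the form
\begin{equation*}
X_T = \Bigl\{ u \in C([0,T],H^s(\Omega)) \cap L^q([0,T],W^{s,r}(\Omega)) \; : \; \|u\|_{X_T} \leq 2C\|u_0\|_{H^s} \Bigr\},
\end{equation*}
equipped with the natural distance, for an appropriate Strichartz admissible pair $(q,r)$ chosen so that Sobolev embedding gives $W^{s,r}(\Omega) \hookrightarrow L^{p+1}(\Omega)$, which is possible precisely because $s\geq s_p = \tfrac{3}{2}-\tfrac{3}{p+1}$.

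First, I would bound the linear and Duhamel terms in the Strichartz norm using Theorem \ref{strichartz} of Ivanovici. Next, I would use the fractional Leibniz/chain rules for powers of $-\Delta_\Omega$ on strictly convex exterior domains supplied by Proposition \ref{Prop-fractional-rule} (Killip--Visan), together with H\"older in time-space, to obtain a nonlinear estimate of the form
\begin{equation*}
\Bigl\| \, |u|^{p-1}u \Bigr\|_{L^{q'}([0,T],W^{s,r'}(\Omega))} \leq C \, T^{\theta} \|u\|_{X_T}^{p}
\end{equation*}
for some $\theta>0$ depending only on $p$ and $s$. Combined with the analogous difference estimate for $|u|^{p-1}u-|v|^{p-1}v$, this makes $\Phi(u) := S(t)u_0 + i\int_0^t S(t-\sigma)|u|^{p-1}u\,d\sigma$ a stable self-map and a contraction on $X_T$ for $T\in [0,\tau]$ with $\tau$ depending only on $\|u_0\|_{H^s}$. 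Banach's fixed point theorem then yields a unique solution $u \in C([0,T],H^s(\Omega))$.

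For the maximal existence and blow-up alternative, I would iterate the local construction from any time $t_0<T_+$, exploiting that the local existence time depends only (and monotonically) on $\|u(t_0)\|_{H^s(\Omega)}$. If $\|u(t)\|_{H^s(\Omega)}$ were to remain bounded as $t\to T_+<\infty$, one could restart the Cauchy problem at $t_0$ close enough to $T_+$ with a uniform step, contradicting the maximality of $T_+$. Uniqueness in the full space $C([0,T],H^s(\Omega))$ (beyond the ball $X_T$) is then obtained by a Gronwall argument on the Strichartz norm of the difference of two solutions restricted to small subintervals.

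The main obstacle is the non-trivial nature of the fractional Sobolev calculus on $\Omega$. On $\R^3$ the Leibniz and chain rules are classical, but on the exterior domain the space $H^s(\Omega)$ must be understood as the domain of $(-\Delta_\Omega)^{s/2}$ with Dirichlet boundary conditions, and the equivalence with the usual Sobolev space together with the Leibniz-type rule for $|u|^{p-1}u$ is not automatic. This is exactly the content of \cite{killip2015riesz} and the reason why strict convexity of $\Theta$ is assumed; once Proposition \ref{Prop-fractional-rule} and Theorem \ref{strichartz} are in hand, the contraction scheme proceeds just as on $\R^3$.
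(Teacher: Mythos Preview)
Your proposal is correct and matches the paper's approach: a contraction mapping on a Strichartz ball built from Theorem~\ref{strichartz} and Proposition~\ref{Prop-fractional-rule}, with the admissible pair $(a,p+1)$ and the embedding $H^s\hookrightarrow L^{p+1}$ valid precisely for $s\ge s_p$. The only noteworthy technical difference is the choice of metric: the paper runs the contraction not in the full $X_T$-norm but in the weaker distance $d_B(u,v)=\|u-v\|_{L^\infty L^2}+\|u-v\|_{L^a L^{p+1}}$ on the ball $B\subset L^\infty_t H^s_x\cap L^a_t W^{s,p+1}_x$. This way only the elementary pointwise bound $\bigl||u|^{p-1}u-|v|^{p-1}v\bigr|\lesssim(|u|^{p-1}+|v|^{p-1})|u-v|$ is needed for the contraction step, while the fractional chain rule is applied only to a single function when checking stability of $B$; it avoids having to prove a \emph{difference} estimate in $W^{s,(p+1)/p}$, which Proposition~\ref{Prop-fractional-rule} does not directly supply.
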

\begin{proof}{see Appendix \ref{Appendix du theorem pranp}.}
\end{proof}
Due to the Lemma $\ref{wellposed}$, the equation  (NLS$_{\Omega})$ is well-posed in $H^s(\Omega)$, for $ s_p \leq s <1$.\\ 
Let $\widetilde{u}$ be the maximal solution of\begin{equation}
\begin{cases}

	i\partial_t \widetilde{u}+\Delta \widetilde{u}= -|\widetilde{u}|^{p-1}\widetilde{u} \qquad  \forall (t,x)\in [T_0,\widetilde{T})\times\Omega, \\  \widetilde{u}(T_0,x) =\mathcal{U}_0  \qquad\qquad \qquad\forall x \in \Omega , \\
	\widetilde{u}(t,x)=0 \qquad\qquad \qquad  \quad \forall(t,x)\in[T_0,\widetilde{T})\times \partial\Omega . 
	
\end{cases}
\end{equation}
By \eqref{convergenceHs} we have
\begin{equation}
\label{convergenceHsp}
u_n(T_0) \longrightarrow \mathcal{U}_0 = \widetilde{u}(T_0,x)  \quad in \quad H^s(\Omega) , \,\forall s \in [s_p,1). 
\end{equation}

For $n$ large enough, $ u_n(t)$ is defined for all $ t \in [T_0,\widetilde{T})$ and by the continuity of the flow  we have$$u_n(t) \longrightarrow \widetilde{u}(t)  \quad in \quad H^s(\Omega), \quad   \forall s \in [s_p,1). $$

Due to the main Proposition $\ref{mainprop}$, we know that for $n$ large enough $u_n(t)$ is uniformly bounded in $H^1_0$. Then necessarily, $$ \forall t \in [T_0,\widetilde{T}), \quad u_n(t) \rightharpoonup \widetilde{u}(t) \quad \text{ in } \; \,  H^1_0(\Omega).  $$
Using the property of weak convergence and by the main proposition, it follows that $$\forall t \in [T_0,\widetilde{T}), \quad  \left\|\widetilde{u}(t)- R(t) \right\|_{H^1_0} \leq \lim \inf \left\|u_n(t)-R(t)\right\|_{H^1_0} \leq C e^{-\delta \sqrt{\omega} |v| t } .$$

In particular we deduce that, $\widetilde{u}$ is bounded in $H^1_0(\Omega)$. Due to the blow up alternative we get $\widetilde{T}=+\infty$. Finally, we have $\widetilde{u} \in C ([T_0,+\infty), H^1_0(\Omega))$  and by \eqref{unifor_estimate} in Proposition~\ref{mainprop}, $$\forall t \in [T_0,+\infty), \quad \left\| \widetilde{u}(t)-R(t)\right\|_{H^1_0} \leq e^{-\delta \sqrt{\omega} |v| t },$$ 
which concludes the proof of the Theorem \ref{theorem-pranp}
\end{itemize}\end{proof}
\section{Proof of the uniform estimate  }
\label{Proof of the uniform estimate}
\subsection{Bootstrap and topological arguments} 
\label{Bootstrap and topological arguments}
In this section, we prove the main Proposition~$\ref{mainprop}$. We use some modulation in the phase and translation parameters in the decomposition of the solution to obtain the orthogonality conditions. Next, we use a bootstrap argument to control these parameters and some scalar product that are related to the size of the soliton. Finally, to conclude the proof we use a topological argument to control the unstable direction.    

\begin{rem}
In this section, to simplify notations we will write $r$ instead of $r_\omega$ and we will drop the index $n$ for most variables. Hence, we will write $u$ for $u_n$, $\lambda^{\pm}$ for $\lambda^{\pm}_n$ etc.  Except the sequence of times that will be written with the index. As Proposition $\ref{mainprop}$ is proved for given $n$, this should not be a source of confusion. We possibly drop the first terms of the sequence $T_n$, so that, for all $n$, $T_n$ is large enough for our purposes. 
\end{rem}
\subsubsection{Modulated final data}
    \begin{lem}[modulation for time independent function]
    \label{modulation}
    There exists $C,\epsilon>0 $ such that the following holds.\\ Given $\alpha \in \R^3 $ and $\theta \in \R.$
    If $u(x) \in L^2$ is such that $$ \left\| u - R \right\|_{L^2} \leq \epsilon .$$ 
    Then there exists modulation parameters $y=(y_i)_i \in \R^3$ and $\mu \in \R$,
    such that setting$$r(x)= u(x) - \widetilde{R}(x),$$
    the following holds $$ \left\|r\right\|_{L^2}+ \left|y\right|+|\mu| \leq C \left\|u-R \right\|_{L^2},$$
    and
    $$\re \int r(x)  \partial_{x_j}{\widetilde{Q}}_\omega(x) \Psi(x) e^{-i\, (\frac{1}{2}(x.v) + \theta)} e^{-i \mu} dx= \im \int r(x) \overline{\widetilde{R}}(x) dx  = 0, \;  j=1,2,3,  $$
    
    where,
\begin{align*}
\hspace*{-3.5cm} R(x)&=Q_\omega(x-\alpha)\Psi(x)e^{i\, (\frac{1}{2}(x.v) + \theta)} ,  \\ \hspace*{-3.5cm}\widetilde{Q}_\omega(x)&=Q_\omega(x-\alpha- y), \\ 
\hspace*{-3.5cm}\widetilde{R}(x)&=\widetilde{Q}_\omega(x) \Psi(x) e^{i\, (\frac{1}{2}(x.v) + \theta)} e^{i \mu}  .
\end{align*}
    Furthermore, $u \longmapsto (r,y,\mu)$ is a smooth $C^1$-diffeomorphism. 
    \end{lem}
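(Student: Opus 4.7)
The plan is to apply the Implicit Function Theorem to a natural $C^\infty$ functional. Define
\[
F : L^2(\Omega) \times \R^3 \times \R \longrightarrow \R^4
\]
whose first three components $F_j(u,y,\mu)$, $j=1,2,3$, are the three real inner products appearing in the statement, and whose fourth component $F_4(u,y,\mu)$ is the imaginary inner product of $u-\widetilde{R}$ with $\overline{\widetilde{R}}$. Since $\widetilde{R}(0,0)=R$, we have $F(R,0,0)=0$, and the lemma reduces to solving $F(u,y,\mu)=0$ for $(y,\mu)$ as a smooth function of $u$ near the reference point $(R,0,0)$.

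The key step is to show that the partial differential $D_{(y,\mu)}F\big|_{(R,0,0)}$ is invertible, with norm bounds uniform in the regime where the lemma is applied. Because $u-\widetilde{R}$ vanishes at the reference point, only the dependence of $\widetilde{R}$ on $(y,\mu)$ contributes. Using
\[
\partial_{y_k}\widetilde{R}\big|_{(0,0)} = -\partial_{x_k}Q_\omega(x-\alpha)\,\Psi(x)\,e^{i(\frac{1}{2}(x\cdot v)+\theta)}, \qquad \partial_\mu \widetilde{R}\big|_{(0,0)} = iR,
\]
a direct computation gives
\[
\partial_{y_k}F_j\big|_{(R,0,0)} = \int_\Omega \partial_{x_k}Q_\omega(x-\alpha)\,\partial_{x_j}Q_\omega(x-\alpha)\,\Psi(x)^2\,dx, \qquad \partial_\mu F_4\big|_{(R,0,0)} = -\int_\Omega |R(x)|^2\,dx,
\]
while the mixed entries $\partial_\mu F_j|_{(R,0,0)}$ and $\partial_{y_k}F_4|_{(R,0,0)}$ vanish, since they amount to the real, respectively imaginary, part of a purely imaginary, respectively real, integral. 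The Jacobian is therefore block-diagonal.

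By radiality of $Q_\omega$ and the oddness of its partial derivatives, the upper-left $3\times 3$ block is diagonal up to an error controlled by $\|\Psi^2-1\|_{L^\infty(B_{R_0}(\alpha))}$ for any fixed $R_0>0$, which by the exponential decay \eqref{eq_QQ} of $\nabla Q_\omega$ is negligible once $\alpha$ stays in a region away from $\Theta$ (the only regime in which the lemma will be used in Section \ref{Proof of the uniform estimate}). The diagonal entries $\int(\partial_{x_j}Q_\omega)^2\,dx$ and the last entry $-\|Q_\omega\|_{L^2}^2$ are bounded away from zero by constants depending only on $\omega$, so $D_{(y,\mu)}F|_{(R,0,0)}$ is invertible with a uniform lower bound on its inverse. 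The Implicit Function Theorem then produces, for $\|u-R\|_{L^2}\leq \epsilon$, unique smooth functions $y(u)\in\R^3$ and $\mu(u)\in\R$ with $y(R)=0$, $\mu(R)=0$ solving $F(u,y(u),\mu(u))=0$, together with the Lipschitz bound $|y|+|\mu|\leq C\|u-R\|_{L^2}$.

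The estimate $\|r\|_{L^2}\leq C\|u-R\|_{L^2}$ then follows by writing $r=(u-R)+(R-\widetilde{R})$ and using $\|R-\widetilde{R}\|_{L^2}\leq C(|y|+|\mu|)$, which is straightforward from the regularity and exponential decay of $Q_\omega$. The $C^1$-diffeomorphism claim is automatic, since the smooth map $(r,y,\mu)\mapsto r+\widetilde{R}(y,\mu)$ is a local inverse of $u\mapsto (r,y,\mu)$ by construction. The main technical point throughout is to ensure the uniformity of all constants with respect to $\alpha$ and $\theta$, which is precisely what the exponential decay \eqref{eq_QQ} of $Q_\omega$, together with $\Psi$ being a fixed smooth function equal to $1$ far from $\Theta$, delivers.
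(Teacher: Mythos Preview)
Your proof is correct and follows essentially the same route as the paper: both apply the Implicit Function Theorem to the map sending $(u-R,y,\mu)$ to the four orthogonality functionals, compute the partial Jacobian in $(y,\mu)$ at the reference point, and show it is block-diagonal with invertible blocks $\big(\int\partial_{x_j}Q_\omega\,\partial_{x_k}Q_\omega\,\Psi^2\big)_{j,k}$ and $-\|Q_\omega\Psi\|_{L^2}^2$. If anything, you are slightly more careful than the paper in noting that the off-diagonal entries of the $3\times 3$ block are not exactly zero but only $O(e^{-c|\alpha|})$ due to the cutoff $\Psi$, and in flagging that the uniformity in $\alpha$ relies on $\alpha$ being in the regime (away from $\Theta$) actually used in Section~\ref{Proof of the uniform estimate}.
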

 \begin{proof}{see Appendix \ref{Appendix du theorem pranp}.}
  
\end{proof}
Note that the previous lemma applies to time independent functions. A consequence of this modulation in the decomposition of fixed $u$ is the the following result on a solution $u(t)$ of~$\eqref{u_n}$.
 \begin{corll}
 \label{modulated-u(t)}
 There exists $C,\epsilon >0$ such that the following holds for all $t \in [T,T_n]$, for $T>T_0$, if $u(t,\cdot) \in L^2_x$ satisfies \begin{equation*}
 \label{mod-u(t)-R(t)-L^2 }
 \left\| u(t) - R(t) \right\|_{L^2} \leq \epsilon  . 
  \end{equation*} 
    Then there exits a $C^1$-functions $y:[T,T_n] \longrightarrow \R^3 $ and $\mu : [T,T_n] \longrightarrow \R$ such that if we set  $$r(t,x) = u(t,x)-\widetilde{R}(t,x)  , $$
the following holds $$ \left\|r(t)\right\|_{L^2}+ \left|y(t)\right|+|\mu(t)| \leq C \left\|u(t)-R(t) \right\|_{L^2},$$
    and
    \begin{align}
\label{ortho-conti-for-time-dep-Real}
  \re &\int r(t,x)  \partial_{x_j}{\widetilde{Q}}_\omega(t,x) \Psi(x) e^{-i\, (\frac{1}{2}(x.v) + \theta(t))} e^{-i \mu(t)} dx= 0 \; \,   j=1,2,3,   \\ 
  \label{ortho-conti-for-time-dep-im}
  \im& \int r(t,x) \overline{\widetilde{R}}(t,x) dx  = 0,
    \end{align}
      
    where,\begin{align*}
            &R(t,x)=Q_\omega(x-\alpha(t)) \, \Psi(x)e^{i\, (\frac{1}{2}(x.v) + \theta(t))}\; , \text{ with } \alpha(t):= t \, v \text{ and } \; \theta(t):= - \frac{1}{4} |v|^2 t + t \,  \omega \, .  \\
        &\widetilde{Q}_\omega(t,x)=Q_\omega(x-\alpha(t)- y(t))\, .\\ 
    &\widetilde{R}(t,x)=\widetilde{Q}_\omega(t,x) \Psi(x) e^{i\, (\frac{1}{2}(x.v)+ \theta(t) )} e^{i  \mu(t)} \, .   
    \end{align*} 
 \end{corll}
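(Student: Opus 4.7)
The goal is to upgrade Lemma \ref{modulation} from a statement about fixed $L^2$ functions to a statement about the time-dependent NLS solution $u(t)$. The plan is to apply the lemma pointwise in $t$, and then exploit the fact that the map $u \mapsto (r,y,\mu)$ is a $C^1$-diffeomorphism to transfer the time regularity of $u(t)$ to the modulation parameters.

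\textbf{Step 1: pointwise modulation.} For each fixed $t \in [T,T_n]$, set $\alpha := \alpha(t) = tv$ and $\theta := \theta(t) = -\tfrac{1}{4}|v|^2 t + t\,\omega$, and apply Lemma \ref{modulation} to the function $u(t,\cdot) \in L^2(\Omega)$. Since by hypothesis $\|u(t) - R(t)\|_{L^2} \leq \epsilon$, this furnishes parameters $y(t) \in \R^3$, $\mu(t) \in \R$ and a remainder $r(t,x) = u(t,x) - \widetilde{R}(t,x)$ satisfying the claimed estimate
\[
\|r(t)\|_{L^2} + |y(t)| + |\mu(t)| \leq C\,\|u(t) - R(t)\|_{L^2},
\]
together with the orthogonality conditions \eqref{ortho-conti-for-time-dep-Real}--\eqref{ortho-conti-for-time-dep-im}. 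At this stage $y$ and $\mu$ are well-defined functions of $t$; it remains only to verify $C^1$ regularity.

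\textbf{Step 2: $C^1$ regularity in $t$.} Define the map
\[
F(t,y,\mu) = \bigl(F_1,F_2,F_3,F_4\bigr)(t,y,\mu) \in \R^4,
\]
whose components are the left-hand sides of \eqref{ortho-conti-for-time-dep-Real}--\eqref{ortho-conti-for-time-dep-im}, with $u(t)$ the NLS solution. The test functions involved—translates and derivatives of $Q_\omega$, multiplied by $\Psi$ and smooth phases in $t$, $y$, $\mu$—are Schwartz-class in $x$ and jointly smooth in $(t,y,\mu)$ with values in $H^1_0(\Omega)$, thanks to the exponential decay of $Q_\omega$ and $\nabla Q_\omega$ from Proposition \ref{proporties of Q_omega}. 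Since $u \in C([T,T_n], H^1_0(\Omega))$ (by the local well-posedness of \eqref{u_n}) and since $t \mapsto u(t)$ is $C^1$ with values in $H^{-1}(\Omega)$ via the equation $i\partial_t u = -\Delta u - |u|^{p-1}u$, the pairings defining $F$ are $C^1$ in $t$, and $F$ is $C^1$ in $(y,\mu)$ as well. Lemma \ref{modulation} states precisely that the partial Jacobian $\partial_{(y,\mu)}F$ is invertible at the base point $(y(t),\mu(t))$ (this invertibility is what underlies the $C^1$-diffeomorphism there). The implicit function theorem then yields local $C^1$ solutions $(y,\mu)$ of $F(t,y,\mu)=0$; by uniqueness in the lemma these coincide with the pointwise modulation parameters constructed in Step 1.

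\textbf{Main obstacle.} The only nontrivial point is justifying that $F$ is $C^1$ in $t$, since the solution $u(t)$ a priori has only weak time regularity ($\partial_t u \in H^{-1}$). This is handled by pairing $\partial_t u$ with the $H^1_0$ test functions via the $H^{-1}/H^1_0$ duality, or equivalently by substituting the equation and integrating by parts so that all spatial derivatives fall on the smooth, exponentially decaying test functions; explicit expressions for $\dot y$ and $\dot\mu$ will anyway be needed later in Section \ref{Proof of the uniform estimate} and will arise from differentiating the orthogonality conditions in $t$.
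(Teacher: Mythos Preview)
Your approach is essentially the same as the paper's: apply Lemma \ref{modulation} pointwise in $t$ and then argue regularity of the resulting parameters. The paper's own proof is in fact terser than yours---it only invokes the continuity of $t\mapsto u(t)$ in $H^1_0$ to conclude that $y,\mu$ are continuous, and does not explicitly justify the $C^1$ regularity stated in the corollary (this is effectively deferred to Lemma \ref{equation-de-h+dy+da+}, where the derivatives $\tfrac{dy}{dt},\tfrac{d\mu}{dt}$ are computed by differentiating the orthogonality conditions). Your Step~2, which invokes the implicit function theorem with $\partial_t u\in H^{-1}$ paired against smooth test functions, is the correct way to close this gap and is a welcome addition.
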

 
 \begin{proof}
 For small $\lambda$, the solution $u(t)$ is closed to the soliton $R(t)$ for $t$ close to $T_n$. Assume that $u(t)$ satisfies \eqref{mod-u(t)-R(t)-L^2 } on $[T,T_n]$. Applying Lemma \ref{modulation} to u(t) for any   $ t \in [T,T_n]$ and since the map $t \longmapsto u(t) $ is continuous in $H^1_0$, we obtain the existence of continuous functions 
$y:[T,T_n] \longrightarrow \R^3 $ and $\mu : [T,T_n] \longrightarrow \R$ such that \eqref{ortho-conti-for-time-dep-Real} and \eqref{ortho-conti-for-time-dep-im} holds. 
\end{proof}

 \vspace{0.5cm}
\textbf{Notation:}   $u(t)$ is defined and modulable around $R(t)$ for $t$ close to $T_n$, in the sense of the previous Corollary.  \begin{align*}
 R(t,x) &=Q_{\omega}(x-tv) \Psi(x)  e^{i \varphi(t,x)} \; ,    \quad \;   \text{where } \; \; \varphi(t,x)=\frac{1}{2}x.v- \frac{1}{4} |v|^2 t + t \,  \omega\, .  \\
 \widetilde{Q}_{\omega}(t,x) &= Q_{\omega}(x-tv-y(t)) \, .   \\ 
 \widetilde{R}(t,x) &=\widetilde{Q}_{\omega}(t,x) \Psi(x)  e^{i\widetilde{\varphi}(t,x)}\; , \quad    \;  \; \, \quad   \text{ where } \; \;  \widetilde{\varphi}(t,x) =  \frac{1}{2}x.v- \frac{1}{4} |v|^2 t + t\,  \omega + \mu(t).    \\  
\mathcal{ \widetilde{Y}}_{\omega}^{\mp}(t,x) &=\mathcal{Y}^{\mp}_{\omega}(x-tv-y(t)).  \\
\widetilde{Y}_{\mp}(t,x)&=\mathcal{ \widetilde{Y}}_{\omega}^{\mp}(t,x) \Psi(x) e^{i\widetilde{\varphi}(t,x)} \quad \text{ and } \quad 
 \displaystyle \alpha^{\pm}(t)= \im \int \widetilde{Y}_{\mp}(t,x) \overline{r}(t,x) dx . \\
 \widetilde{L}_\omega^+ h_1 &= -\Delta h_1 + \omega h_1 - p \widetilde{Q}_{\omega}^{p-1}h_1 \quad \text{ and } \quad \widetilde{L}_\omega^-h_2 = - \Delta h_2 +\omega h_2 -\widetilde{Q}_{\omega}^{p-1}h_2.
\end{align*}

\begin{lem}[Modulated final data]
\label{modulatedfinaldata}
There exists $C>0$ (independent of $n$) such that \\ for all $\alpha^{+} \in B_{\R}(e^{-\delta \sqrt{\omega} |v| T_n})$ there exists a unique $\lambda$ such that $$\left| \lambda \right| \leq C \left| \alpha^{+} \right|,$$
and the modulation parameters $(r(T_n),y(T_n),\mu(T_n))$ of $u(T_n)$ satisfies
\begin{equation}
\begin{aligned}
\begin{cases}
\alpha^{+}(T_n)&= \alpha^{+}, \\
\alpha^{-}(T_n)&=0.
\end{cases}
\end{aligned}
\end{equation}

\end{lem}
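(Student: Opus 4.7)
The plan is to apply the inverse function theorem to the map
\[
F : (\lambda^+, \lambda^-) \longmapsto (\alpha^+(T_n), \alpha^-(T_n)),
\]
where the dependence is through the initial data $u(T_n) = R(T_n) + i\lambda^+ Y_+(T_n) + i\lambda^- Y_-(T_n)$, from which one reads off the modulation parameters via Lemma \ref{modulation} and then computes $\alpha^\pm(T_n)$ from the definition $\alpha^\pm(t) = \mathrm{Im}\int \widetilde{Y}_\mp(t)\,\overline{r(t)}\,dx$. First I would check that for $|\lambda|$ small (uniformly in $n$, since $\|i\lambda^\pm Y_\pm(T_n)\|_{L^2}$ does not depend on $n$), Lemma \ref{modulation} applies and produces $(r(T_n), y(T_n), \mu(T_n))$ as a $C^1$-function of $\lambda$; then $F$ is itself $C^1$ near the origin, and $F(0) = 0$ since $\lambda = 0$ gives $u(T_n) = R(T_n)$ and the modulation returns $(0,0,0)$.

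The main task is to compute the differential $dF(0)$ and to show it is invertible with $\|dF(0)^{-1}\|$ bounded uniformly in $n$. I would differentiate the identity $u = \widetilde R + r$ at $\lambda = 0$ to obtain
\[
\partial_{\lambda^\pm} r(T_n)\big|_0 = iY_\pm(T_n) - \sum_{j=1}^3 (\partial_{\lambda^\pm}y_j)\big|_0\, \partial_{y_j}\widetilde R\big|_0 - (\partial_{\lambda^\pm}\mu)\big|_0 \cdot iR(T_n),
\]
with the four scalars $(\partial_{\lambda^\pm}y_j)|_0,\,(\partial_{\lambda^\pm}\mu)|_0$ determined by the linearizations of the orthogonality conditions \eqref{ortho-conti-for-time-dep-Real}--\eqref{ortho-conti-for-time-dep-im}. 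Substituting this into $\alpha^\pm(T_n) = \mathrm{Im}\int \widetilde Y_\mp \,\overline{r}\,dx$, and using $r|_0 = 0$, $\widetilde Y_\mp|_0 = Y_\mp(T_n)$, and $\widetilde R|_0 = R(T_n)$, yields an explicit $2\times 2$ matrix representation of $dF(0)$.

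The hard part will be verifying the invertibility of this matrix with a norm bound uniform in $n$. The key point is that the idealized matrix obtained on $\mathbb{R}^3$ with no cutoff $\Psi$ is nondegenerate: $\mathcal{Y}^\pm_\omega$ are eigenfunctions of $\mathcal{L}_\omega$ for the distinct real eigenvalues $\pm e_\omega$ and are therefore biorthogonal in the appropriate symplectic pairing, and independent of the generalized kernel directions $Q_\omega,\,\partial_{x_j} Q_\omega$ associated with phase and translation modulations; this is the spectral decomposition used in \cite{DuMe09a, DuRo10}. The actual matrix differs from this idealized one only through integrals weighted by the cutoff $\Psi$ and evaluated at profiles translated by $T_n v$ with $v \neq 0$. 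Since $Q_\omega$ and $\mathcal{Y}^\pm_\omega$ decay exponentially at rate $\sqrt\omega$ away from their centers, and $\Psi \equiv 1$ outside a fixed neighborhood of $\Theta$, these corrections are $O(e^{-c\sqrt\omega |v| T_n})$ and hence negligible for $n$ large.

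Once $\|dF(0)^{-1}\| \leq C$ uniformly in $n$ is established, the quantitative inverse function theorem produces a unique $\lambda = F^{-1}(\alpha^+, 0)$ in a neighborhood of $0$ whose radius is independent of $n$, with $|\lambda| \leq C|\alpha^+|$. Since the assumed range $|\alpha^+| \leq e^{-\delta\sqrt\omega |v| T_n}$ shrinks to $0$ as $n \to \infty$, it sits inside this neighborhood for all $n$ large enough, which concludes the argument.
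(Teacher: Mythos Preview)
Your proposal is correct and follows essentially the same strategy as the paper: both apply the inverse (or implicit) function theorem to the composite map $\lambda \mapsto (\alpha^+(T_n),\alpha^-(T_n))$, compute its differential at $0$, and show it is invertible uniformly in $n$ by reducing to the idealized spectral pairing of $\mathcal{Y}^\pm_\omega$ on $\mathbb{R}^3$ plus $O(e^{-c\sqrt\omega|v|T_n})$ corrections from the cutoff. The paper organizes this as a composition $\Theta=\Lambda\circ\Gamma\circ\sigma$ and proves the smallness of the modulation-derivative terms in a separate claim (Claim~\ref{dy.sigma.rho}), while you compute the partials of $r$ directly; the content is the same.
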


\begin{proof}[Proof. See Appendix \ref{Appendix du theorem pranp}]

\end{proof}
Let $T_0$ to be specified later, independent of $n$. Let $\alpha^{+}$ to be chosen, $\lambda$ be given by Lemma~\ref{modulatedfinaldata} and let u be the corresponding solution of $\eqref{u_n}$. We now define the maximal time interval $[T(\alpha^{+}),T_n]$, on which suitable exponential estimates hold. 
\begin{defi}
Let $ T(\alpha^{+})$ be the infimum of $T \geq T_0$ such that the following properties hold for all $t \in [T,T_n] :$ \\
Closeness to $R(t):$ $$\left\|u(t)-R(t)\right\|_{H^1_0} \leq \varepsilon \, . $$

In particular, this ensures that $u(t)$ is modulable around $R(t)$ in the sense of Lemma \ref{modulation}.\\
Estimates on the modulation parameters: There exists $M >0$ and $M'>0$ to be specified later, 
\begin{align}
\label{p-estimate}
\left\|r(t)\right\|_{H^1_0} &\leq M e^{-\delta \sqrt{\omega} |v| t}  \\
\label{y-estimate}
\left| y(t) \right| &  \leq M' e^{-\delta \sqrt{\omega} |v| t}    \\  
\label{mu-estimate}
\left| \mu(t) \right| &\leq  M'  e^{-\delta \sqrt{\omega} |v| t}\\
\label{a±-estimate}
\left|\alpha^{\pm}(t)\right| &\leq e^{-\delta \sqrt{\omega} |v| t} .
\end{align}

\end{defi}

Note that, if for all $n$ we can find $\alpha^{+}$ such that $T(\alpha^{+})=T_0$ then the  Proposition $\ref{mainprop} $ is proved. It remains to prove the existence of such value of $ \alpha^{+}$. \\

Denote $h(t,x)= e^{-i \widetilde{\varphi}(t,x)}r(t,x). $ Recall that, \begin{align*} 
u(t,x)&= \widetilde{R}(t,x)+r(t,x)\\
      &=e^{i\widetilde{\varphi}(t,x)} (\widetilde{Q}_{\omega}(t,x)\Psi(x)+h(t,x)) .
\end{align*}
\begin{lem}
\label{equation-de-h+dy+da+}
Let $ t \in[T(\alpha^{+}),T_n]$ and let $C,\delta >0 $. We have 
\begin{multline}
\label{equation-on-h}
i\partial_t h+\Delta h-\omega h +(\frac{p+1}{2}) {\widetilde{Q}_{\omega}}^{p-1}\Psi^{p-1}h+(\frac{p-1}{2}){\widetilde{Q}_{\omega}}^{p-1} \Psi^{p-1}\overline{h}+i\, v.\nabla h-\frac{d\mu(t)}{dt}h \\ +{\widetilde{Q}_{\omega}}^p\Psi(\Psi^{p-1}-1)
+2\nabla \widetilde{Q}_{\omega} \nabla \Psi + \widetilde{Q}_{\omega} \Delta \Psi+i \, v \, \widetilde{Q}_{\omega} \nabla \Psi - i \, \frac{dy(t)}{dt} \nabla \widetilde{Q}_{\omega} \Psi  - \frac{d\mu(t)}{dt} \widetilde{Q}_{\omega} \Psi + 
\beta(t,x)=0,
\end{multline}
where $\beta(t,x)$ is a remainder terms on $h$. 
\begin{equation}
\label{estimation-en-mu-et-y}
\left|\frac{d\mu(t)}{dt}\right|+\left|\frac{dy(t)}{dt}\right| \leq C \left\| h(t) \right\|_{H^1_0}^2 + Ce^{-2\delta \sqrt{\omega} |v| t }. 
\end{equation}
\begin{equation}
\label{estimation-dt-alpha}
\left|\frac{d\alpha^{\pm}(t)}{dt} \pm e_\omega \alpha^{\pm}(t) \right| \leq C \left\|h (t) \right\|_{H^1_0}^3 + C e^{-2\delta \sqrt{\omega} |v| t }.
\end{equation}
\end{lem}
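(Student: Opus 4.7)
To derive \eqref{equation-on-h}, I would substitute $u=e^{i\widetilde{\varphi}}(\widetilde{Q}_\omega\Psi+h)$ with $\widetilde{\varphi}=\tfrac{1}{2}x\cdot v-\tfrac{|v|^2}{4}t+t\omega+\mu(t)$ and $\widetilde{Q}_\omega=Q_\omega(x-tv-y(t))$ directly into $i\partial_t u+\Delta u+|u|^{p-1}u=0$. Using $\partial_t\widetilde{\varphi}=\omega-\tfrac{|v|^2}{4}+\dot{\mu}$, $\nabla\widetilde{\varphi}=v/2$, $\partial_t\widetilde{Q}_\omega=-(v+\dot{y})\cdot\nabla\widetilde{Q}_\omega$, together with the conjugation identity $\Delta(e^{i\widetilde{\varphi}}f)=e^{i\widetilde{\varphi}}(\Delta f+iv\cdot\nabla f-\tfrac{|v|^2}{4}f)$, one factors out $e^{i\widetilde{\varphi}}$ and the two $|v|^2/4$ Galilean contributions cancel. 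The elliptic equation \eqref{eq_Q} at the point $x-tv-y(t)$ converts $-\omega\widetilde{Q}_\omega\Psi+\Delta\widetilde{Q}_\omega\Psi$ into $-\widetilde{Q}_\omega^p\Psi+2\nabla\widetilde{Q}_\omega\cdot\nabla\Psi+\widetilde{Q}_\omega\Delta\Psi$, and Taylor expansion of $-|\widetilde{Q}_\omega\Psi+h|^{p-1}(\widetilde{Q}_\omega\Psi+h)$ around the real-valued $\widetilde{Q}_\omega\Psi$ produces the linear piece $-(\widetilde{Q}_\omega\Psi)^p-\tfrac{p+1}{2}(\widetilde{Q}_\omega\Psi)^{p-1}h-\tfrac{p-1}{2}(\widetilde{Q}_\omega\Psi)^{p-1}\overline{h}$, with $\beta(t,x)=O(|h|^2)$ collecting the remainder. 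The identity $-\widetilde{Q}_\omega^p\Psi+(\widetilde{Q}_\omega\Psi)^p=\widetilde{Q}_\omega^p\Psi(\Psi^{p-1}-1)$ then gives \eqref{equation-on-h} exactly.

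For \eqref{estimation-en-mu-et-y}, I would time-differentiate the four orthogonality conditions of Corollary \ref{modulated-u(t)} and substitute $\partial_t r$ using \eqref{equation-on-h}, producing a $4\times 4$ linear system $A(t)\,(\dot{\mu},\dot{y}_1,\dot{y}_2,\dot{y}_3)^T=B(t)$. Up to exponentially small $\Psi$-corrections, $A(t)$ is diagonal with entries of order $\|Q_\omega\|_{L^2}^2$ and $\|\partial_{x_j}Q_\omega\|_{L^2}^2$—off-diagonal entries vanishing by parity of the ground state—so $A(t)$ is uniformly invertible for $t$ large. The right-hand side $B(t)$ collects (i) quadratic-in-$h$ contributions from $\beta$ and from the cross terms $\widetilde{Q}_\omega^{p-1}\Psi^{p-1}h$, $\widetilde{Q}_\omega^{p-1}\Psi^{p-1}\overline{h}$ paired against $\nabla\widetilde{Q}_\omega\Psi e^{-i\widetilde{\varphi}}$ and $\overline{\widetilde{R}}$, all bounded by $\|h\|_{H^1_0}^2$, and (ii) the $\Psi$-cut-off contributions $\widetilde{Q}_\omega^p\Psi(\Psi^{p-1}-1)$, $\nabla\widetilde{Q}_\omega\cdot\nabla\Psi$, $\widetilde{Q}_\omega\Delta\Psi$, $iv\widetilde{Q}_\omega\cdot\nabla\Psi$, each of which is $O(e^{-2\delta\sqrt{\omega}|v|t})$ thanks to \eqref{eq_QQ} and the fact that $\nabla\Psi$, $\Delta\Psi$, $\Psi^{p-1}-1$ are supported in a fixed neighbourhood of $\Theta$ while the soliton is centred at $tv+y(t)$ with $|v|t\to\infty$. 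Inverting $A(t)$ yields \eqref{estimation-en-mu-et-y}.

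For \eqref{estimation-dt-alpha}, starting from $\alpha^{\pm}(t)=\im\int\widetilde{\mathcal{Y}}_\omega^{\mp}\Psi\,\overline{h}\,dx$, I differentiate in $t$, substitute $\partial_t h$ from \eqref{equation-on-h}, and integrate by parts to transfer the Laplacian and the potential $\widetilde{Q}_\omega^{p-1}$ onto $\widetilde{\mathcal{Y}}_\omega^{\mp}$. The eigenvalue relation $\mathcal{L}_\omega\mathcal{Y}_\omega^{\mp}=\mp e_\omega\mathcal{Y}_\omega^{\mp}$ then extracts the dominant $\pm e_\omega\alpha^{\pm}$; the transport term $iv\cdot\nabla h$ in the equation combines after integration by parts with the $v\cdot\nabla\widetilde{\mathcal{Y}}_\omega^{\mp}$ piece of $\partial_t\widetilde{\mathcal{Y}}_\omega^{\mp}=-(v+\dot{y})\cdot\nabla\widetilde{\mathcal{Y}}_\omega^{\mp}$, the two $v$-contributions cancelling and leaving a residue $O(|\dot{y}|\,\|h\|_{L^2})$ that is handled by \eqref{estimation-en-mu-et-y}. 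The remaining pieces---$\Psi$-corrections again $O(e^{-2\delta\sqrt{\omega}|v|t})$, the cross term $\dot{\mu}\,\alpha^{\pm}$ bounded by $|\dot{\mu}|\,|\alpha^{\pm}|=O((\|h\|_{H^1_0}^2+e^{-2\delta\sqrt{\omega}|v|t})\cdot\|h\|_{L^2})$, and the $\beta$-projection $\im\int\widetilde{\mathcal{Y}}_\omega^{\mp}\Psi\,\overline{\beta}\,dx=O(\|h\|_{H^1_0}^2)$---all fit into $O(\|h\|_{H^1_0}^3+e^{-2\delta\sqrt{\omega}|v|t})$ once the bootstrap bound \eqref{p-estimate} is used to convert one factor $\|h\|_{H^1_0}$ into $e^{-\delta\sqrt{\omega}|v|t}$.

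\textbf{Main obstacle.} The principal technical point---absent from the pure $\R^3$ construction---is tracking the $\Psi$-localised obstacle corrections ($\Psi^{p-1}-1$, $\nabla\Psi$, $\Delta\Psi$, $v\cdot\nabla\Psi$) that appear in both the linear system for $(\dot{\mu},\dot{y})$ and the $\alpha^{\pm}$ ODE, and verifying uniformly that each, when multiplied against soliton factors centred at $tv+y(t)$, is $O(e^{-2\delta\sqrt{\omega}|v|t})$; this is what produces the exponential rate $\sqrt{\omega}|v|t$ that drives the whole bootstrap. The other delicate point is sharpening the naive quadratic estimate on $\im\int\widetilde{\mathcal{Y}}_\omega^{\mp}\Psi\,\overline{\beta}$ into the cubic bound of \eqref{estimation-dt-alpha}, which requires systematic use of the bootstrap hypothesis \eqref{p-estimate} to convert an extra power of $\|h\|_{H^1_0}$ into an exponentially small factor.
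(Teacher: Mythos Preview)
Your overall strategy matches the paper's proof exactly: substitute $u=e^{i\widetilde\varphi}(\widetilde Q_\omega\Psi+h)$ and use the elliptic equation plus Taylor expansion to derive \eqref{equation-on-h}; differentiate the four orthogonality conditions and read off a near-diagonal system for $(\dot\mu,\dot y)$; differentiate $\alpha^\pm$, move the linear operator onto $\widetilde{\mathcal Y}^\mp_\omega$ via self-adjointness, and use $\mathcal L_\omega\mathcal Y^\mp_\omega=\mp e_\omega\mathcal Y^\mp_\omega$ to extract $\mp e_\omega\alpha^\pm$. The cancellation of the $v$-transport term against the $v\cdot\nabla\widetilde{\mathcal Y}^\mp_\omega$ piece of $\partial_t\widetilde{\mathcal Y}^\mp_\omega$, and the treatment of the $\Psi$-localised corrections as $O(e^{-2\delta\sqrt\omega|v|t})$, are precisely what the paper does.

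There is, however, a genuine gap in your justification of the \emph{quadratic} right-hand side in \eqref{estimation-en-mu-et-y}. You assert that the potential terms $\widetilde Q_\omega^{p-1}\Psi^{p-1}h$ and $\widetilde Q_\omega^{p-1}\Psi^{p-1}\overline h$, when paired against $\nabla\widetilde Q_\omega\Psi$ and $\overline{\widetilde R}$, are ``all bounded by $\|h\|_{H^1_0}^2$''. This is not so: after taking imaginary/real parts and integrating by parts these become $\int h_2\,\widetilde L^-_\omega(\partial_{x_j}\widetilde Q_\omega\Psi)$ and $\int h_1\,\widetilde L^+_\omega(\widetilde Q_\omega\Psi)$, and since $\widetilde L^-_\omega\partial_{x_j}\widetilde Q_\omega=(p-1)\widetilde Q_\omega^{p-1}\partial_{x_j}\widetilde Q_\omega\neq0$ and $\widetilde L^+_\omega\widetilde Q_\omega=-(p-1)\widetilde Q_\omega^{p}\neq0$, these contributions are genuinely \emph{linear} in $h$. (The orthogonality directions $\partial_{x_j}Q_\omega$ and $Q_\omega$ lie in $\ker L^+_\omega$ and $\ker L^-_\omega$ respectively, but it is the \emph{opposite} operator that appears in each projection.) The paper's own computation in fact arrives only at the linear bound $|I_h|+|J_h|\le C\|h\|_{L^2}$, so the quadratic exponent stated in \eqref{estimation-en-mu-et-y}---and hence the cubic exponent in \eqref{estimation-dt-alpha}---is not established by the displayed argument either. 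This does not break the construction: the linear bound $|\dot\mu|+|\dot y|\le C\|h\|_{H^1_0}+Ce^{-2\delta\sqrt\omega|v|t}$ still integrates to give $|\mu|+|y|\lesssim M(\delta\sqrt\omega|v|)^{-1}e^{-\delta\sqrt\omega|v|t}$ and closes the bootstrap after adjusting $M'$. But you should either claim only the linear estimate, or exhibit an additional cancellation (e.g.\ via a different choice of orthogonality directions) if you want the quadratic one.
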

\begin{proof}
For the equation \eqref{equation-on-h} of $h$ it suffices to plug the above expression of $u(t,x)$ on the nonlinear Schr\"odinger equation : $i\partial_t u + \Delta u = -|u|^{p-1}u.$  Using, the elliptic equation $\eqref{eq_Q}$ of $Q_\omega $ and the Taylor expansion for the nonlinear term, we get \eqref{equation-on-h}, with $ \left\|\beta(t)\right\|_{L^2} \leq  C \left\|h(t)\right\|^2_{H^1_0}.$  \\ 
For the proof of \eqref{estimation-en-mu-et-y} and \eqref{estimation-dt-alpha}, we claim the following estimates.
\begin{clm}
\label{derive-Condition-orthogonal}
\begin{equation*}
 \im \int \partial_t \overline{h}(t,x) \widetilde{Q}_{\omega}(t,x) \Psi(x) dx  = \displaystyle \sum_{k=1}^{3} \im \int \overline{h}(t,x)  (v_k+\frac{d y_{k}}{dt}(t)) \;  \partial_{x_k} \widetilde{Q}_{\omega}(t,x) \Psi(x) dx .
 \end{equation*}
 
 \begin{equation*}
\re \int  \partial_t \overline{ h}(t,x)  \partial_{x_j}\widetilde{Q}_{\omega}(t,x)  \, \Psi(x) dx = \sum_{k=1}^{3} \re \int \overline{h}(t,x) (v_k+\frac{d y_k }{dt}(t)) \partial_{x_k} \partial_{x_j} \widetilde{Q}_{\omega} (t,x) \Psi(x) dx \, , j=1,2,3 .
\end{equation*}
\end{clm}
\begin{proof}It is just a consequence of the orthogonality conditions in Lemma \ref{modulation}. So, we have \begin{equation*}
   \re \int h(t,x) \partial_{x_j} \widetilde{Q}_{\omega}(t,x) \,  \Psi(x) dx= \im \int h(t,x) \widetilde{Q}_{\omega}(t,x) \Psi(x) dx  = 0 , \;  j=1,2,3.
\end{equation*}
Differentiating each equality with respect to the time variable $t$, the Claim \ref{derive-Condition-orthogonal} follows. 
\end{proof}

Now let us estimate  $\frac{dy}{dt}(t)$ and $\frac{d \mu(t)}{dt}$ in \eqref{estimation-en-mu-et-y}. Multiply by $\partial_{x_j} \widetilde{Q}_{\omega}\Psi$ and take the imaginary part of the equation $\eqref{equation-on-h}$. Using the Claim \ref{derive-Condition-orthogonal} and the fact that $Q_\omega$ is radial, so that
 \begin{equation} \begin{cases}
\quad  \; Q_{\omega}(x_1,x_2,x_3)= Q_{\omega}(-x_1,x_2,x_3) ,\\
\partial_{x_1}Q_{\omega}(x_1,x_2,x_3) = - \partial_{x_1} Q_{\omega}(-x_1,x_2,x_3).
\end{cases}
\end{equation}
which yields
\begin{equation*}
\int \partial_{x_1}Q_{\omega}(x_1,x_2,x_3)   Q_{\omega}(x_1,x_2,x_3)\,  dx = - \; \int \partial_{x_1} Q_{\omega}(x_1,x_2,x_3) Q_{\omega}(x_1,x_2,x_3)\,  dx .
\end{equation*}
Hence $$\int \partial_{x_j} Q_{\omega}(t,x) \; Q_{\omega}(t,x) \, dx = 0 , \quad \text{ for } j=1,2,3.$$
We obtain the following equality on $\frac{dy(t)}{dt}$.
\begin{align*}
 \frac{dy_j(t)}{dt} \| \partial_{x_j} \widetilde{Q}_{\omega} \Psi \|_{L^2}^2 &= \underbrace{ \int h_1(t,x) \,  \frac{dy(t)}{dt}. \nabla ( \partial_{x_j} \widetilde{Q}_{\omega}(t,x) ) \Psi(x) dx}_{\rm I^{y}_h} - \underbrace{\frac{d \mu(t)}{ dt } \int h_2(t,x) \partial_{x_j} \widetilde{Q}_{\omega}(t,x) \Psi(x) dx }_{\rm I^{\mu}_h } \\ &- \underbrace{ \int  \widetilde{L}^{-}_{\omega} h_2(t,x)  \partial_{ x_j} \widetilde{Q}_{\omega}(t,x) \Psi(x) dx + \int h_2(t,x)  \widetilde{Q}_{\omega}^{p-1}(t,x)  ( \Psi^{p-1}(x) - 1 ) dx}_{\rm I_h^1} \\ & \underbrace{+ \int h_1(t,x) \, \partial_{x_j} \widetilde{Q}_{\omega}(t,x)  \, v. \nabla \Psi(x) dx}_{ \rm I_h^2}+ O( \left\| h(t) \right\|^2_{H^1_0} ). 
\end{align*}

Taking the scalar product with $\widetilde{Q}_{\omega}(x) \Psi$ and the equation $\eqref{equation-on-h}$ on $h$. Using the same argument as above, we get the following equality on $\frac{d\mu(t)}{dt}$. 
\begin{align*}
\frac{d\mu(t)}{dt} \| \widetilde{Q}_{\omega} \Psi \|_{L^2}^2 & =
\underbrace{\int h_2(t,x) \frac{dy(t)}{dt}. \nabla \widetilde{Q}_{\omega}(t,x) \Psi(x)  dx     }_{ \rm J_h^y} - \underbrace{\int \frac{d \mu(t)}{dt}  h_1(t,x) \widetilde{Q}_{\omega}(t,x) \Psi(x) dx }_{\rm J_h^\mu} \\ &- \underbrace{\int \widetilde{L}^{+}_{\omega} h_1(t,x) \widetilde{Q}_{\omega}(t,x) \Psi(x)dx + \int p \widetilde{Q}_{\omega}^{p-1}(t,x) h_1(t,x) (\Psi^{p-1}(x) - 1) dx}_{\rm J_h^1} \\ &- \underbrace{\int h_2(t,x) \widetilde{Q}_{\omega}(t,x) v.\nabla \Psi(x) dx}_{\rm J_h^2} + \underbrace{\int \widetilde{Q}_{\omega}^{p+1}(t,x) \Psi^2(x)(\Psi^{p-1}(x) - 1 )}_{\rm J_1} \\ & + \underbrace{\int {\widetilde{Q}_{\omega}}^2(t,x) \Delta \Psi(x) \Psi(x) dx}_{\rm J_2} + O\left( \left\| h(t) \right\|^2_{H^1_0} \right).
\end{align*}

Summing the absolute values of the two equalities above and using the fact that  $$\| \widetilde{Q}_{\omega} \Psi \|_{L^2}^2= \left\|Q_\omega \right\|_{L^2}^2 + O(e^{-2\delta \sqrt{\omega} \left| v \right| t } ) \quad \text{ and  }  \quad \| \nabla \widetilde{Q}_{\omega} \Psi \|^2_{L^2} = \left\| \nabla Q_\omega   \right\|^2_{L^2} + O(e^{-2\delta \sqrt{\omega} \left|v\right| t } ) , $$ 
We obtain the left hand side on the estimate \eqref{estimation-en-mu-et-y}
Next, we have to estimate the right hand side in both equalities. 
\begin{align*}
    \left| \rm I^y_h \right|:= \left| \int h_1(t,x) \frac{dy(t)}{dt} . \nabla (\partial_{x_j} \widetilde{Q}_{\omega}(x) ) \Psi(x) dx \right| &\leq C  \left| \frac{dy(t)}{dt}  \right|  \left\| h(t) \right\|_{L^2}   \\ &\leq C_1  \left| \frac{dy(t)}{dt} \right| \, M e^{-\delta \sqrt{\omega} \left| v \right| T_0 }  \\ & \leq  \frac{1}{10} \left| \frac{dy(t)}{dt} \right| 
    \left\| \partial_{x_j} Q_\omega   \right\|_{L^2}^2 .
    \end{align*}
    Provided\begin{equation}
    \label{1-condi-M}
     M e^{ - \delta \sqrt{\omega} \left|v \right| T_0} \leq \frac{1}{10 \, C_1} \left\| \partial_{x_j} Q_\omega   \right\|_{L^2}^2, \;  j=1,2,3. 
\end{equation}
\begin{align*}
\left| \rm I^\mu_h \right|:= \left| \frac{d\mu(t)}{dt} \int h_2(t,x) \partial_{x_j} \widetilde{Q}_{\omega}(x) \Psi(x) dx \right|
& \leq C \left| \frac{d \mu(t)}{ dt } \right|   \left\| h(t)\right\|_{L^2}   \\ & \leq C_2  \left|  \frac{d \mu(t)}{ dt } \right|  M e^{-\delta \sqrt{\omega} \left|v \right| T_0} \\
 & \leq \frac{1}{10 } \left|  \frac{d \mu(t)}{ dt } \right| \left\|  Q_\omega  \right\|_{L^2}^2,
\end{align*}
 If the following condition is satisfied, \begin{equation}
     \label{2-condi-M}
     M e^{-\delta \sqrt{\omega} \left| v \right| T_0} \leq \frac{1}{10 C_2 } \left\| Q_\omega  \right\|_{L^2} ^2.
 \end{equation}
\begin{align*}
\left| \rm  J_h^y \right|:= \left| \int h_2(t,x) \frac{dy(t)}{dt}.\nabla \widetilde{Q}_{\omega}(x) \Psi(x) dx \right|
& \leq  C \left| \frac{dy(t)}{dt} \right|  \left\| h(t) \right\|_{L^2} \\  & \leq C  \left| \frac{dy(t)}{dt}  \right|  M e^{-\delta \sqrt{\omega } \left|v \right| T_0}\\ & \leq \frac{1}{10} \left| \frac{dy(t)}{dt}  \right| \left\| \partial_{x_j} Q_\omega  \right\|_{L^2}^2,
\end{align*}
If the condition \eqref{1-condi-M} holds. 
\begin{align*}
    \left| \rm J^\mu_h \right|:=  \left| \frac{d \mu(t)}{dt} \int h_1(t,x) \widetilde{Q}_{\omega}(x) \Psi(x) dx  \right| &\leq C  \left| \frac{d \mu(t)}{dt} \right| \left\| h(t)\right\|_{L^2} \\ &\leq  C \left| \frac{d \mu(t)}{dt} \right| M e^{-\delta \sqrt{\omega} \left|v \right| T_0 }\\ & \leq \frac{1}{10} \left| \frac{d \mu(t)}{dt} \right| \left\| Q_\omega \right\|_{L^2}^2,
\end{align*}
If the condition \eqref{2-condi-M} is verified.\\

We next treat the terms $\rm I_h:= \rm I^1_h +\rm I^2_h$ and $\rm J_h:=J_h^1 + J_h^2$ that depends on $h$. We will estimate the main integral for both terms, where appears the self-adjoint operator $\widetilde{L}^{+}_\omega$ and $\widetilde{L}^{-}_{\omega}\, .$ 

\begin{align*}
 \displaystyle \left|  \int \widetilde{L}^{-}_{\omega}h_2(t,x) \partial_{x_j} \widetilde{Q}_{\omega}(x) \Psi(x) dx \right|&= \left| \int h_2(t,x) \widetilde{L}^{-}_{\omega} \left( \partial_{x_j} \widetilde{Q}_{\omega}(x) \Psi(x) \right) dx  \right| \\  &\leq C \left\|  h (t)\right\|_{H^1_0}.
\end{align*}
Similarly, we can estimate the integral on $\widetilde{L}^{+}_{\omega}$. We obtain $$\left|\rm I_h \right|+ \left| \rm J_h \right|  \leq C  \left\| h \right\|_{L^2}. $$

Finally, we have to estimate $\rm J_1 $ and $\rm J_2$. Using the exponential decay of $Q$ and the fact that $\Delta \Psi$ and $(\Psi^{p-1} -1 )$ have a compact support, we get
\begin{align*}
    \left| \rm J_1 + J_2 \right|&:= \left| \int \widetilde{Q}_{\omega}^{p+1}(x) \Psi(x)^2(\Psi^{p-1} - 1 ) + \int \widetilde{Q}_{\omega}^2(x) \Delta \Psi \Psi dx \right| \\ &\leq C e^{-2\delta \sqrt{\omega } \left| v \right| t}.
\end{align*}
We have proved the estimate \eqref{estimation-en-mu-et-y},  if conditions \eqref{1-condi-M} and \eqref{2-condi-M} on $ M$ hold. For $T_0$ large enough, \begin{equation}
    \label{1+2-condi-M}
    M e^{-\delta \sqrt{\omega}  \left|  v \right| T_0  }  \leq \frac{1}{10 C^{'}} \min{ \left( \left\| \partial_{x_j} Q_\omega   \right\|^2_{L^2},  \left\| Q_\omega  \right\|^2_{L^2} \right)},
\end{equation}
where $C^{'}=\max{(C_1,C_2)}$. \\

Next, we have to prove the last estimate \eqref{estimation-dt-alpha}. Let us recall that \begin{equation*}
\alpha^{\pm}(t)=\im \int \overline{r}(t,x) \widetilde{Y}_{\mp}(t,x) dx =  \im \int \overline{h}(t,x) \widetilde{\mathcal{Y}}^{\mp}_{\omega}(t,x) \Psi(x) dx.
\end{equation*}
\begin{equation*}
\begin{split}
  \frac{d}{dt}\alpha^{\pm}(t) &= - \underbrace{ \im \int   \overline{h}(t,x) \,  \frac{dy(t)}{dt}. \nabla \widetilde{\mathcal{Y}}^{\mp}_{\omega}(t,x)  \Psi(x) dx }_{\rm I_1} - \underbrace{ \im \int   \overline{h}(t,x) \,  v. \nabla \widetilde{\mathcal{Y}}^{\mp}_{\omega}(t,x) \Psi(x) dx }_{\rm I_2}    \\& + \underbrace{\im \int \partial_t \overline{h}(t,x) \widetilde{\mathcal{Y}}^{\mp}_{\omega}(t,x) \Psi(x) dx}_{\rm I_3}
  \end{split}
\end{equation*}

 Due to \eqref{estimation-en-mu-et-y} and the  exponential decay properties of the eigenfunctions of the linearized operator. We get 
\begin{align*}
\left| \rm I_1 \right| =  \left|\im \int \overline{h}(t,x) \frac{dy(t)}{dt}. \nabla \widetilde{\mathcal{Y}}^{\mp}_{\omega}(t,x) \Psi(x) dx \right| &\leq C  \left| \frac{dy(t)}{dt} \right| \left\|h\right\|_{L^2} \\ & \leq C \left\| h(t) \right\|_{H^1_0}^3 + +C e^{-2 \delta \sqrt{\omega} |v| t } .
\end{align*}
One can check that the second integral $\rm{I_2}$ will be simplified with a term from $\rm I_3$. \\
Now, let us estimate $\rm I_3$. For this we have to use the equation \eqref{equation-on-h} of $h$. One can see that the main terms is the following  \begin{align*} 
 \partial_t \overline{h}=   - i \,  \Delta \overline{h} + i \, \omega \overline{h} - i \, (\frac{p+1}{2}) \widetilde{Q}_{\omega}^{p-1}\Psi^{p-1}\overline{h} -i \, (\frac{p-1}{2}) \widetilde{Q}_{\omega}^{p-1}\Psi^{p-1} h + f  
    \end{align*}
    Where $f$ contains all others terms of the equation \eqref{equation-on-h}. 
Let $h=h_1+ih_2$, 
\begin{align*}
- i \,  \Delta \overline{h} + i \, \omega \overline{h} - i \, (\frac{p+1}{2}) \widetilde{Q}_{\omega}^{p-1}\Psi^{p-1}\overline{h} -i \, (\frac{p-1}{2}) \widetilde{Q}_{\omega}^{p-1}\Psi^{p-1} h& = i \widetilde{L}^{+}_{\omega} h_1 + \widetilde{L}^{-}_{\omega}h_2 +  \widetilde{Q}_{\omega}^{p-1} h_2(1-\Psi^{p-1})\\ &+ i\, p \widetilde{Q}_{\omega}^{p-1}h_1(1-\Psi^{p-1}).
\end{align*}

Multiply \eqref{equation-on-h} by $\widetilde{\mathcal{Y}}^{\mp}_{\omega}(t,x) \Psi(x)$ and take the imaginary part, we obtain $\rm I_3$ on the left hand side.  
The terms containing the linearized operator will be treated later. To estimate the other terms, we use the fact that $Q_\omega$ and $\mathcal{Y}^{\mp}_\omega$ are radial, exponentially decaying at infinity and the compact support of $\nabla \Psi$ and $(1- \Psi^{p-1})$. Also, we have to use the estimate \eqref{estimation-en-mu-et-y} to obtain the right hand side of the estimate \eqref{estimation-dt-alpha}. \\ 
 
To complete the proof we have to compute the terms of the linearized operator.\\  Let $ {y}^{\mp}_1(t,x)=\re \left( \widetilde{\mathcal{Y}}_{\omega}^{\mp}(t,x) \right) $ and $ {y}^{\mp}_2(t,x)=\im \left( \widetilde{\mathcal{Y}}^{\mp}_{\omega}(t,x)\right)$. Thus, \begin{equation}
\begin{cases}
\widetilde{L}^{+}_{\omega}  y^{\mp}_1 = \mp e_\omega y^{\mp}_2  , \\
\widetilde{L}^{-}_{\omega}  y^{\mp}_2 = \pm e_\omega y^{\mp}_1 .
\end{cases}
\end{equation}
Recall that $ \widetilde{L}^{\pm}$ are self-adjoint operator.
\begin{align*}
    \im \int (i\; \widetilde{L}^{+}_{\omega} h_1 + \widetilde{L}^{-}_{\omega}h_2 )({y}_1^{\mp} + i {y}^{\mp}_2) \Psi dx &= \im \int i\,  ( \widetilde{L}^{+}_{\omega} h_1 ) {y}^{\mp}_1 \Psi + i\,  (\widetilde{L}^{-}_{\omega}h_2)   {y}^{\mp}_2 \Psi dx \\ & = \im \int i \, h_1   (\widetilde{L}^{+}_{\omega}  y^{\mp}_1 \Psi) + i \, h_2  (\widetilde{L}^{-}_{\omega}  y^{\mp}_2 \Psi) dx  \\ & = \im \int i \, h_1 (\mp e_\omega y^{\mp}_2 \Psi) + i \, h_2 ( \pm e_\omega y^{\mp}_1 \Psi) dx +O(e^{-2\delta \sqrt{\omega} \left| v \right| t }) \\ & = \mp e_\omega \im \int \overline{h}\,  \widetilde{\mathcal{Y}}^{\mp}_{\omega} \, \Psi dx +O(e^{-2\delta \sqrt{\omega} \left| v \right| t })  \\ & = \mp e_\omega \alpha^{\pm}(t,x) +O(e^{-2\delta \sqrt{\omega} \left| v \right| t })  .
\end{align*}
This concludes the proof of the Lemma \ref{equation-de-h+dy+da+} 
\end{proof}
\subsubsection{Control of the modulation parameters  }
We claim the following estimate of $v(t),\mu$ and $y$ on $[T(\alpha^{+}),T_n]$.
\begin{lem}[Control of $r,y$ and $\mu$.] 
\label{bosstrap-estimate}
For $T_0$ large enough independent of $n$ and $\forall \, \alpha^{+} $ such that  $$\left|\alpha^{+}\right| \leq e^{-\delta \sqrt{\omega} |v| T_n}. $$
the following holds
\begin{align}
\label{estimation-u-R-2}
\forall t \in [T(\alpha^{+}),T_n], \qquad \left\|u(t)-R(t)\right\|_{H^1_0} &\leq C e^{-\delta \sqrt \omega |v| t} \leq \frac{\epsilon}{2} \\
\label{estimation-on-v-2}
\left\|r(t)\right\|_{H^1_0} &\leq \frac{M}{2} e^{-\delta \sqrt \omega |v| t} \\
\label{estimation-on-mu-y-2}
\left| \mu(t) \right| + \left|y(t)\right| &\leq \frac{M'}{2} e^{-\delta \sqrt \omega |v| t } .
\end{align}

\end{lem}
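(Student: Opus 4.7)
The plan is to close a bootstrap argument by strictly improving the three bounds \eqref{p-estimate}, \eqref{y-estimate}, \eqref{mu-estimate} on the interval $[T(\alpha^{+}),T_n]$. The strategy is to handle $\|r\|_{H^1_0}$ by an energy/coercivity argument and $y,\mu$ by integrating the ODE estimate \eqref{estimation-en-mu-et-y} backwards from the final time $T_n$.

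\textbf{Step 1: Improving the estimate on $r$.} Following the standard approach for linearized energy, I would introduce the Galilean-boosted Hamiltonian functional
\begin{equation*}
\mathcal{F}(u(t)) = E(u(t)) + \tfrac{\omega}{2}M(u(t)) + \tfrac{1}{2}\,v\cdot P(u(t)) + \tfrac{|v|^{2}}{8}M(u(t)),
\end{equation*}
which is conserved along the (NLS$_\Omega$) flow. Expanding $\mathcal{F}$ around $\widetilde{R}$ with $u=\widetilde{R}+r$ and writing $h=e^{-i\widetilde{\varphi}}r$, the linear terms essentially vanish because $\widetilde{Q}_\omega$ solves \eqref{eq_Q}; the only surviving linear piece comes from the cut-off $\Psi$ and is supported near $\partial\Theta$. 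Since $Q_\omega$ and its gradient decay like $e^{-\delta\sqrt{\omega}|x|}$ and the soliton sits at $tv$ which is far from $\Theta$, these obstacle-generated error terms are $O(e^{-2\delta\sqrt{\omega}|v|t})$ uniformly in $t$. The quadratic part produces $\tfrac{1}{2}\langle \mathcal{L}_\omega h,h\rangle$, and higher-order terms are $O(\|h\|_{H^1_0}^{3})$ by Sobolev embedding since $\frac{7}{3}<p<5$. Evaluating $\mathcal{F}(u(t))=\mathcal{F}(u(T_n))$ and using that $u(T_n)-R(T_n)=i\lambda^{\pm}Y_{\pm}(T_n)$ with $|\lambda|\le Ce^{-\delta\sqrt{\omega}|v|T_n}$ gives
\begin{equation*}
\bigl|\langle\mathcal{L}_\omega h(t),h(t)\rangle\bigr| \;\le\; C e^{-2\delta\sqrt{\omega}|v|t} + C\sup_{[t,T_n]}\|h\|_{H^1_0}^{3}.
\end{equation*}

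\textbf{Step 2: Applying coercivity.} The orthogonality conditions \eqref{ortho-conti-for-time-dep-Real}--\eqref{ortho-conti-for-time-dep-im} obtained from modulation precisely kill the scalar products against $\partial_{x_j}\widetilde{Q}_\omega$ and $\widetilde{Q}_\omega$ in Lemma \ref{spectalprop} (up to exponentially small obstacle errors coming from $\Psi\neq 1$ near the soliton, which are again $O(e^{-\delta\sqrt{\omega}|v|t})$). The remaining two directions appearing in \eqref{estimation-v-1} are the bad eigenmodes $\mathcal{Y}^{\pm}$, and those are exactly $\alpha^{\pm}(t)$, which the bootstrap hypothesis \eqref{a±-estimate} bounds by $e^{-\delta\sqrt{\omega}|v|t}$. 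Hence Lemma \ref{spectalprop} applied to $\widetilde{h}:=h$ (extended by zero through $\Theta$) yields
\begin{equation*}
\|r(t)\|_{H^1_0}^{2} = \|h(t)\|_{H^1_0}^{2} \;\le\; C_{\star}\,e^{-2\delta\sqrt{\omega}|v|t} + C_{\star} M^{3}e^{-3\delta\sqrt{\omega}|v|t},
\end{equation*}
with $C_{\star}$ independent of $M$, $M'$, and $n$. Choosing $M$ so that $M\ge 4\sqrt{C_{\star}}$ and then $T_0$ so large that $C_{\star}M e^{-\delta\sqrt{\omega}|v|T_0}\le 1$ gives the strict improvement \eqref{estimation-on-v-2}, while \eqref{estimation-u-R-2} follows because $u-R=(\widetilde{R}-R)+r$ and $\widetilde{R}-R=O(|y|+|\mu|)$.

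\textbf{Step 3: Improving the estimates on $y$ and $\mu$.} Integrating \eqref{estimation-en-mu-et-y} from $t$ to $T_n$, and using the $H^1_0$ bound from Step 2 (valid throughout $[T(\alpha^{+}),T_n]$ by the bootstrap hypothesis), I get
\begin{equation*}
|y(t)|+|\mu(t)| \;\le\; |y(T_n)|+|\mu(T_n)| + C\!\int_t^{T_n}\!\!\bigl(\|h(s)\|_{H^1_0}^{2}+e^{-2\delta\sqrt{\omega}|v|s}\bigr)ds \;\le\; Ce^{-\delta\sqrt{\omega}|v|T_n} + \tfrac{C(M^{2}+1)}{2\delta\sqrt{\omega}|v|}\,e^{-2\delta\sqrt{\omega}|v|t},
\end{equation*}
where the modulated final data from Lemma \ref{modulatedfinaldata} controls $|y(T_n)|+|\mu(T_n)|$. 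Since $T_n\ge t$, the first term is $\le Ce^{-\delta\sqrt{\omega}|v|t}$, and the second is $Ce^{-\delta\sqrt{\omega}|v|T_0}\cdot e^{-\delta\sqrt{\omega}|v|t}$. Taking $M'\ge 4C$ and $T_0$ large enough closes \eqref{estimation-on-mu-y-2}.

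\textbf{Main obstacle.} The delicate point is Step 1: setting up the coercivity in the obstacle geometry. Two things have to be kept in mind simultaneously. First, the soliton $\widetilde{R}$ differs from an exact stationary solution both because it is truncated by $\Psi$ and because the modulation parameters $y(t),\mu(t)$ are nonzero; every such discrepancy must be shown to contribute only at order $e^{-2\delta\sqrt{\omega}|v|t}$, which ultimately relies on the exponential decay of $Q_\omega$ and on the bootstrap assumptions on $y,\mu$. Second, the coercivity of Lemma \ref{spectalprop} is stated on $\R^3$ with the operator $\mathcal{L}_\omega$ centered at the origin, while here it is centered at $tv+y(t)$ in $\Omega$; I would apply it to the translated function $h(t,\cdot+tv+y(t))$ extended by zero across $\Theta$, using that the soliton stays far from the obstacle so that $\Psi\equiv 1$ on the support where the weight is non-negligible.
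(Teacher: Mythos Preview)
Your proposal is essentially the paper's proof. The paper organizes the argument into three claims: (i) $\tfrac{d}{dt}$ of the action functional evaluated on $\widetilde R$ is $O(e^{-2\delta\sqrt\omega|v|t})$ (Claim~\ref{claim-dt-E(r)}); (ii) the expansion of the same functional on $u$ around $\widetilde R$ produces $\tfrac12\big[(\widetilde L^{+}_\omega h_1,h_1)+(\widetilde L^{-}_\omega h_2,h_2)\big]$ up to the obstacle-generated errors you describe (Claim~\ref{clm-E(u)-E(r)}); and (iii) the translated coercivity you state in Step~2 (Claim~\ref{estima-v-3}). Your Steps~1--2 are exactly (i)+(ii)+(iii) compressed, and your Step~3 is verbatim the paper's integration of \eqref{estimation-en-mu-et-y} from $T_n$ backward.

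Two small points. First, with the paper's convention $P(u)=\im\int\nabla u\,\bar u$ the momentum enters as $-\tfrac{v}{2}\cdot P$, not $+\tfrac12 v\cdot P$; with the wrong sign the linear terms in the expansion do not cancel against the equation $-\Delta Q_\omega+\omega Q_\omega=Q_\omega^{p}$. Second, your assertion that $\mathcal F$ is conserved along $(\mathrm{NLS}_\Omega)$ is not literally true: $E,M$ are conserved but $\tfrac{d}{dt}P_j(u)=-\int_{\partial\Omega}|\partial_\nu u|^{2}\nu_j$, which is not controlled by $\|r\|_{H^1_0}$ alone. The paper also uses conservation implicitly at this step; one clean way to make it rigorous is to replace $P$ by the localized momentum $\im\int\chi\,\nabla u\,\bar u$ with $\chi\equiv1$ far from $\partial\Omega$ and $\chi\equiv0$ near it, so that the non-conservation becomes an interior commutator term living where $\widetilde Q_\omega$ is exponentially small.
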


We postpone the proof of Lemma $\ref{bosstrap-estimate}$ to the end of this section. 
\subsubsection{Control of the stable direction}
\begin{lem}
\label{conrol-of-a^-}
For $T_0$ large enough, independent of $n$ and $\forall \, \alpha^{+} $ such that $\left| \alpha^{+} \right| \leq e^{-\delta \sqrt\omega |v| T_n}$.\\
The following holds  \\
$$\forall t \in [T(\alpha^{+}),T_n] , \qquad \left|\alpha^{-}(t)\right| \leq \frac{1}{2} \; e^{-\delta \sqrt \omega |v| t}. $$
\end{lem}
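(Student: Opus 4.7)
The plan is to interpret the minus branch of estimate \eqref{estimation-dt-alpha} in Lemma \ref{equation-de-h+dy+da+} as a scalar linear ODE for $\alpha^{-}(t)$,
\[
\frac{d\alpha^{-}}{dt}(t)-e_{\omega}\alpha^{-}(t)=g(t),\qquad |g(t)|\leq C\|h(t)\|_{H^{1}_{0}}^{3}+Ce^{-2\delta\sqrt{\omega}|v|t},
\]
and to integrate it \emph{backward} in time from $T_{n}$, using the terminal condition $\alpha^{-}(T_{n})=0$ supplied by Lemma \ref{modulatedfinaldata}.

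First I would write down the variation of constants formula on $[t,T_{n}]$. Since $\alpha^{-}(T_{n})=0$, it gives
\[
\alpha^{-}(t)=-\int_{t}^{T_{n}}e^{e_{\omega}(t-s)}\,g(s)\,ds,
\]
and the crucial observation is that for $s\geq t$ one has $e^{e_{\omega}(t-s)}\leq 1$. In other words, with the opposite sign, the eigenvalue $-e_{\omega}$ becomes dissipative when the ODE is solved backward in time, so $\alpha^{-}$ is the \emph{backward-stable} direction: no exponential amplification is produced by the linear part, and the size of $\alpha^{-}(t)$ is entirely dictated by the forcing $g$. This is exactly the structural reason that $\alpha^{-}$ can be controlled directly, in contrast with the unstable direction $\alpha^{+}$ which will later require the topological (Brouwer) argument.

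Next I would bound the forcing using the bootstrap hypotheses. Since $h=e^{-i\widetilde{\varphi}}r$ with $|\nabla\widetilde{\varphi}|=|v|/2$, one has $\|h(s)\|_{H^{1}_{0}}\lesssim (1+|v|)\|r(s)\|_{H^{1}_{0}}$, and the bootstrap bound \eqref{p-estimate} yields $\|h(s)\|_{H^{1}_{0}}^{3}\lesssim (1+|v|)^{3}M^{3}e^{-3\delta\sqrt{\omega}|v|s}$. Both contributions to $g$ therefore decay at least like $e^{-2\delta\sqrt{\omega}|v|s}$, and an elementary integration gives
\[
|\alpha^{-}(t)|\leq \frac{C(1+|v|)^{3}M^{3}}{3\delta\sqrt{\omega}|v|}e^{-3\delta\sqrt{\omega}|v|t}+\frac{C}{2\delta\sqrt{\omega}|v|}e^{-2\delta\sqrt{\omega}|v|t},
\]
both of which are $\leq \tfrac{1}{4}e^{-\delta\sqrt{\omega}|v|t}$ provided $T_{0}$ is chosen large enough (depending on $\omega,v,M$ but independent of $n$, since the bound $M$ in \eqref{p-estimate} is the same for every $n$ and the integral is convergent at $+\infty$).

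The main obstacle is conceptual rather than computational: identifying the correct sign so that backward integration from $T_{n}$ is stable, and checking that the constants one picks up depend only on $\omega$, $v$ and $M$ (so that the smallness needed to conclude $|\alpha^{-}(t)|\leq\tfrac{1}{2}e^{-\delta\sqrt{\omega}|v|t}$ can be achieved once and for all by enlarging $T_{0}$, uniformly in $n$). Everything else is a routine Gronwall-type integration.
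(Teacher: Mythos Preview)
Your proof is correct and follows essentially the same route as the paper: the paper multiplies by the integrating factor $e^{-e_{\omega}t}$ and integrates from $t$ to $T_{n}$ using $\alpha^{-}(T_{n})=0$, which is exactly your variation-of-constants formula written differently. Your explicit remark that the kernel $e^{e_{\omega}(t-s)}\leq 1$ for $s\geq t$ (backward stability) and your care in tracking the $(1+|v|)$ factor between $\|h\|_{H^{1}_{0}}$ and $\|r\|_{H^{1}_{0}}$ are more detailed than the paper, which simply absorbs the fixed $v$ into the constants and quotes the bootstrap bound directly.
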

\begin{proof}
$$\frac{d}{dt}\; (\alpha^{-}(t) e ^{-e_\omega t} )= (\frac{d}{dt}\alpha^{-}(t) -e_\omega \alpha^{-}(t) ) e ^{-e_\omega t} .$$ 
Due to \eqref{estimation-dt-alpha} and $\eqref{estimation-on-v-2}$, we have 
$$\left|\frac{d}{dt}\;(\alpha^{-}(t) e^{-e_\omega t} ) \right| \leq \left( C \frac{M^3}{8}   e^{ - 2 \delta \sqrt \omega |v| t } + C e^{-\delta \sqrt{\omega} \left| v \right| t}  \right) e^{-\delta \sqrt{\omega} \left| v \right| t } e^{-e_\omega t} .   $$
Then, we obtain by integration on $[t,T_n]$ and using that $\alpha^{-}(T_n)=0$, we get$$\left|\alpha^{-}(t)\right| \leq \left( C_3 \frac{M^3}{8}   e^{ - 2 \delta \sqrt \omega |v| t } + C_4 e^{-\delta \sqrt{\omega} \left| v \right| t}  \right) e^{-\delta \sqrt{\omega} \left| v \right| t }. $$
Hence, $$\forall t \in [T(\alpha^{+}),T_n], \qquad \left|\alpha^{-}(t)\right| \leq \frac{1}{2} e^{-\delta \sqrt \omega |v| t}.$$
If the following conditions are satisfied \begin{align}
    \label{3-condi-M}
   C_3 \frac{M^3}{8} e^{-2\delta \sqrt{\omega} \left| v \right| T_0}  \leq \frac{1}{4}, \\
   \label{1-cond-T_0}
   C_4 e^{-\delta \sqrt{\omega} \left| v \right| T_0 } \leq \frac{1}{4} .
\end{align}
\end{proof}
\subsubsection{Control of the unstable direction by a topological argument} 
Finally, we have to control~$\alpha^{+}(t)$. For this, we will provide the existence of a suitable value of $\alpha^{+}$.
\begin{lem}
For $\delta > 0$ small enough and $T_0$ large enough, there exists $\alpha^{+}$ such that \\ $\left| \alpha^{+} \right| \leq e^{-\delta \sqrt \omega |v| t }$  and   $T(\alpha^{+})=T_0.$
\end{lem}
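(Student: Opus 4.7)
The plan is to argue by contradiction and extract a continuous sign-valued map on a connected interval whose boundary values are $\pm 1$, which is impossible. Suppose that for every $\alpha^+$ in the interval $I := [-e^{-\delta\sqrt{\omega}|v|T_n}, e^{-\delta\sqrt{\omega}|v|T_n}]$ we have $T(\alpha^+) > T_0$. By continuity, at least one of the bootstrap estimates in the definition of $T(\alpha^+)$ is saturated at $t = T(\alpha^+)$. Lemma~\ref{bosstrap-estimate} shows that the estimates \eqref{p-estimate}, \eqref{y-estimate}, \eqref{mu-estimate} hold with a strict factor of $1/2$ at $T(\alpha^+)$, and Lemma~\ref{conrol-of-a^-} gives the same for $|\alpha^-(t)|$. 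Hence the only estimate that can be saturated is the one on $\alpha^+$, forcing
\begin{equation*}
|\alpha^+(T(\alpha^+))| = e^{-\delta\sqrt{\omega}|v|T(\alpha^+)}.
\end{equation*}

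The crucial step is to establish transversality of $\alpha^+(t)$ with the exponential barrier at the exit time. Set $N(t) := (\alpha^+(t))^2 - e^{-2\delta\sqrt{\omega}|v|t}$. At a time $T^\star$ where $N(T^\star) = 0$, write $\alpha^+(T^\star) = \sigma\, e^{-\delta\sqrt{\omega}|v|T^\star}$ with $\sigma \in \{-1,+1\}$. Using \eqref{estimation-dt-alpha} together with the already established \eqref{estimation-on-v-2}, one gets $\alpha^+{}'(T^\star) = -e_\omega \alpha^+(T^\star) + O(e^{-2\delta\sqrt{\omega}|v|T^\star})$, and hence
\begin{equation*}
N'(T^\star) = 2 e^{-2\delta\sqrt{\omega}|v|T^\star}\bigl(\delta\sqrt{\omega}|v| - e_\omega\bigr) + O(e^{-3\delta\sqrt{\omega}|v|T^\star}).
\end{equation*}
Choosing $\delta$ small so that $\delta\sqrt{\omega}|v| < e_\omega/2$ and $T_0$ large enough to absorb the remainder gives $N'(T^\star) < 0$, so $N$ is strictly decreasing across $T^\star$; in particular $N(t) > 0$ for $t$ slightly smaller than $T^\star$. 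This confirms that the trajectory exits the admissible tube transversally going backward in time, which is the main obstacle and where the precise smallness of $\delta$ really enters.

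With transversality in hand, the exit time $\alpha^+ \mapsto T(\alpha^+)$ depends continuously on the parameter $\alpha^+$ by the implicit function theorem applied to $N$, combined with the continuous dependence of $u$ on $\alpha^+$ (from Lemma~\ref{modulatedfinaldata} and the well-posedness in $H^1_0(\Omega)$). Define
\begin{equation*}
\Phi : I \longrightarrow \{-1, +1\}, \qquad \Phi(\alpha^+) := \alpha^+\bigl(T(\alpha^+)\bigr)\, e^{\delta\sqrt{\omega}|v|T(\alpha^+)}.
\end{equation*}
By the previous paragraph $\Phi$ is well defined under the contradiction hypothesis and takes values in $\{-1,+1\}$; by transversality it is continuous on $I$.

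Finally, evaluate $\Phi$ on $\partial I$. For $\alpha^+ = \pm e^{-\delta\sqrt{\omega}|v|T_n}$ the initial condition at $T_n$ already saturates the $\alpha^+$-barrier, so $T(\alpha^+) = T_n$ and
\begin{equation*}
\Phi(\pm e^{-\delta\sqrt{\omega}|v|T_n}) = \pm 1.
\end{equation*}
Thus $\Phi$ is a continuous map from the connected interval $I$ onto the disconnected two-point set $\{-1,+1\}$, which is impossible. (This is exactly the one-dimensional incarnation of the Brouwer-type argument mentioned in the introduction.) Consequently there must exist $\alpha^+ \in I$ with $T(\alpha^+) = T_0$, which is the desired conclusion and completes the proof of Proposition~\ref{mainprop}.
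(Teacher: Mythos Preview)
Your proof is correct and follows essentially the same route as the paper: contradiction, saturation of the $\alpha^+$-barrier at $T(\alpha^+)$ via Lemmas~\ref{bosstrap-estimate} and~\ref{conrol-of-a^-}, transversality from \eqref{estimation-dt-alpha} under $\delta\sqrt{\omega}|v|<e_\omega$, continuity of $\alpha^+\mapsto T(\alpha^+)$, and a topological obstruction. The only cosmetic difference is that you phrase the last step as a connectedness argument (no continuous map from an interval onto $\{-1,+1\}$), whereas the paper states it as Brouwer's no-retraction theorem for the map $\alpha^+\mapsto e^{-\delta\sqrt{\omega}|v|(T_n-T(\alpha^+))}\alpha^+(T(\alpha^+))$; in one dimension these are the same statement.
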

\begin{proof}{We argue by contradiction.}\\
Assume that, $\forall \, \alpha^{+} $ such that 
$\left| \alpha^{+} \right| \leq e^{-\delta \sqrt \omega |v| t }$, one has $T(\alpha^{+})>T_0$.\\
From Lemma \ref{bosstrap-estimate} and \ref{conrol-of-a^-} we have 
\begin{align*}
    \left\|u(T(\alpha^{+})) - R(T(\alpha^{+})) \right\|_{H^1_0} \leq \frac{\varepsilon}{2} \\
    \left\| r (T(\alpha^{+})) \right\|_{H^1_0} \leq \frac{M}{2} e^{-\delta \sqrt \omega |v| T(\alpha^{+}) } \\
    \left| y(T(\alpha^{+})) \right| + \left| \mu(T(\alpha^{+})) \right| \leq \frac{M'}{2} e^{-\delta \sqrt \omega |v| T(\alpha^{+}) } \\
    \left| \alpha^{-}(T(\alpha^{+})) \right| \leq \frac{1}{2} e^{-\delta \sqrt \omega |v| T(\alpha^{+}) } .
 \end{align*}

By the definition of $T( \alpha^{+})$ and the continuity of the flow, one must have $$\left|          \alpha^{+}(T(\alpha^{+})) \right| = e^{-\delta \sqrt \omega |v| T( \alpha^{+})  }   . $$
Let $T<T(\alpha^{+})$ be close enough to $T(\alpha^{+})$ so that the solution $u(t)$ and its modulation are well-defined on $[T,T_n].$ \\
 For $t\in [T,T_n]$, let   $ \hspace{1.0 cm} \mathcal{N}(\alpha^{+}(t))=\mathcal{N}(t)= \left| e^{\delta \sqrt \omega |v| t } \alpha^{+}(t) \right|^{2} .$ \\
 \begin{equation}
\label{dt-N}
     \frac{d}{dt}\mathcal{N}(t) = e^{2\delta \sqrt \omega |v| t } \left[ 2\delta \sqrt \omega |v| \; \alpha^{+}(t) + 2 \frac{d}{dt} \alpha^{+}(t) \, \right] \alpha^{+}(t)
 \end{equation}
 Multiply by $ 2 \left| \alpha^{+}(t) \right| $ the estimate \eqref{estimation-dt-alpha}, we obtain 
 $$ \left| 2 \alpha^{+}(t) \frac{d}{dt} \alpha^{+}(t) + 2e_\omega \alpha^{+}(t)^2  \right| \leq C \left|\alpha^{+}(t) \right| \left( \left\| h(t) \right\|_{H^1_0}^3 + e^{-2\delta \sqrt \omega |v| t } \right) , $$
 which yieds $$  \frac{d}{dt} \left| \alpha(t)\right|^2 + 2e_\omega \left| \alpha(t) \right|^2\leq C \left|\alpha^{+}(t) \right| \left( \left\| h(t) \right\|_{H^1_0}^3 + e^{-2\delta \sqrt \omega |v| t } \right) $$ 
By \eqref{dt-N}, it follows that 
\begin{equation*}
\frac{d}{dt}\mathcal{N}(t) = e^{2\delta \sqrt \omega |v| t } [2 \delta \sqrt \omega |v| - 2e_\omega  ] | \alpha^{+}(t)|^{2} +  O \left(e^{2 \delta \sqrt \omega |v|t} \left| \alpha^{+}(t)\right| ( \left\| h(t) \right\|_{H^1_0}^3 + e^{-2\delta \sqrt \omega |v| t }) \right) .
\end{equation*}
Due to $\eqref{estimation-on-v-2}$ we have $$ e^{2 \delta \sqrt \omega |v| t } \left| \alpha^{+}(t) \right| ( \left\| h(t) \right\|_{H^1_0}^3 + e^{-2 \delta \sqrt \omega |v| t } ) \leq C  \sqrt{\mathcal{N}(t)} \left(  \frac{M^3}{8} e^{-2 \delta \sqrt{\omega} \left| v \right| t  }  + e^{- \delta \sqrt{\omega } \left| v \right| t  }\right) .$$
Let $\delta >0$ such that $2e_\omega - 2 \delta \sqrt \omega |v| \geq e_\omega,$ so that $$\frac{d}{dt}\mathcal{N}(t) \leq -e_\omega \mathcal{N}(t) +  \left( C_5 \frac{M^3}{8} e^{-2 \delta \sqrt{\omega} \left| v \right| t  }  + C_6 e^{- \delta \sqrt{\omega } \left| v \right| t  }\right) \sqrt{\mathcal{N}(t)}  . $$
We consider the above estimate at $t=T(\alpha^{+}) \geq T_0$, so large such that \begin{align}
    \label{4-cond-M}
    C_5\frac{M^3}{8} e^{- 2 \delta \sqrt{\omega} \left| v \right| T_0 } \leq \frac{1}{4} e_\omega , \\
    \label{2-cond-T_0}
    C_6 e^{-\delta \sqrt{\omega} \left|v \right| T_0 } \leq \frac{1}{4} e_\omega .
\end{align}

Using that $\mathcal{N}(T(\alpha^{+}))= 1$, we get 
\begin{equation}
\label{dN-dt}
 \forall \alpha^{+} \in B(e^{-\delta \sqrt \omega |v| T_n} ), \qquad  \frac{d}{dt}\mathcal{N}(T(\alpha^{+})) \leq  - \frac{1}{2} e_\omega.
\end{equation}
From \eqref{dN-dt}, a standard argument says that the map: $\alpha^{+} \longmapsto T(\alpha^{+})$ is continuous.\\

Indeed, by $\eqref{dN-dt}$, $\forall \varepsilon > 0, \, \exists \eta >0$ such that $$\mathcal{N}(T(\alpha^{+})-\varepsilon) > 1 + \eta , $$ and $$ \mathcal{N}(t) < 1 - \eta, \qquad \forall t \in [T(\alpha^{+})+ \varepsilon , T_n] \quad \text{(possibly empty)}.$$ By continuity of the flow of the (NLS) equation, it follows that $\exists \theta > 0$ such that, \\ for all $\left\| \widetilde{\alpha}^{+} - \alpha^{+} \right\| \leq \theta $, the corresponding $\widetilde{ \alpha }^{+}(t)$ satisfies  $$|\mathcal{N}(\widetilde{ \alpha }^{+}(t)) - \mathcal{N}( \alpha^{+}(t))| \leq \frac{\eta}{2}  \qquad \forall t \in [T(\alpha^{+})-\varepsilon,T_n] .$$
In particular, $T(\alpha^{+})- \varepsilon < T(\widetilde{\alpha}^{+}) < T(\alpha^{+}) + \varepsilon.$ \\
Now we consider the continuous map 
\begin{align*}
    P: B_{\R}(e^{-\delta \sqrt \omega |v| T_n})&  \longrightarrow S_{\R}(e^{-\delta \sqrt \omega |v| T_n}) \\
     \alpha^{+} & \longmapsto e^{-\delta \sqrt \omega |v| (T_n-T(\alpha^{+}))} \;
    \alpha^{+}(T(\alpha^{+}))
\end{align*}
Let $\alpha^{+} \in S_{\R}(e^{-\delta \sqrt  \omega |v| T_n})  $, from $\eqref{dN-dt}$ it follows that $T(\alpha^{+})=T_n$ and $P(\alpha^{+})=\alpha^{+}$, which means that $P|_{S_{\R}(e^{-\delta \sqrt \omega |v| T_n})}= Id $.
But this contradicts Brouwer's fixed point theorem.\\
So, $\exists \alpha^{+} \in B_{\R}(e^{-\delta \sqrt \omega |v| T_n})$ such that $T(\alpha^{+})=T_0$.
\end{proof}
\subsection{Estimate on the modulation parameters}
\label{Estimate on the modulation parameters}
\begin{proof} This section is devoted to the proof of the Lemma \ref{bosstrap-estimate}.
For that, we claim the following results which will be proved at the end of the proof.\\
Let us recall that  $\widetilde{R}(t,x)=e^{i \widetilde{\varphi} (t,x)} \widetilde{Q}_{\omega}(t,x) \Psi(x) .$
\begin{clm}
\label{claim-dt-E(r)}
\begin{equation}
\label{d-dt-E(r)} 
\left| \frac{d}{dt} \left( E(\widetilde{R}(t))+(\frac{\omega}{2} + \frac{|v|^2}{8} )M(\widetilde{R}(t)) - \frac{v}{2} P(\widetilde{R}(t)) \right)    \right| \leq C e^{-2\delta \sqrt \omega |v| t} + M^2e^{-3\delta \sqrt{\omega} \left| v \right| t } .
\end{equation}
\end{clm}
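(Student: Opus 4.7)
The plan is to view
\[
\mathcal{F}(u) := E(u) + \Bigl(\tfrac{\omega}{2} + \tfrac{|v|^2}{8}\Bigr) M(u) - \tfrac{v}{2}\cdot P(u)
\]
as a Lyapunov functional for the Galilean-boosted ground state: by the symmetries (translation, Galilean boost, phase rotation) together with the elliptic equation \eqref{eq_Q}, the free soliton $e^{i\varphi(t,x)}Q_\omega(x-tv)$ is a critical point of $\mathcal{F}$, so $\mathcal{F}(R(t))$ is constant whenever $\Psi\equiv 1$. Consequently the only contributions to $\frac{d}{dt}\mathcal{F}(\widetilde R)$ are (a) boundary terms produced by the cutoff $\Psi$, supported in $\{\Psi\neq 1\}\cup\mathrm{supp}\,\nabla\Psi$ (a fixed compact neighborhood of $\Theta$), where $\widetilde Q_\omega(t,x)=Q_\omega(x-tv-y(t))$ is exponentially small by \eqref{eq_QQ} because the soliton center $tv+y$ has moved to distance of order $|v|t$, and (b) modulation errors proportional to $\dot y(t)$ and $\dot\mu(t)$, for which \eqref{estimation-en-mu-et-y} combined with the bootstrap $\|h(t)\|_{H^1_0}\leq Me^{-\delta\sqrt\omega|v|t}$ gives $|\dot y|+|\dot\mu|\leq C(1+M^2)e^{-2\delta\sqrt\omega|v|t}$.

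The first step is the chain rule
\[
\tfrac{d}{dt}\mathcal{F}(\widetilde R) = \bigl(\mathcal{F}'(\widetilde R),\;\partial_t \widetilde R\bigr),\qquad \mathcal{F}'(u) := -\Delta u + \Bigl(\omega+\tfrac{|v|^2}{4}\Bigr)u + iv\cdot\nabla u - |u|^{p-1}u.
\]
Substituting $\widetilde R=e^{i\widetilde\varphi}\widetilde Q_\omega\Psi$ and using $\nabla\widetilde\varphi=v/2$, $\Delta\widetilde\varphi=0$, the Galilean and phase contributions cancel exactly as for the free boosted soliton; then the elliptic equation $-\Delta\widetilde Q_\omega+\omega\widetilde Q_\omega=\widetilde Q_\omega^{\,p}$ leaves
\[
\mathcal{F}'(\widetilde R) \;=\; e^{i\widetilde\varphi}\Bigl[\widetilde Q_\omega^{\,p}\,\Psi(1-\Psi^{p-1}) \;-\; 2\nabla\widetilde Q_\omega\cdot\nabla\Psi \;-\; \widetilde Q_\omega\,\Delta\Psi\Bigr].
\]
Every term on the right is supported where $\Psi$ varies; since $|tv|\gg|y(t)|$ for $t\geq T_0$ large, \eqref{eq_QQ} gives $\|\mathcal{F}'(\widetilde R)\|_{L^1}+\|\mathcal{F}'(\widetilde R)\|_{L^2}\leq Ce^{-\delta\sqrt\omega|v|t}$ for suitable $\delta$.

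Next I would split
\[
\partial_t\widetilde R = e^{i\widetilde\varphi}\Bigl[i\bigl(\omega-\tfrac{|v|^2}{4}\bigr)\widetilde Q_\omega\Psi - v\cdot\nabla\widetilde Q_\omega\,\Psi\Bigr] + e^{i\widetilde\varphi}\Bigl[i\dot\mu(t)\widetilde Q_\omega\Psi - \dot y(t)\cdot\nabla\widetilde Q_\omega\,\Psi\Bigr],
\]
into a symmetry part and a modulation part. Pairing the symmetry part with $\mathcal{F}'(\widetilde R)$, the phases $e^{i\widetilde\varphi}$ cancel and the purely imaginary contribution drops out upon taking the real part, leaving an integral of products of real functions ($\widetilde Q_\omega$ and $\nabla\widetilde Q_\omega$ against $1-\Psi^{p-1}$, $\nabla\Psi$, $\Delta\Psi$), all supported where $\widetilde Q_\omega$ is exponentially small; this produces a bound of order $Ce^{-2\delta\sqrt\omega|v|t}$. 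Pairing with the modulation part, Cauchy--Schwarz combined with the $L^2$ bound on $\mathcal{F}'(\widetilde R)$ and with $|\dot y|+|\dot\mu|\leq C(1+M^2)e^{-2\delta\sqrt\omega|v|t}$ gives a bound of order $C(1+M^2)e^{-3\delta\sqrt\omega|v|t}$; the constant $1$ is then absorbed into the previous term to reach the stated form.

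The main obstacle in my view is the algebraic bookkeeping: one must verify that the quadratic Galilean phase $\tfrac12 x\cdot v-\tfrac{|v|^2}{4}t$ in $\widetilde\varphi$, the coefficient $\omega+|v|^2/4$, and the drift $iv\cdot\nabla$ inside $\mathcal{F}'$ conspire so that the only surviving piece of $\mathcal{F}'(\widetilde R)$ is the commutator of the linear elliptic operator with the cutoff $\Psi$. Once that cancellation is established, both estimates reduce to the exponential decay of $Q_\omega$ at distance of order $|v|t$ and to the a priori control of $\dot y,\dot\mu$ already provided by Lemma \ref{equation-de-h+dy+da+}.
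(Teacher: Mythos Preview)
Your argument is correct. The variational identity
\[
\tfrac{d}{dt}\mathcal{F}(\widetilde R)=\bigl(\mathcal{F}'(\widetilde R),\partial_t\widetilde R\bigr),\qquad
\mathcal{F}'(\widetilde R)=e^{i\widetilde\varphi}\Bigl[\widetilde Q_\omega^{\,p}\Psi(1-\Psi^{p-1})-2\nabla\widetilde Q_\omega\cdot\nabla\Psi-\widetilde Q_\omega\Delta\Psi\Bigr],
\]
is exactly right (the Galilean phase, the coefficient $\omega+|v|^2/4$ and the drift $iv\cdot\nabla$ do cancel as you claim), and the splitting of $\partial_t\widetilde R$ into a symmetry part and a modulation part then gives the stated bound after using \eqref{estimation-en-mu-et-y} and the bootstrap on $\|h\|_{H^1_0}$. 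One harmless over-count: the $i\dot\mu\,\widetilde Q_\omega\Psi$ piece actually contributes nothing, since after cancelling $e^{i\widetilde\varphi}$ the factor $\mathcal{F}'(\widetilde R)$ is real-valued and the pairing is a real part.

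This is the same computation as the paper's, but packaged differently. The paper differentiates $E(\widetilde R)$, $M(\widetilde R)$, $P(\widetilde R)$ separately, adds them with the right coefficients, and then relies on integration by parts / radiality of $Q_\omega$ to see that the surviving terms all carry a factor of $\nabla\Psi$, $\Delta\Psi$ or $\Psi^{p-1}-1$. Your route folds all three pieces into the single functional $\mathcal{F}$ and lets the critical-point identity for the boosted soliton do the cancellation at the level of $\mathcal{F}'(\widetilde R)$; the localisation to $\{\Psi\neq 1\}$ then appears immediately rather than after term-by-term bookkeeping. The inputs are identical (elliptic equation \eqref{eq_Q}, decay \eqref{eq_QQ}, compact support of the cutoff terms, and the control \eqref{estimation-en-mu-et-y} on $\dot y,\dot\mu$); what your organisation buys is that no separate ``some terms with $\Psi$ do not have compact support and one must use radiality'' step is needed.
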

\begin{clm}
\label{clm-E(u)-E(r)}
\begin{multline}
\label{E(u)-E(r)}
\bigg| \left[ E(u(t)) + (\frac{\omega}{2}+\frac{\left|v\right |^2}{8}) M(u(t))-  \frac{v}{2}P(u(t)) \right] -  \left[ E(\widetilde{R}(t))+ (\frac{\omega}{2} + \frac{|v|^2}{8})M(\widetilde{R}(t)) - \frac{v}{2}P(\widetilde{R}(t)) \right] \\ - \frac{1}{2} \left[(\widetilde{L}^{+}_{\omega}h_1(t),h_1(t))+(\widetilde{L}^{-}_{\omega}h_2(t),h_2(t)) \right]   \bigg| \leq C M e^{-2\delta \sqrt \omega |v| t } + C M^2 e^{-3\delta \sqrt{\omega} \left|v \right| t} . 
\end{multline}
\end{clm}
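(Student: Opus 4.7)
The plan is to rewrite the dynamical functional
$$F(u) := E(u) + \left(\tfrac{\omega}{2} + \tfrac{|v|^2}{8}\right) M(u) - \tfrac{v}{2} \cdot P(u), \qquad P(u) = \im\int \bar u \nabla u,$$
as a ``stationary'' energy in $\widetilde{Q}_\omega\Psi + h$. Using $u = e^{i\widetilde{\varphi}}(\widetilde{Q}_\omega\Psi + h)$ with $\nabla \widetilde{\varphi} = v/2$, direct computation yields
$$|\nabla u|^2 = \tfrac{|v|^2}{4}|\widetilde{Q}_\omega\Psi + h|^2 + |\nabla(\widetilde{Q}_\omega\Psi + h)|^2 + v\cdot \im\big[(\overline{\widetilde{Q}_\omega\Psi + h})\nabla(\widetilde{Q}_\omega\Psi + h)\big],$$
$$\im(\bar u \nabla u) = \tfrac{v}{2}|\widetilde{Q}_\omega\Psi + h|^2 + \im\big[(\overline{\widetilde{Q}_\omega\Psi + h})\nabla(\widetilde{Q}_\omega\Psi + h)\big].$$
Substituting into $F(u)$, the kinetic and Galilean boost contributions cancel exactly and one obtains
$$F(u) = E_\omega(\widetilde{Q}_\omega\Psi + h), \qquad E_\omega(\psi) := \tfrac{1}{2}\|\nabla\psi\|_{L^2}^2 + \tfrac{\omega}{2}\|\psi\|_{L^2}^2 - \tfrac{1}{p+1}\|\psi\|_{L^{p+1}}^{p+1}.$$
The same identity with $h = 0$ gives $F(\widetilde{R}) = E_\omega(\widetilde{Q}_\omega\Psi)$, so the claim reduces to a Taylor expansion of $E_\omega$ around $\widetilde{Q}_\omega\Psi$ in the direction $h$.

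I would then expand $E_\omega(\widetilde{Q}_\omega\Psi + h) - E_\omega(\widetilde{Q}_\omega\Psi)$ to second order in $h$. The first-order term is $\re\int [-\Delta(\widetilde{Q}_\omega\Psi) + \omega\widetilde{Q}_\omega\Psi - (\widetilde{Q}_\omega\Psi)^p]\,\bar h$; using the elliptic equation satisfied by $\widetilde{Q}_\omega$, the bracket simplifies to
$$\widetilde{Q}_\omega^p\,\Psi(1-\Psi^{p-1}) - 2\nabla\widetilde{Q}_\omega\cdot\nabla\Psi - \widetilde{Q}_\omega\Delta\Psi,$$
which is supported in the compact region $\{\Psi \neq 1\}\cup\{\nabla\Psi \neq 0\}$ near $\Theta$. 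Since $\widetilde{Q}_\omega$ is centered at $tv + y(t)$, on this support $|x - tv - y(t)| \gtrsim |v|t$, so the exponential decay \eqref{eq_QQ} bounds the $L^2$-norm of the bracket by $Ce^{-\delta\sqrt{\omega}|v|t}$; pairing with $\|h\|_{L^2}\le Me^{-\delta\sqrt{\omega}|v|t}$ produces the $CMe^{-2\delta\sqrt{\omega}|v|t}$ contribution. The quadratic term is $\tfrac{1}{2}\int|\nabla h|^2 + \omega |h|^2 - (\widetilde{Q}_\omega\Psi)^{p-1}(ph_1^2 + h_2^2)$, and it differs from $\tfrac{1}{2}\big[(\widetilde{L}_\omega^+ h_1,h_1) + (\widetilde{L}_\omega^- h_2,h_2)\big]$ by $\tfrac{1}{2}\int \widetilde{Q}_\omega^{p-1}(1-\Psi^{p-1})(ph_1^2 + h_2^2)$, once again localized where $\widetilde{Q}_\omega$ is exponentially small, hence bounded by $Ce^{-(p-1)\delta\sqrt{\omega}|v|t}\|h\|_{L^2}^2 \le CM^2 e^{-(p+1)\delta\sqrt{\omega}|v|t}$ and absorbed into $CM^2 e^{-3\delta\sqrt{\omega}|v|t}$. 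The Taylor remainder of the nonlinearity is bounded by $C(\|\widetilde{Q}_\omega\Psi\|_\infty^{p-2}\|h\|_{L^3}^3 + \|h\|_{L^{p+1}}^{p+1}) \le C\|h\|_{H^1_0}^3$ via Sobolev embedding (since $p+1 \le 6$ in $\R^3$), which is $\le CM^3 e^{-3\delta\sqrt{\omega}|v|t} \le CM^2 e^{-3\delta\sqrt{\omega}|v|t}$ after absorbing one factor of $M$ into the constant.

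The main obstacle lies in the first step: reducing $F(u)$ to the stationary energy $E_\omega(\widetilde{Q}_\omega\Psi + h)$ requires careful bookkeeping of the cross terms generated by the plane-wave phase $e^{i\widetilde{\varphi}}$ and the boost velocity $v$, and one must check that the Dirichlet boundary condition $\widetilde{Q}_\omega\Psi + h \in H^1_0(\Omega)$ rules out all boundary contributions from integration by parts at $\partial\Omega$. Once this algebraic reduction is established, the rest of the proof is a routine Taylor expansion that exploits the exponential separation, growing like $|v|t$, between the support of $1-\Psi$ (and $\nabla\Psi$) and the center $tv + y(t)$ of the translated ground state.
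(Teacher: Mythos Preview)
Your proof is correct and reaches the same conclusion as the paper, but the organization is different and, in fact, cleaner. The paper expands $E(u)-E(\widetilde R)$, $M(u)-M(\widetilde R)$, and $P(u)-P(\widetilde R)$ separately (writing out all the $v$-dependent cross terms coming from $\nabla e^{i\widetilde\varphi}=\tfrac{iv}{2}e^{i\widetilde\varphi}$), then adds them up and watches the boost terms cancel line by line before isolating the linearized operator and the $\Psi$-localized errors. You instead perform the Galilean cancellation once and for all via the exact identity $F(u)=E_\omega(e^{-i\widetilde\varphi}u)$, and then do a single Taylor expansion of $E_\omega$ at $\widetilde Q_\omega\Psi$. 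The two routes are algebraically equivalent; yours makes the structure (why only the stationary functional $E_\omega$ matters) transparent from the start, while the paper's makes each cancellation explicit.

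One small remark: the cubic Taylor remainder gives $C\|h\|_{H^1_0}^3\le CM^3e^{-3\delta\sqrt\omega|v|t}$, not $CM^2e^{-3\delta\sqrt\omega|v|t}$. Saying you ``absorb one factor of $M$ into $C$'' makes $C$ depend on $M$, which is harmless for the downstream bootstrap (where only the exponential rate matters and $T_0$ is chosen after $M$), but it is slightly at odds with the stated form of the bound. The paper has exactly the same loose end: its higher-order remainder $\beta(t,x)$ is carried through the computation but never explicitly estimated, and would likewise contribute an $M^3$ term.
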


\begin{clm}
\label{estima-v-3}
There exists $ C >0 $ such that,
\begin{equation}
\label{trans-coercivity}
\left\|h(t)\right\|_{H^1_0}^2 \leq  C \bigg[\left(\widetilde{L}^{+}_{\omega}h_1(t),h_1(t)\right)+ \left(\widetilde{L}^{-}_{\omega}h_2(t),h_2(t)\right)+  \left(\alpha^{\pm}(t)\right)^2 + M^2 e^{-4\delta \sqrt{\omega}|v| t}\bigg]
\end{equation}
\end{clm}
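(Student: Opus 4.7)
The strategy is to apply the $\omega$-scaled coercivity estimate of Lemma \ref{spectalprop} to the translated function $\tilde h(x) := h(t, x + tv + y(t))$, extended by zero from $H^1_0(\Omega)$ to $H^1(\R^3)$ (as permitted by the remark following Lemma \ref{spectalprop}). Under this change of variable, $\widetilde L^\pm_\omega$ becomes $L^\pm_\omega$ and $\widetilde{\mathcal Y}^\pm_\omega(t,\cdot)$ becomes $\mathcal Y^\pm_\omega$, so Lemma \ref{spectalprop} yields
\begin{equation*}
\|h\|_{H^1_0}^2 \leq C\Bigl[(\widetilde L^+_\omega h_1, h_1) + (\widetilde L^-_\omega h_2, h_2) + \sum_{j=1}^3 A_j^2 + B^2 + (D^+)^2 + (D^-)^2\Bigr],
\end{equation*}
with $A_j := \int h_1\,\partial_{x_j}\widetilde Q_\omega\,dx$, $B := \int h_2\,\widetilde Q_\omega\,dx$, and $D^\pm := \im\int \bar h\,\widetilde{\mathcal Y}^\mp_\omega\,dx$. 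The task reduces to showing that $|A_j|$, $|B|$ and $|D^\pm - \alpha^\pm|$ are each $O(Me^{-2\delta\sqrt\omega|v|t})$, so that squaring them fits within the right-hand side of \eqref{trans-coercivity}.

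\textbf{Cutoff estimates.} By the orthogonality conditions in Corollary \ref{modulated-u(t)}, $\int h_1\,\partial_{x_j}\widetilde Q_\omega\,\Psi\,dx = 0$ and $\int h_2\,\widetilde Q_\omega\,\Psi\,dx = 0$, whence
\begin{equation*}
A_j = \int h_1\,\partial_{x_j}\widetilde Q_\omega\,(1-\Psi)\,dx, \qquad B = \int h_2\,\widetilde Q_\omega\,(1-\Psi)\,dx.
\end{equation*}
Now $1-\Psi$ is supported in a bounded neighbourhood of $\Theta$, while $\widetilde Q_\omega(t,\cdot)$ is centered at $tv+y(t)$; the bootstrap bound \eqref{y-estimate} gives $|y(t)| \leq M'e^{-\delta\sqrt\omega|v|t}$, so for $T_0$ large one has $|x - tv - y(t)| \geq \tfrac{1}{2}|v|t$ on the support of $1-\Psi$, and \eqref{eq_QQ} yields $|\widetilde Q_\omega(t,x)| + |\nabla\widetilde Q_\omega(t,x)| \lesssim e^{-\delta\sqrt\omega|v|t}$ there. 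Combined with Cauchy-Schwarz and the bootstrap $\|h\|_{L^2} \leq \|r\|_{H^1_0} \leq Me^{-\delta\sqrt\omega|v|t}$ from \eqref{p-estimate}, this gives $|A_j| + |B| \lesssim Me^{-2\delta\sqrt\omega|v|t}$. Since by definition $\alpha^\pm(t) = \im\int\bar h\,\widetilde{\mathcal Y}^\mp_\omega\,\Psi\,dx$, the identical argument with $\widetilde{\mathcal Y}^\mp_\omega$ in place of $\widetilde Q_\omega$, using the exponential decay of $\mathcal Y^\pm_\omega$ from Proposition \ref{eigen-val-funct-L} instead of \eqref{eq_QQ}, yields $|D^\pm - \alpha^\pm| \lesssim Me^{-2\delta\sqrt\omega|v|t}$. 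Squaring and using $(a+b)^2 \leq 2a^2+2b^2$ produces \eqref{trans-coercivity}.

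\textbf{Main obstacle.} Modulo the translation and zero-extension reductions, the proof is essentially a bookkeeping exercise. The only delicate point is a uniform choice of constants: the $\delta$ in the exponential decay \eqref{eq_QQ} of $Q_\omega$, the analogous constant for $\mathcal Y^\pm_\omega$, and the $\delta$ driving the bootstrap regime should all be taken equal to a common sufficiently small value, so that the cutoff errors exactly fit within the $M^2 e^{-4\delta\sqrt\omega|v|t}$ budget on the right-hand side of \eqref{trans-coercivity}.
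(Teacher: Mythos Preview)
Your proof is correct and follows essentially the same route as the paper: apply the (translated, $\omega$-scaled) coercivity Lemma~\ref{spectalprop}, then use the orthogonality conditions from Corollary~\ref{modulated-u(t)} to rewrite the residual inner products $A_j$, $B$, $D^\pm-\alpha^\pm$ as integrals against $(1-\Psi)$, and finally exploit the compact support of $1-\Psi$ together with the exponential decay of $Q_\omega$, $\mathcal Y^\pm_\omega$ and the bootstrap bound \eqref{p-estimate} on $\|h\|_{L^2}$. Your discussion of the translation/zero-extension and of the common choice of $\delta$ is somewhat more explicit than the paper's, but the argument is the same.
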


Now, we start the proof of Lemma \ref{bosstrap-estimate}. Let $t\in [T(\alpha^{+}),T_n]$, integrating $\eqref{d-dt-E(r)}$ on $[t,T_n]$ we~get 
\begin{multline*}
\bigg| \left[E(\widetilde{R}(T_n))+(\frac{\omega}{2}+\frac{|v|^2}{8}) M(\widetilde{R}(T_n))-\frac{v}{2}P(\widetilde{R}(T_n))\right]  -\left[ E(\widetilde{R}(t)) +(\frac{\omega}{2}+\frac{|v|^2}{8}) M(\widetilde{R}(t)) - \frac{v}{2} P(\widetilde{R}(t))  \right]  \bigg| \\ \leq C e^{-2\delta \sqrt \omega |v| t} + M^2e^{-3\delta \sqrt{\omega} \left| v \right| t } .
\end{multline*} 
From the above estimate and  $\eqref{E(u)-E(r)}$, we have  
\begin{multline}
\label{estim-on-L(T_n)-L(t)}
\left| \bigg[ \left(\widetilde{L}^{+}_{\omega}h_1(T_n),h_1(T_n) \right)   + \left(\widetilde{L}^{-}_{\omega}h_2(T_n),h_2(T_n)  \right) \bigg]  - \bigg[ \left( \widetilde{L}^{+}_{\omega}h_1(t),h_1(t) \right) + \left(\widetilde{L}^{-}_{\omega}h_2(t),h_2(t) \right) \bigg]\right| \\  \leq C M e^{-2\delta \sqrt \omega |v| t} + C M^2 e^{-3 \delta \sqrt \omega |v| t}. 
\end{multline}
From Lemma \ref{modulation} and Lemma \ref{modulatedfinaldata} we have 
\begin{equation}
\label{estim-on-L(T_n)}
\left| \left(\widetilde{L}^{+}_{\omega} h_1(T_n),h_1(T_n) \right)   + \left(\widetilde{L}^{-}_{\omega}h_2(T_n),h_2(T_n)  \right) \right| \leq C \left\|h(T_n) \right\|_{H^1_0}^2 \leq C \left| \lambda \right|^2
\leq C e^{-2 \delta \sqrt \omega |v| t} .
\end{equation} 

We deduce from \eqref{estim-on-L(T_n)-L(t)}, \eqref{estim-on-L(T_n)} and the Claim \ref{estima-v-3} that  
\begin{align*}
\left\|h(t)\right\|_{H^1_0}^2 &\leq C (\widetilde{L}^{+}_{\omega}h_1(t),h_1(t)) + C (\widetilde{L}^{-}_{\omega}h_2(t),h_2(t)) + C \left(\alpha^{\pm}(t)\right)^2 + CM^2e^{-4\delta \sqrt{\omega} |v| t} \\ &\leq C_7 M e^{-2\delta \sqrt \omega |v| t} + C_8 M^2 e^{-3 \delta \sqrt \omega |v| t} .
\end{align*}

If $T_0$ satisfies \begin{align}
\label{5-condi-M}
C_7e^{-2\delta \sqrt{\omega } \left| v \right| T_0} \leq \frac{1}{4},   \\
\label{3-cond-T_0}
C_8 M e^{-3\delta \sqrt{\omega } \left| v \right| T_0} \leq \frac{1}{4}.
\end{align}

Then, we have provide $$ \left\|h(t)\right\|_{H^1_0} \leq \frac{M}{2} e^{- \delta \sqrt \omega |v| t }.$$
If conditions \eqref{1+2-condi-M}, \eqref{3-condi-M}, \eqref{1-cond-T_0},  \eqref{2-cond-T_0}, \eqref{4-cond-M}, \eqref{5-condi-M} and \eqref{3-cond-T_0} on $M$ and $T_0$ hold. However it is easy to find $T_0$ and $M$ satisfying these conditions. We take $T_0$ large enough such that
\begin{equation}
\max(C_4,C_6,C_7) e^{-\delta \sqrt{\omega} \left| v \right|  T_0 } \leq \frac{1}{4}\min(1,e_\omega), 
\end{equation}
and we take $M$ such that
\begin{align}
 M e^{-\delta \sqrt{\omega}  \left|  v \right| T_0  }  &\leq \frac{1}{10 C^{'}} \min{ \left( \left\| \partial_{x_j} Q \Psi  \right\|^2_{L^2},  \left\| Q \Psi \right\|^2_{L^2} \right)}, \\
\max(  C_3, C_5)  \frac{M^3}{8} e^{-2\delta \sqrt{\omega }  \left| v \right| T_0 } &\leq  \frac{1}{4}  \min(1,e_\omega).  \\
C_8 M e^{-\delta \sqrt{\omega} \left| v \right| T_0} &\leq \frac{1}{4}
\end{align}

From Lemma $\ref{equation-de-h+dy+da+}$  we have\begin{align*}
    \left|\frac{d\mu(t)}{dt}\right|+\left|\frac{dy(t)}{dt}\right| &\leq C \left\|h(t) \right\|_{H^1_0}^2 + Ce^{-2\delta \sqrt{\omega} |v| t } \\ &\leq C \frac{M^2}{4}e^{-2\delta \sqrt{\omega} \left|v\right| t} + C e^{-2 \delta \sqrt{\omega} \left|v\right| t } .
\end{align*}
We integrate the above estimate on some time interval $[t,T_n]$, for $t \in [T(\alpha^{+}),T_n]$.
$$\left| \mu(t) \right|+\left|y(t)\right| \leq \left|\mu(T_n) \right|+ \left|y(T_n)\right| + C \frac{M^2}{4} e^{-2\delta \sqrt{\omega} \left|v\right| t } + C e^{-2\delta \sqrt{\omega} |v| t }. $$ 
\vspace{-0.05cm}
Furthermore, due to the definition of $T(\alpha^{+})$ we get  $$\left| \mu(t) \right| + \left| y(t) \right| \leq C'_{1} e^{-2 \delta \sqrt \omega |v| t } + C'_{2} \frac{M^2}{4} e^{-2\delta \sqrt{\omega} \left|v\right| t } . 
 $$
 Then, we can deduce that $\qquad \left|\mu(t)\right|+ \left|y(t)\right| \leq \frac{M'}{2} e^{-\delta \sqrt \omega |v| t}. $ \\ 

Provided, for $T_0$ large enough
\begin{equation}
\label{condi-T_0-M'}
   C'_{1} e^{-\delta \sqrt{\omega}\left|v\right| T_0 } \leq \frac{M'}{4} ,
 \end{equation}
 
 and we take $M'$ such that
\begin{equation}
   \label{Condi-M-M'}
    C'_{2} \frac{M^2}{4}  e^{-2 \delta \sqrt{\omega} \left| v \right| T_0}    \leq \frac{M'}{4}.
\end{equation}

Finally, we obtain  \begin{align*}
    \left\| u(t)-R(t) \right\|_{H^1_0} &\leq \left\|R(t)-\widetilde{R}
(t)\right\|_{H^1_0}+ \left\| h(t) \right\|_{H^1_0} \\ &\leq C \left|y(t)\right| + \left\|h(t)\right\|_{H^1_0} \\ &\leq C e^{-\delta \sqrt \omega |v| t} \\ &\leq \frac{\varepsilon}{2},
\end{align*}
which concludes the proof of Lemma \ref{bosstrap-estimate}, by taking $T_0$ large enough. 
\end{proof}
\begin{proof}[Proof of Claim $\ref{claim-dt-E(r)}$.]
Recall that  $\widetilde{R}(t,x)=e^{i\widetilde{\varphi}(t,x)} \widetilde{Q}_{\omega}(t,x)\Psi(x).$

\begin{equation*}
\nabla \widetilde{R}(t,x) = [i \frac{v}{2} \widetilde{Q}_{\omega} \Psi + \nabla(\widetilde{Q}_{\omega}\Psi) ] \; e^{i \widetilde{\varphi}(t,x)}, \qquad 
\left| \nabla \widetilde{R}(t,x) \right|^2 = \frac{|v|^2}{4} \widetilde{Q}_{\omega}^2 \Psi^2 + \left| \nabla(\widetilde{Q}_{\omega} \Psi)\right|^2.
\end{equation*}
\begin{align*}
\displaystyle E(\widetilde{R}(t))&= \frac{1}{2} \displaystyle\int \left| \nabla \widetilde{R}(t) \right|^ 2 dx - \frac{1}{p+1} \int \widetilde{Q}_{\omega}^{p+1} \Psi^{p+1} dx, \\ 
\displaystyle\frac{d}{dt} E(\widetilde{R}(t)) &= \frac{1}{2} \frac{d}{dt} \left[ \int \frac{|v|^2}{4} \widetilde{Q}_{\omega}^2 \Psi^2 + \left|\nabla(\widetilde{Q}_{\omega}\Psi)\right|^2 dx - \frac{1}{p+1} \int \widetilde{Q}_{\omega}^{p+1} \Psi^{p+1} dx \right] \\
 &= \frac{1}{2}  \int  \frac{|v|^2}{4}2(-v-\frac{dy(t)}
{dt}) \nabla \widetilde{Q}_{\omega} \, \widetilde{Q}_{\omega} \Psi^2 + 2 (-v-\frac{dy(t)}{dt}). \nabla (\nabla (\widetilde{Q}_{\omega} \Psi)) \nabla(\widetilde{Q}_{\omega}\Psi) dx\\  & -  \frac{1}{p+1} \int (p+1) (-v-\frac{dy(t)}{dt}) \nabla \widetilde{Q}_{\omega} \, \widetilde{Q}_{\omega}^p \Psi^{p+1 }  dx \\ 
 & = (-v-\frac{dy(t)}{dt}) \left[ \int \frac{|v|^2}{4}  \nabla \widetilde{Q}_{\omega}\, \widetilde{Q}_{\omega} \, \Psi^2 + \nabla(\nabla \widetilde{Q}_{\omega} \Psi) \nabla(\widetilde{Q}_{\omega}\Psi) dx - \int \nabla \widetilde{Q}_{\omega}  \, \widetilde{Q}_{\omega}^{p} \, \Psi^{p+1}  \,  \right],  \\
 \text{ where, \; } &(-v-\frac{dy(t)}{dt}).\nabla((\nabla \widetilde{Q}_{\omega} \Psi)) \nabla(\widetilde{Q}_{\omega}\Psi)= \sum_{k=1}^3 \sum_{j=1}^3 \left(-v_k-\frac{dy_k(t)}{dt}\right) \partial_{x_k}\partial_{x_j} (\widetilde{Q}_{\omega} \Psi)  \partial_{x_j}(\widetilde{Q}_{\omega} \Psi). \\ 
 M(\widetilde{R}(t))&=\displaystyle \int \left|\widetilde{R}(t)\right|^2 dx , \\
 \displaystyle\frac{d}{dt} M(\widetilde{R}(t))&= \frac{d}{dt} \displaystyle\int  \widetilde{Q}_{\omega} ^ 2\Psi^2 dx = 2 (-v -\frac{dy(t)}{dt}) \int  \nabla \widetilde{Q}_{\omega} \, \widetilde{Q}_{\omega} \, \Psi^2 dx .\\   
\displaystyle P(\widetilde{R}(t))&= \im \int \nabla \widetilde{R}(t) \overline{\widetilde{R}}(t) dx  ,\\  \displaystyle\frac{d}{dt} P(\widetilde{R}(t)) &=
\frac{d}{dt} \left(\frac{v}{2} \int  \widetilde{Q}_{\omega}^2 \, \Psi^2 dx \right) =  v \int (-v-\frac{dy(t)}{dt} ) \nabla \widetilde{Q}_{\omega} \, \widetilde{Q}_{\omega} \Psi^2 dx.
\end{align*}
Hence, we have
\begin{multline*}
    \frac{d}{dt} \left[ E(\widetilde{R}(t)) + ( \frac{\omega}{2} + \frac{|v|^2}{8})M(\widetilde{R}(t)) -\frac{v}{2} P(\widetilde{R}(t))\right]= \frac{\omega}{2} (-v-\frac{dy(t)}{dt}) \int \nabla \widetilde{Q}_\omega \, \widetilde{Q}_\omega \Psi^2 \\  +(-v-\frac{dy(t)}{dt}) \left[ \int \nabla ( \nabla \widetilde{Q}_{\omega} \, \Psi) \nabla(\widetilde{Q}_{\omega} \Psi) dx - \int \nabla \widetilde{Q}_{\omega} \, \widetilde{Q}_{\omega}^p \, \Psi^{p+1} \right] .
\end{multline*}
For the first integral, we have\begin{align*}
   \left| \int \nabla \widetilde{Q}_\omega \, \widetilde{Q}_\omega  \Psi^2 \right|= \left| \frac{1}{2} \int \nabla \widetilde{Q}_\omega^2 \, \Psi^2 \right| = \left| \int \widetilde{Q}_\omega^2  \nabla \Psi \, \Psi \right| \leq C \, e^{-2 \delta \sqrt{\omega} \left|v\right| t  } .
\end{align*}

Using $\eqref{estimation-en-mu-et-y}$ and the fact that the support of the derivatives of $\Psi$ is compact. Furthermore, in the second integral, we have some terms with $\Psi$ witch doesn't have a compact support. For this terms, we have to use the fact that $Q_\omega$ is a radial function, concluding the proof of the Claim \ref{claim-dt-E(r)}
\end{proof}

\begin{proof}[Proof of Claim \ref{clm-E(u)-E(r)}.]
Recall that,$$u(t,x)=e^{i\widetilde{ \varphi }(t,x)}\left(\widetilde{Q}_{\omega}(t,x)\Psi(x)+h(t,x)\right).$$ \begin{align*}
E(u(t)) &=E(e^{i\widetilde{ \varphi }}\left[\widetilde{Q}_{\omega}\Psi+h\right]) \\ &= \frac{1}{2} \int_{\Omega} \left| \nabla \left(  e^{i\widetilde{ \varphi }}\left(\widetilde{Q}_{\omega}\Psi+h\right) \right) \right|^2 dx - \frac{1}{p+1} \int_{\Omega} \left |\widetilde{Q}_{\omega} \Psi + h \right|^{p+1} dx .
\end{align*}
Using Taylor expansion,
\begin{align*}
    \left| \widetilde{Q}_{\omega}\Psi + h \right|^{p+1}&= \widetilde{Q}_{\omega}^{p+1} \,\Psi^{p+1} + \left(\frac{p+1}{2}\right) \widetilde{Q}_{\omega}^{p} \, \Psi^{p}\, (h + \overline{h} )  \\ &+\frac{1}{2} \left(\frac{p+1}{2}\right)\left(\frac{p-1}{2}\right) \, \widetilde{Q}_{\omega}^{p-1} \, \Psi^{p-1}\, \left( h^ 2+ \overline{h}^2 \right)  +  \left(\frac{p+1}{2}\right)^2 \,\widetilde{Q}_{\omega}^{p-1} \, \Psi^{p-1} h \, \overline{h} + \beta(t,x) .
\end{align*}
and 
\begin{align*}
\left| \nabla \left( e^{i\widetilde{ \varphi }}(\widetilde{Q}_{\omega}\Psi + h)   \right)  \right|^2 &= \left| e^{i\widetilde{ \varphi }} \left(
i \frac{v}{2} (\widetilde{Q}_{\omega} \Psi+h) + (\nabla ( \widetilde{Q}_{\omega} \, \Psi)  + \nabla h  ) \right) \right|^2 \\ 
& =  \frac{|v|^2}{4} \left|\widetilde{Q}_{\omega} \Psi+ h \right|^{2}  -  v \, \nabla(\widetilde{Q}_{\omega} \Psi) h_2 + v \,  \widetilde{Q}_{\omega} \Psi \nabla h_2 + v (h_1 \nabla h_2 - h_2 \nabla h_1) \\ &+ \left| \nabla (\widetilde{Q}_{\omega} \Psi) \right|^2  + 2 \nabla (\widetilde{Q}_{\omega} \Psi) \nabla h_1 + \left|\nabla h\right|^2.
\end{align*}
Here and until the end the proof:  $\int $ denote the integral over $\Omega$. \\ 

We have
\begin{align*}
E(u(t))- E(\widetilde{R}(t)) &=  
\frac{\left|v\right|^2}{4}  \int \widetilde{Q}_{\omega} \Psi h_1 +  \frac{\left|v \right|^2}{8} \int \left|h\right|^2+ \frac{1}{2} \int  \left| \nabla h \right|^2 +  \int \nabla (\widetilde{Q}_{\omega} \Psi).\nabla h_1 - \int \widetilde{Q}_{\omega}^p \Psi^p h_1 \\ &  - \int  v.\nabla(\widetilde{Q}_{\omega} \Psi) h_2 + \int \frac{v}{2}.(h_1 \nabla h_2 - h_2 \nabla h_1) - \frac{p}{2} \int \widetilde{Q}_{\omega}^{p-1} \Psi^{p-1} h_1^2 \\& + \frac{1}{2} \int \widetilde{Q}_{\omega}^{p-1} \Psi^{p-1} h_2^2  + \beta(t,x) .
\end{align*}
\begin{align*}
\left( \frac{\omega}{2} + \frac{\left|v \right|^2}{8} \right) \left(  M(u(t))- M(\widetilde{R}(t)) \right) &=  \left( \omega + \frac{ \left| v \right|^2}{4} \right) \int \widetilde{Q}_{\omega} \Psi h_1 + \left( \frac{\omega}{2}  + \frac{\left| v \right|^2}{8} \right) \int \left| h \right|^2    .
\end{align*}
\begin{align*}
-\frac{v}{2}. \left( P(u(t))-P(\widetilde{R}(t)) \right)   &=   -\frac{\left| v \right|^2}{2} \int \widetilde{Q}_{\omega} \Psi h_1 - \frac{\left| v \right|^2}{4} \int \left| h \right|^2  - \int \frac{v}{2}. \left(h_1\nabla h_2 + h_2 \nabla h_1 \right) \\ &+ \int v.\nabla (\widetilde{Q}_{\omega} \Psi ) h_2 .
\end{align*}

Then we have, 
\begin{align*}
& \left[ E(u(t)) + (\frac{\omega}{2}+\frac{|v|^2}{8}) M(u(t))- \frac{v}{2}P(u(t)) \right] -  \left[ E(\widetilde{R}(t))+ (\frac{\omega}{2}+\frac{|v|^2}{8})M(\widetilde{R}(t)) - \frac{v}{2}P(\widetilde{R}(t)) \right]  \\ &= \frac{1}{2}\left[ (\widetilde{L}^{+}_{\omega}h_1,h_1) + (\widetilde{L}^{-}_{\omega}h_2,h_2) \right] -\frac{p}{2} \int \widetilde{Q}_{\omega}^{p-1} h_1^2 (\Psi^{p-1}-1) - \frac{1}{2} \int \widetilde{Q}_{\omega}^{p-1} h_2^2(\Psi^{p-1} -1 )\\ &  + \int -\Delta(\widetilde{Q}_{\omega} \Psi) h_1 dx - \int \widetilde{Q}_{\omega}^{p} \Psi^{p} h_1 + \int  \omega \widetilde{Q}_{\omega} \Psi h_1 + \beta(t,x) \\ & = \frac{1}{2}\left[ (\widetilde{L}^{+}_{\omega}h_1,h_1) + (\widetilde{L}^{-}_{\omega}h_2,h_2) \right] - \frac{p}{2} \int \widetilde{Q}_{\omega}^{p-1} h_1^2 (\Psi^{p-1}-1) - \frac{1}{2} \int \widetilde{Q}_{\omega}^{p-1} h_2^2(\Psi^{p-1} -1 ) \\
&+ \int \underbrace{(-\Delta \widetilde{Q}_{\omega} + \omega \widetilde{Q}_{\omega} -\widetilde{Q}_{\omega}^{p} ) }_{=0} \Psi  \,h_1 - 2 \int \nabla \widetilde{Q}_{\omega} \nabla \Psi h_1  - \int \widetilde{Q}_{\omega} \Delta \Psi h_1  - \int \widetilde{Q}_{\omega}^{p}\Psi (\Psi^{p-1}-1) h_1 +\beta(t,x).
\end{align*}

Using the fact that $\nabla \Psi,\Delta \Psi$ and $(\Psi^{p-1} - 1)$ has a compact support, to conclude the proof of Claim  \ref{clm-E(u)-E(r)}. 
\end{proof}
\begin{proof}[Proof of Claim \ref{estima-v-3}.]
The proof of \eqref{trans-coercivity} is a standard consequence of Lemma \ref{spectalprop} and the following orthogonality conditions, $
\re \displaystyle\int \partial_{x_j} \widetilde{Q}_{\omega}\,  \Psi \, \overline{h} \,dx = 0 ,
\im \displaystyle\int \widetilde{Q}_{\omega} \, \Psi  \, \overline{h} \,  dx = 0 .$ \\

Due to \eqref{estimation-v-1}, there exits $C>0$ such that  
\begin{align*}
\left\|h(t)\right\|_{H^1_0}^2 \leq  C \bigg[&\left(\widetilde{L}^{+}_{\omega}h_1(t,x),h_1(t,x)\right)+ \left(\widetilde{L}^{-}_{\omega}h_2(t,x),h_2(t,x)\right)+ \sum_{j=1}^3\left(\int \partial_{x_i} \widetilde{Q}_{\omega}(t,x) h_1(t,x) dx \right)^2 \\ &+  \left(\int \widetilde{Q}_{\omega}(t,x) h_2(t,x) dx \right)^2 + \left( \im \int  \widetilde{\mathcal{Y}}^{\mp}_{\omega}(t,x) \,  \overline{h}(t,x)  dx \right)^2 \bigg]
\end{align*}
Using the orthogonality conditions, we get $$\int \partial_{x_j} \widetilde{Q}_{\omega} h_1= \int \partial_{x_j} \widetilde{Q}_{\omega} (1- \Psi) h_1  \quad  \text{ and } \quad \int \widetilde{Q}_{\omega} h_2 = \int \widetilde{Q} (1-\Psi) h_2 \, .$$  Due to the exponential decay of $Q$ and the compact support of $(1-\Psi)$ , we have   $$   \left| \int \partial_{x_j} \widetilde{Q}_{\omega}(t) h_1(t) \right| \leq C M 
e^{-2 \delta \sqrt \omega |v| t}  \quad \text{ and } \quad \left| \int \widetilde{Q}_{\omega}(t) h_2(t)  \right| \leq C M e^{-2 \delta \sqrt \omega |v| t } $$ 
\begin{align*}
\im \int  \widetilde{\mathcal{Y}}^{\mp}_{\omega}(t,x) \,  \overline{h}(t,x) &= \alpha^{\pm}(t) +  \im \int  \widetilde{\mathcal{Y}}^{\mp}_{\omega}(t,x) \,  \overline{h}(t,x)(1-\Psi(x)) dx   \\ &= \alpha^{\pm}(t) + O\left(M e^{-2\delta \sqrt{\omega} |v| t } \right)
\end{align*}
This concludes the proof of the Claim \ref{estima-v-3}.

\end{proof}
\begin{section}{Fixed point theorem}
\label{fixed point theorem}
\begin{proof}{This section is devoted to the proof of Theorem $\ref{theoremWithVelocityGrand}.$}

Recall that, if $\Theta = \emptyset $ then
$H(t,x)=e^{i\varphi(t,x)}Q_\omega(x-tv)$, where $\varphi(t,x)=\frac{1}{2}(x.v)-\frac{1}{4} \left| v \right|^2 t + t\omega$, is an exact soliton solution of~(NLS). \\ 

Let  $R(t,x)= e^{i\varphi(t,x)} Q_{\omega}(x-tv) \Psi(x)$. Write $$(i\partial_t  + \Delta )  R = - \Psi \left| H  \right|^2 H+ 2 \nabla \Psi \nabla H+ \Delta \Psi H. $$
 We look for $r_\omega \in C([T_0,+\infty),H^2(\Omega) \cap H^1_0(\Omega))$ such that
 \begin{equation}
 \label{equa-on-p}
     \begin{cases}
     i\partial_t r_\omega + \Delta r_\omega = -\left| R + r_\omega 
     \right|^2 \left( R + r_\omega \right) + \Psi \left| H  \right|^2 H - 2 \nabla \Psi \nabla H  - \Delta \Psi H , \\
     r_\omega(t) \longrightarrow 0 \; \text{ as  } \;t \longrightarrow +\infty \; \text{ in } H^2(\Omega) \cap H^1_0(\Omega). 
     \end{cases}
 \end{equation}
Set
\begin{align*}
A_0(t,x)&= \Psi(x) (1-\Psi^2(x)) \left| H(t,x)\right|^2 H (t,x) - 2 \nabla \Psi(x) \nabla H(t,x)- \Delta \Psi(x) H(t,x) , \\ 
A_1(r_\omega(t,x))&= - R(t,x)^2\overline{r_\omega}(t,x)- 2 \left| R(t,x) \right|^2 r_\omega(t,x) , \\
A_2(r_\omega(t,x))&= - \overline{R}(t,x) r_\omega^2(t,x) - 2  R(t,x)  \left| r_\omega(t,x) \right|^2 , \\
A_3(r_\omega(t,x))&= - \left| r_\omega(t,x) \right|^2 r_\omega (t,x).
\end{align*}

We shall look for solutions of \eqref{equa-on-p} in this space: 
$$E=\{ r_\omega \in C\left([T_0,+\infty),H^2(\Omega) \cap H^1_0(\Omega)\right), \;
\left\|r_\omega \right\|_E < \infty       \}, $$
such that $$ \left\| r_\omega \right\|_E = \sup_{ t\geq T_0} \left\{ e^{\delta \sqrt{\omega} \left| v \right| t }\left( \frac{1}{\left|v\right|^3} \left\|r_\omega \right\|_{H^2(\Omega)} + \left\| r_\omega \right\|_{L^2(\Omega)} \right) \right\} . $$
Let \begin{equation*} \begin{aligned}   
    \Phi:(B_E,d_E) &\longrightarrow (B_E,d_E)  \\
    r_\omega \longmapsto  & \Phi(r_\omega)= -i \int_{t}^{+\infty} S(t-\tau)  A_0(\tau) d\tau - i \sum_{k=1}^{3} \int_{t}^{+\infty} S(t-\tau)  A_k(r_\omega(\tau)) d\tau .
\end{aligned}
\end{equation*}
Where  $ B_E=B_E(0 , 1)=\{ h \in E, \;  \left\| h\right\|_E \leq 1  \}$ and $\displaystyle d_E(h,g) =  \left\|h-g\right\|_{E} .$  \\
   One can check that $(B_E,d_E)$ is a complete metric space.\\
   Here S(t) is the unitary group of the linear Schr\"odinger equation with Dirichlet boundary conditions.\\
    
Denote,\begin{align*}
 \displaystyle   \rm{J}_0(t)&=  \displaystyle  \int_t^{+\infty} S(t-\tau) A_0(\tau) d\tau , \\
 \displaystyle   \rm{J}_k(r_\omega(t))&=  \displaystyle \int_t^{+\infty} S(t-\tau) A_k(r_\omega(\tau)) d\tau, \quad k=1,2,3. 
     \end{align*}
     
  \begin{rem}
  \label{proof-for-any-p}
 For $2\leq p<5$, the proof is also based on a fixed point theorem as the cubic case. Indeed, we have to use Taylor expansion for the non-linearity $\left| R+r_\omega \right|^{p-1} \left( R+ r_\omega \right)$ and we can divid the function $\Phi$ in three integrals, one for the constant terms on $r_\omega$, the other for the linear terms on $r_\omega$ and the last one for the nonlinear terms on $r_\omega$. Finally, we use the same space $E$ and norm to prove that $\Phi$ is a contraction mapping for high velocity.\\ 
 \end{rem}
 
In step $1$, we will prove that the ball $B_E$ is stable by $\Phi$ and in the second step we will prove that $\Phi$ is a contraction mapping on the complete metric space $(B_E,d)$. Finally, in step $3$ we will conclude by fixed point theorem the existence of the solution of the (NLS$_\Omega$).
\begin{itemize}
\item Step 1 :   Stability of $B_E$ by $\Phi$. 
\label{Step1}
\begin{lem}
\label{lemmaEstimationOn_J_K}
There exists  $C_{\omega}>0$ and $\delta>0$ such that,  \begin{align}
\label{estimation de J_0}
\left\| \rm{J}_0 \right\|_{E} &\leq \frac{C_\omega}{  \left| v \right|} \\
 \label{EstimationJ_1}
\left\| \rm{J}_1(r_\omega) \right\|_{E} &\leq \frac{C_\omega}{ \left| v \right|  } \left\|r_\omega \right\|_{E}  \\
 \label{EstimationJ_2}
 \left\| \rm{J}_2(r_\omega) \right\|_{E} &\leq C_{\omega} \left| v \right|^4  e^{-\delta \sqrt{\omega} \left| v\right| T_0} \left\|r_\omega \right\|_{E}^2  \\
 \label{EstimationJ_3}
 \left\| \rm{J}_3(r_\omega) \right\|_{E} &\leq C_{\omega}\left| v \right|^5  e^{-2\delta \sqrt{\omega} \left| v\right| T_0} \left\|r_\omega \right\|_{E}^3  \\
 \forall r_\omega \in B_E, \quad  \left\| \Phi(r_\omega) \right\|_E & \leq 1.
\end{align}
\end{lem}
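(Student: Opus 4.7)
The plan is to handle each $J_k$ separately by moving the unitary group $S(t-\tau)$ outside via the triangle inequality on $L^2(\Omega)$ and on $H^2(\Omega)\cap H^1_0(\Omega)$ (on which $S(t)$ is also unitary, since $\Delta_\Omega$ is self-adjoint with this form domain), and then estimating $A_k(\tau)$ pointwise in $\tau$. The crucial pointwise input comes from the structure of $R$ and $H$: since $R(t,x)=e^{i\varphi(t,x)}Q_\omega(x-tv)\Psi(x)$ with $\varphi$ containing the linear phase $\tfrac12 x\cdot v$, one has $\|R(t)\|_{L^\infty}\lesssim 1$ but each spatial derivative of $R$ brings a factor $|v|$ from the phase, hence $\|R(t)\|_{H^k}\lesssim |v|^k$ (and similarly for $H$). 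Combined with the algebra property $\|fg\|_{H^2}\lesssim \|f\|_{H^2}\|g\|_{H^2}$ and the Sobolev embedding $H^2(\Omega)\hookrightarrow L^\infty(\Omega)$ valid in three space dimensions, this allows us to control products.

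For $J_0$, the key point is that $A_0$ is supported in the compact region where one of $\Psi(1-\Psi^2),\nabla\Psi,\Delta\Psi$ is nonzero; on that region $|Q_\omega(x-\tau v)|+|\nabla Q_\omega(x-\tau v)|+|\nabla^2 Q_\omega(x-\tau v)|\lesssim e^{-\delta\sqrt{\omega}|v|\tau}$ by the exponential decay \eqref{eq_QQ} (with $T_0$ taken large enough that the soliton center $\tau v$ has left a neighborhood of $\Theta$). Counting the at most two phase derivatives, $\|A_0(\tau)\|_{H^2}\lesssim |v|^2 e^{-\delta\sqrt{\omega}|v|\tau}$ and $\|A_0(\tau)\|_{L^2}\lesssim e^{-\delta\sqrt{\omega}|v|\tau}$. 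Integrating from $t$ to $+\infty$ produces the expected $(\sqrt{\omega}|v|)^{-1}e^{-\delta\sqrt{\omega}|v|t}$; after multiplying by $e^{\delta\sqrt{\omega}|v|t}$ and weighting the $H^2$-part by $|v|^{-3}$, both pieces of the $E$-norm are bounded by $C_\omega/|v|$, which is \eqref{estimation de J_0}.

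For the nonlinear pieces, the bookkeeping proceeds similarly. Using $\|R\|_{L^\infty}\lesssim 1$, $\|R\|_{H^2}\lesssim |v|^2$ and the pointwise bounds $\|r_\omega(\tau)\|_{L^2}\leq e^{-\delta\sqrt{\omega}|v|\tau}\|r_\omega\|_E$, $\|r_\omega(\tau)\|_{H^2}\leq|v|^3 e^{-\delta\sqrt{\omega}|v|\tau}\|r_\omega\|_E$, and $\|r_\omega(\tau)\|_{L^\infty}\lesssim \|r_\omega(\tau)\|_{H^2}$, one gets
\begin{align*}
\|A_1(\tau)\|_{L^2}&\lesssim \|r_\omega\|_{L^2},\qquad\|A_1(\tau)\|_{H^2}\lesssim |v|^2\|r_\omega\|_{H^2},\\
\|A_2(\tau)\|_{L^2}&\lesssim \|r_\omega\|_{L^\infty}\|r_\omega\|_{L^2},\qquad\|A_2(\tau)\|_{H^2}\lesssim |v|^2\|r_\omega\|_{H^2}^2,\\
\|A_3(\tau)\|_{L^2}&\lesssim \|r_\omega\|_{L^\infty}^2\|r_\omega\|_{L^2},\qquad\|A_3(\tau)\|_{H^2}\lesssim \|r_\omega\|_{H^2}^3.
\end{align*}
Time-integration gives one factor $1/(\sqrt{\omega}|v|)$, while counting copies of $e^{-\delta\sqrt{\omega}|v|\tau}$ (one for $J_1$, two for $J_2$, three for $J_3$) leaves, after weighting by $e^{\delta\sqrt{\omega}|v|t}$, either no extra decay (for $J_1$) or one/two extra factors of $e^{-\delta\sqrt{\omega}|v|T_0}$ (for $J_2,J_3$). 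A straightforward counting of the $|v|$-powers then produces exactly the right-hand sides of \eqref{EstimationJ_1}--\eqref{EstimationJ_3}.

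Combining the four bounds yields
\[
\|\Phi(r_\omega)\|_E\leq \frac{C_\omega}{|v|}\bigl(1+\|r_\omega\|_E\bigr)+C_\omega|v|^4 e^{-\delta\sqrt{\omega}|v|T_0}\|r_\omega\|_E^2+C_\omega|v|^5 e^{-2\delta\sqrt{\omega}|v|T_0}\|r_\omega\|_E^3,
\]
and on $B_E$ the right-hand side is $\leq 1$ provided $|v|$ is chosen large enough (with $T_0$ fixed): the first two terms are controlled by $1/|v|$, while the last two are crushed by the exponentials, which dominate any polynomial in $|v|$. The main technical difficulty is purely bookkeeping: verifying that the normalization $|v|^{-3}\|\cdot\|_{H^2}$ chosen in the definition of $\|\cdot\|_E$ is exactly what is required to absorb simultaneously the two $|v|$-factors coming from phase derivatives on $R$ and $H$, and the $|v|^3$ factors that come with the $H^2$-size of $r_\omega$ in the algebra estimates, so that each term in the final sum closes with either a $1/|v|$ or an exponentially small prefactor.
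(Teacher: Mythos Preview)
Your overall strategy coincides with the paper's: push the unitary group through, estimate $A_k(\tau)$ pointwise in $\tau$, and integrate. For $J_2$ and $J_3$ your use of the $H^2$-algebra property is exactly what the paper does, and your final conclusion about stability of $B_E$ is fine. However there is a genuine gap in your handling of $J_1$, and a miscount for $J_0$.

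For $J_0$ you write ``at most two phase derivatives'', giving $\|A_0\|_{L^2}\lesssim e^{-\delta\sqrt\omega|v|\tau}$ and $\|A_0\|_{H^2}\lesssim |v|^2 e^{-\delta\sqrt\omega|v|\tau}$. But $A_0$ already contains the term $-2\nabla\Psi\cdot\nabla H$, which carries \emph{one} derivative of $H$ before you take any further derivatives for the $H^2$ norm. Hence $\|A_0\|_{L^2}\lesssim |v|\,e^{-\delta\sqrt\omega|v|\tau}$ and $\|A_0\|_{H^2}\lesssim |v|^3 e^{-\delta\sqrt\omega|v|\tau}$; this is the content of the paper's Claim~\ref{claim calcule}. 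Your final bound for $J_0$ happens to survive, but not for the reason you state.

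The serious problem is $J_1$. Your bound $\|A_1(\tau)\|_{H^2}\lesssim |v|^2\|r_\omega(\tau)\|_{H^2}$ from the $H^2$-algebra property is correct but too crude: plugging in $\|r_\omega(\tau)\|_{H^2}\le |v|^3 e^{-\delta\sqrt\omega|v|\tau}\|r_\omega\|_E$ gives $\|A_1(\tau)\|_{H^2}\lesssim |v|^5 e^{-\delta\sqrt\omega|v|\tau}\|r_\omega\|_E$, and after integrating and weighting you obtain only $\|J_1\|_E\lesssim |v|\,\|r_\omega\|_E$, not $|v|^{-1}\|r_\omega\|_E$. The point is that the weight $|v|^{-3}$ in the $E$-norm was calibrated to absorb exactly \emph{one} copy of $\|r_\omega\|_{H^2}$, not $\|R^2\|_{H^2}\|r_\omega\|_{H^2}$; the two extra powers of $|v|$ from $\|R^2\|_{H^2}$ do not cancel. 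The paper avoids this by not using the algebra inequality at all for $J_1$: it expands via Leibniz,
\[
\|\nabla^2(R^2\,r_\omega)\|_{L^2}\lesssim \|\nabla^2 R^2\|_{L^\infty}\|r_\omega\|_{L^2}+\|\nabla R^2\|_{L^\infty}\|\nabla r_\omega\|_{L^2}+\|R^2\|_{L^\infty}\|\nabla^2 r_\omega\|_{L^2},
\]
uses $\|\nabla^k R^2\|_{L^\infty}\lesssim |v|^k$ (the $L^\infty$ norms, not $H^2$), and controls the middle term by the interpolation $\|\nabla r_\omega\|_{L^2}\le \|\nabla^2 r_\omega\|_{L^2}^{1/2}\|r_\omega\|_{L^2}^{1/2}$. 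Each of the three resulting terms is then $\lesssim |v|^3 e^{-\delta\sqrt\omega|v|\tau}\|r_\omega\|_E$, which is precisely what is needed for \eqref{EstimationJ_1}. The moral is that for the linear-in-$r_\omega$ term you must exploit that $R^2\in W^{2,\infty}$ with the correct $|v|$-scaling, not merely $R^2\in H^2$.
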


\begin{proof}
\begin{enumerate}
    \item  \textbf{Estimate For $J_0$.} \\ 
Recall that $ A_0(t,x)= \Psi(x) (1-\Psi^2(x)) \left| H(t,x)\right|^2 H (t,x) - 2 \nabla \Psi(x) \nabla H(t,x)- ~\Delta \Psi(x) H(t,x),$ 
where
$H(t,x)=Q_{\omega}(x-tv)e^{i(\frac{1}{2}(x.v)- \frac{ \left|v\right|^2}{4}t+ t\, \omega )}.$ \\

Let us prove that there exists $C_\omega > 0$ such that,\begin{equation}
\label{EquationDeA_0}
    \left\| A_0(t) \right\|_{H^2} \leq C_\omega \left| v \right|^3  e^{-\delta \sqrt{\omega} \left| v \right| t  }, \quad \forall t \in [T_0,+\infty).
 \end{equation}
 It suffices to estimate the $L^2$ norm of $A_0$ and $\nabla^2 A_0$, due to the following elementary interpolation inequality, if $f \in H^2$, 
\begin{equation}
\label{interpolation-inequality}
    \left\| \nabla f  \right\|_{L^2} \leq  \left\| \nabla^2 f \right\|_{L^2}^{\frac{1}{2}} \left\| f \right\|_{L^2}^{\frac{1}{2}} .
\end{equation}

We will use the fact that $\Psi(1-\Psi^2)$, $\nabla \Psi$ and $\Delta \Psi $ have a compact support. We~will suppose that their support is include in $ \{ \left| x \right| < M \}$, for some $M>0$.  \\ 

Let $x \in \text{supp}\left\{ \Psi(1-\Psi^2) \right\} \subset \left\{ \left| x \right| < M  \right\}$ then 
$ \left\{  \; t \left| v \right| - M \leq \left| x -tv \right|  \; \right\}.$ \\ 

By \eqref{eq_QQ}, we have 
\begin{equation}
    \begin{cases}
    \label{Supp-Compact-Q}
    \left| Q_\omega(x-tv) \right| \leq C_\omega e^{\delta \sqrt{\omega} M } e^{-\delta \sqrt{\omega} \left| v \right| t } ,  \\
    \left|\nabla Q_\omega(x-tv)\right| \leq C_\omega e^{\delta \sqrt{\omega} M } e^{-\delta \sqrt{\omega} \left| v \right| t } . 
    \end{cases}
\end{equation}
Then, \begin{equation*}
    \left\| A_0 \right\|_{L^2} \leq C_\omega \left| v \right|   e^{-\delta \sqrt{\omega} \left| v \right| t } . 
\end{equation*}

Now, let us estimate $\nabla^2 A_0.$  

Recall that $ \displaystyle A_0=  \Psi (1-\Psi^2) \left| H\right|^2 H  - 2 \sum_{k=1}^{3} \partial_{x_k} \Psi \partial_{x_k} H- ~\Delta \Psi H. $

\begin{align*}
 \partial_{x_j} \partial_{x_i} A_0(t,x)&=   \partial_{x_j} \partial_{x_i} \left[ \Psi (1-\Psi^2) \right] \left| H\right|^2 H  + \partial_{x_i} \left[ \Psi (1-\Psi^2) \right]  \partial_{x_j}  \left[ \left| H\right|^2 H  \right] \\& +   \partial_{x_j} \left[ \Psi (1-\Psi^2) \right]  \partial_{x_i}  \left[ \left| H\right|^2 H\right] + \left[ \Psi (1-\Psi^2) \right] \partial_{x_j}   \partial_{x_i} \left[ \left| H\right|^2 H\right] \\
&-2  \left(\sum_{k=1}^{3}  \partial_{x_j} \partial_{x_i} \left[\partial_{x_k} \Psi \right]  \;  \partial_{x_k} H + \partial_{x_i} \left[\partial_{x_k} \Psi \right] \partial_{x_j} \partial_{x_k} H   \right) \\ 
&-2 \left( \sum_{k=1}^{3} \partial_{x_j}\partial_{x_k} \Psi \; \partial_{x_i} \left[ \partial_{x_k} H \right] + \partial_{x_k} \Psi \;  \, \partial_{x_j} \partial_{x_i} \left[ \partial_{x_k} H \right] \right)\\
& -   \partial_{x_j} \partial_{x_i} \left[ \Delta \Psi \right] H -  \partial_{x_i} \left[ \Delta \Psi \right] \partial_{x_j} H- \partial_{x_j}\left[ \Delta \Psi \right] \partial_{x_i}H- \left[ \Delta \Psi \right] \partial_{x_j} \partial_{x_i}H
\end{align*} 

\begin{clm}
\label{claim calcule}
\begin{align*}
\left|\nabla^{4-k} \Psi(x) \nabla^k H(t,x) \right| &\leq C_\omega \left|v\right|^k  e^{-\delta \sqrt{\omega} \left| v \right| t}, \text{ where } k=1,2,3. \\
 \left| \nabla^{2-k} \left( \Psi(x) ( 1-\Psi^2(x)) \right) \nabla^k (\left|H(t,x)  \right |^2 H(t,x) ) \right| &\leq C_\omega \left|v\right|^k  e^{-\delta \sqrt{\omega} \left| v \right| t}, \text{ where } k=1,2.
\end{align*}
 Where, \begin{align*}
    \nabla^3 f   \; \nabla^1 g &= \sum_{k=1}^{3} \left(  \partial_{x_i {x_j} x_k} f \right)  \;  \left( \partial_{x_k} g \right), \qquad \quad 
     \nabla^2 f \; \nabla^2 g=  \sum_{k=1}^{3} \left(\partial_{x_j x_k} f \right) \left( \;  \partial_{x_i x_k} g \right) , \\
    (\nabla^2 f) \;  g&=  \left( \partial_{x_i x_j} f  \right) g \; , \qquad \quad \qquad \qquad 
    \nabla^1 f \; \nabla^1 g =   \left( \partial_{x_i} f  \right) \left( \partial_{x_j} g \right) . 
 \end{align*}
\end{clm}
 \begin{proof}
  We postpone the proof of Claim \ref{claim calcule} to Appendix \ref{Appendix du theorem Grand vitesse}
 \end{proof}
 
 By the Claim \ref{claim calcule}, we have 
\begin{equation*}
    \left\| \nabla^2 A_0(t) \right\|_{L^2} \leq C_\omega \left| v\right|^3  e^{-\delta \sqrt{\omega} 
    \left| v \right| t }.
\end{equation*}
This concludes the proof of \eqref{EquationDeA_0}.\\ 

Thus, we obtain \begin{equation*}
        \left\| \rm{J}_0(t) \right\|_{H^2} \leq \int_t^{+\infty} \left\|A_0(\tau)  \right\|_{H^2} d\tau  
        \leq C_\omega \left|v\right|^2  e^{-\delta \sqrt{\omega} \left| v \right| t } ,
    \end{equation*}
    \begin{equation*}
        \left\| \rm{J}_0 \right\|_{E} \leq \frac{C_\omega}{  \left| v \right|} .
    \end{equation*}
    
   \item \textbf{Estimate for $\rm{J}_1$.} \\
   Recall that $A_1(r_\omega(t,x))=-R(t,x)\overline{r}_\omega(t,x)-2 \left|R(t,x)\right|^2 r_\omega(t,x) .$ \\ 
   
   Using the elementary interpolation inequality \eqref{interpolation-inequality}, we have
  \begin{align*}
  \left\| \rm{J}_1(r_\omega(t) )\right\|_{H^2}  &\leq  \int_t^{+\infty} \left\| A_1(r_\omega(\tau)) \right\|_{H^2} d\tau \\ &\leq C \int_t^{+\infty} \left\| A_1(r_\omega(\tau)) \right\|_{L^2} d\tau  +  C \int_t^{+\infty} \left\| \nabla^2 A_1(r_\omega(\tau)) \right\|_{L^2} d\tau.
  \end{align*} 
  Let us prove that there exists $C_\omega>0$  such that
  \label{LemmaestimationJ_1}
   \begin{align}
   \label{A_1-L^2}
    \displaystyle \int_t^{+\infty} \left\| A_1(r_\omega(\tau)) \right\|_{L^2} d\tau &\leq \frac{C_\omega}{ \left| v \right|} e^{-\delta \sqrt{\omega} \left| v \right| t} \left\| r_\omega \right\|_{E},\\   
    \label{nabla A_1-L^2}
   \displaystyle \int_{t}^{+\infty} \left\| \nabla^2 A_1( r_\omega(\tau)) \right\|_{L^2} d\tau &\leq C_{\omega} \left| v \right|^2 e^{-\delta \sqrt{\omega} \left|v\right| t } \left\| r_\omega \right\|_{E} .
   \end{align}
\begin{align*}
    \int_{t}^{+\infty} \left\| A_1(r_\omega(\tau)) \right\|_{L^2} d\tau \leq  C \int_{t}^{+\infty} \left\| r_\omega(\tau) \right\|_{L^2} d\tau &\leq C \int_{t}^{+\infty} e^{-\delta \sqrt{\omega} \left| v \right| \tau } d\tau \left\| r_\omega\right\|_{E} \\& \leq \frac{C_\omega}{  \left| v \right|} e^{-\delta \sqrt{\omega} \left|v \right| t  } \left\| r_\omega\right\|_{E} .
\end{align*}
This prove the first estimate. Now, let us look to the second estimate 
\begin{align*}
    \int_{t}^{+\infty} \left\| \nabla^2 A_1(r_\omega(\tau))\right\|_{L^2}  d\tau & \leq \; C  \underbrace{\int_{t}^{+\infty} \left\| \nabla^2 R^2(\tau) \right\|_{L^{\infty}} \left\| r_\omega(\tau) \right\|_{L^2} d\tau }_{\rm I_1}\\ & + C  \underbrace{\int_{t}^{+\infty} \left\|  \nabla R^2(\tau) \right\|_{L^{\infty}} \left\|  \nabla r_\omega(\tau) \right\|_{L^2} d\tau }_{\rm I_2} \\ & + C \underbrace{\int_{t}^{+\infty} \left\| R^2(\tau) \right\|_{L^{\infty}} \left\| \nabla^2 r_\omega(\tau) \right\|_{L^2} d\tau }_{\rm I_3}.
 \end{align*}
 It is easy to see that 
 \begin{align*}
 \left| \rm{I}_1 \right| & \leq C_\omega \left| v \right| e^{-\delta \sqrt{\omega } \left| v \right| t} \left\|r_\omega \right\|_{E}, \\ 
\left| \rm{I}_3\right| & \leq C_\omega \left| v \right|^2 e^{-\delta \sqrt{\omega } \left| v \right| t} \left\|r_\omega \right\|_{E} .
\end{align*}
For $ \rm{I}_2 $ we use the elementary interpolation inequality \eqref{interpolation-inequality},
\begin{equation*}
    \left\| \nabla r_\omega(\tau) \right\|_{L^2} \leq  \left\| \nabla^2 r_\omega(\tau) \right\|_{L^2}^{\frac{1}{2}} \left\| r_\omega \right\|_{L^2}^{\frac{1}{2}}.
\end{equation*}
Thus we get, $$ \left| \rm{I}_2 \right| \leq C_\omega \left| v\right|^{\frac{3}{2}} e^{-\delta \sqrt{\omega} \left| v \right| t } \left\| r_\omega(\tau) \right\|_E .$$
And this concludes the proof of the estimates \eqref{A_1-L^2} and \eqref{nabla A_1-L^2}.  \\

Due to \eqref{A_1-L^2}, \eqref{nabla A_1-L^2} and the fact that $\left|v \right|>1$ we have 
$$ \left\| \rm{J}_1(r_\omega) \right\|_{H^2} \leq C_\omega  \left| v \right|^2   e^{-\delta \sqrt{\omega} \left| v \right| t } \left\| r_\omega \right\|_E .$$

 Then
    \begin{equation*}
        \left\| \rm{J}_1(r_{\omega}) \right\|_{E} \leq \frac{C_\omega}{ \left| v \right|  } \left\|r_\omega \right\|_{E}.
    \end{equation*}
    \item \textbf{Estimate for $\rm{J}_2$.} \\
    Recall that $A_2(r_\omega(t,x))= -\overline{R}(t,x) r_\omega^2(t,x) -2 R(t,x)\left| r_\omega(t,x) \right|^2 .$ 
    $$ \left\| \rm{J}_2(r_\omega(t) \right\|_{H^2} \leq C \int_{t}^{+\infty} \left\| A_2(r_\omega(\tau) \right\|_{H^2} d\tau .  $$
    Using the fact that $H^2$ is an algebra we obtain \begin{align*}
        \left\|  \rm{J}_2(r_\omega(t) \right\|_{H^2} &\leq C \int_{t}^{+\infty} \left\|R(\tau) \right\|_{H^2} \left\| r_\omega(\tau) \right\|_{H^2}^2 d\tau \\ &\leq  C_\omega \left| v \right|^2 \int_{t}^{+\infty}  \left| v \right|^6 e^{-2 \delta \sqrt{\omega} \left| v \right| \tau } d\tau \left\| r_\omega \right\|_{E}^2 \\ &\leq  C_\omega \left| v \right|^7 e^{-2 \delta \sqrt{\omega} \left| v \right| t }  \left\| r_\omega \right\|_{E}^2 .
    \end{align*}
 Then $$\left\| \rm{J}_2(r_\omega) \right\|_E \leq  C_{\omega}  \left| v \right|^4 e^{-\delta \sqrt{\omega } \left| v \right| T_0} \left\| r_\omega \right\|_{E}^2 .$$
 \item \textbf{Estimate for $\rm{J}_3$.} \\
    We have $A_3(r_\omega(t,x))= -\left| r_\omega(t,x) \right|^2 r_\omega(t,x). $
    \begin{align*}
        \left\|\rm{J}_3(r_\omega(t)) \right\|_{H^2} &\leq \int_{t}^{+ \infty} \left\| A_3(r_\omega(\tau)) \right\|_{H^2}^3 d\tau \leq \int_{t}^{+\infty} \left\| r_\omega(\tau) \right\|_{H^2}^3 d\tau  \\ &\leq \int_{t}^{+\infty} \left| v \right|^9 e^{-3 \delta \sqrt{\omega} \left| v \right| \tau} d\tau \left\|r_\omega \right\|_{E}^3 \\ &\leq {C_\omega}\left| v \right|^8 e^{-3 \delta \sqrt{\omega} \left| v \right| t} \left\| r_\omega \right\|_{E}^3
    \end{align*}
   This implies that \begin{equation*}
       \left\| \rm{J}_3(r_\omega) \right\|_{E} \leq C_{\omega} \left| v \right|^5 e^{-2 \delta \sqrt{\omega} \left| v \right| T_0} \left\| r_\omega \right\|_E^3.
   \end{equation*}

   \item \textbf{Stability of $\Phi$.} \\
Recall that $ \displaystyle \Phi(r_\omega(t,x))= -i \, \rm{J}_0(t) - i \sum_{k=1}^{3} \rm{J}_k(r_\omega(t,x)) $. \\ 

    Using the fact that the velocity $v$ is large enough in each estimate   \eqref{estimation de J_0},\eqref{EstimationJ_1},\eqref{EstimationJ_2} and \eqref{EstimationJ_3}, we get \begin{equation*}
        \forall r_\omega \in B_E(0,1), \quad
   \left\| \Phi(r_\omega) \right\|_{E} \leq \left\| \rm{J}_0 \right\|_{E} +  \sum_{k=1}^{3} \left\| \rm{J}_k(r_\omega) \right\|_E \leq 1.
      \end{equation*}
\end{enumerate}
\end{proof}
\item Step 2 : Contraction mapping.  \\
\label{Step2}
Let $f,g \in B_E(0,1)$
\begin{align*}
    \left\| \Phi( f(t) ) - \Phi(g(t)) \right\|_{H^2} \leq \bigg\| \underbrace{\int_{t}^{+\infty} S(t-\tau) \left( A_1(f(\tau)) -A_1(g(\tau))\right)  d\tau }_{\rm{J}_1(f)-\rm{J}_1(g)}\bigg\|_{H^2} \\ + \bigg\|\underbrace{\int_{t}^{+\infty} S(t-\tau) \left( A_2(f(\tau)) -A_2(g(\tau))\right)  d\tau }_{\rm{J}_2(f)-\rm{J}_2(g)}\bigg\|_{H^2}\\ + \bigg\| \underbrace{\int_{t}^{+\infty} S(t-\tau) \left( A_3(f(\tau)) -A_3(g(\tau))\right) d\tau  }_{\rm{J}_3(f)-\rm{J}_3(g)} \bigg\|_{H^2}.
\end{align*}
\begin{lem}
For all $T_0>0,\omega>0$, there exists $V_0>0$ such that for $|v|>V_0$, for all $f,g\in B_E$, we have 
\begin{equation*}
   d_{E}( \Phi(f)-\Phi(g) )  \leq \frac{1}{2}\;  d_{E}(f- g).
\end{equation*}
\end{lem}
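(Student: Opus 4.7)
The plan is to leverage the fact that $J_0$ does not depend on $r_\omega$, so it cancels in $\Phi(f)-\Phi(g)$, and to reduce the contraction estimate to the same Strichartz/Sobolev bounds already established in Lemma \ref{lemmaEstimationOn_J_K}, applied to the differences $A_k(f)-A_k(g)$ for $k=1,2,3$. Since $(B_E,d_E)$ is defined with $\|\cdot\|_E$, it suffices to estimate each $\|J_k(f)-J_k(g)\|_E$ in terms of $\|f-g\|_E$ with a prefactor that is $o(1)$ as $|v|\to+\infty$.

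For $k=1$, observe that $A_1$ is $\mathbb{R}$-linear in $r_\omega$ (the terms $-R^2\bar r_\omega$ and $-2|R|^2 r_\omega$ are linear in the real and imaginary parts), so $A_1(f)-A_1(g)=A_1(f-g)$. Applying \eqref{EstimationJ_1} directly yields
\[
\|J_1(f)-J_1(g)\|_E \leq \frac{C_\omega}{|v|}\|f-g\|_E.
\]
For $k=2$, expand
\[
A_2(f)-A_2(g) = -\bar R\,(f-g)(f+g) - 2R\bigl((f-g)\bar f + g(\bar f-\bar g)\bigr),
\]
and use that $H^2(\Omega)$ is an algebra (as in the estimate for $J_2$) together with the bound $\|R(\tau)\|_{H^2}\leq C_\omega|v|^2$ and $\|f(\tau)\|_{H^2}, \|g(\tau)\|_{H^2}\leq |v|^3 e^{-\delta\sqrt\omega|v|\tau}$ coming from $f,g\in B_E$. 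This produces, after integration in $\tau\in[t,+\infty)$, a bound of the form
\[
\|J_2(f)-J_2(g)\|_E \leq C_\omega|v|^4 e^{-\delta\sqrt\omega|v|T_0}\bigl(\|f\|_E+\|g\|_E\bigr)\|f-g\|_E \leq 2C_\omega|v|^4 e^{-\delta\sqrt\omega|v|T_0}\|f-g\|_E.
\]
For $k=3$, write
\[
A_3(f)-A_3(g) = -(f-g)|f|^2 - g\bigl((f-g)\bar f+g(\bar f-\bar g)\bigr),
\]
and perform the same kind of algebra computation as for $J_3$ to obtain
\[
\|J_3(f)-J_3(g)\|_E \leq C_\omega|v|^5 e^{-2\delta\sqrt\omega|v|T_0}\bigl(\|f\|_E^2+\|g\|_E^2\bigr)\|f-g\|_E \leq 2C_\omega|v|^5 e^{-2\delta\sqrt\omega|v|T_0}\|f-g\|_E.
\]

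Summing the three contributions,
\[
d_E\bigl(\Phi(f),\Phi(g)\bigr) \leq \Bigl(\tfrac{C_\omega}{|v|} + 2C_\omega|v|^4 e^{-\delta\sqrt\omega|v|T_0} + 2C_\omega|v|^5 e^{-2\delta\sqrt\omega|v|T_0}\Bigr)\|f-g\|_E.
\]
For fixed $T_0,\omega>0$ the exponential factors beat the polynomial growth in $|v|$; choosing $V_0=V_0(\omega,T_0)\gg 1$ large enough makes each of the three terms strictly less than $1/6$, which yields the $\tfrac12$-contraction. There is no real obstacle here beyond the algebraic bookkeeping: the only thing to be careful about is to factor out $f-g$ cleanly in the cubic and quadratic pieces so that the estimates of Lemma \ref{lemmaEstimationOn_J_K} transfer verbatim, and to confirm that the polynomial powers of $|v|$ appearing in those estimates are indeed dominated by the exponential decay in $T_0$ (which can be absorbed into the choice of $V_0$ since $|v|>V_0$).
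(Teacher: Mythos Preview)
Your proposal is correct and follows essentially the same approach as the paper: cancel $J_0$, use the $\mathbb{R}$-linearity of $A_1$ to apply \eqref{EstimationJ_1} to $f-g$, and factor the differences $A_2(f)-A_2(g)$, $A_3(f)-A_3(g)$ to reproduce the $|v|^4 e^{-\delta\sqrt{\omega}|v|T_0}$ and $|v|^5 e^{-2\delta\sqrt{\omega}|v|T_0}$ bounds, then absorb everything into a large $V_0$. The paper chooses the individual pieces to be at most $1/8$ rather than $1/6$, but this is cosmetic.
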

\begin{proof}
Due to Lemma \ref{lemmaEstimationOn_J_K} we have 
    $$ \left\|\rm{J}_1(f)-\rm{J}_1(g) \right\|_{E} \leq \frac{C_\omega}{ \left|v\right| } \left\| f- g\right\|_{E}, $$
    Let $V_0>0$ large enough to be chosen below such that for $\left|v\right| > V_0$, we have \begin{equation}
         \label{1-ConditionOnv}
         \frac{1}{\left|v\right|} \leq \frac{1}{8}.
         \end{equation} Then, \begin{equation}
        \label{estimationONJ_1 avec Vitesse}
        \left\| \rm{J}_1(f) - \rm{J}_1(g) \right\|_{E} \leq \frac{1}{8} \;  \left\| f-g \right\|_{E}.
    \end{equation}
    Recall that $A_2(h(t,x))= -\overline{R}(t,x)) h^2(t,x) -2 R(t,x) \left| h(t,x) \right|^2 . $
    \begin{align*}
        \left\|\rm{J}_2(f(t))-\rm{J}_2(g(t)) \right\|_{H^2} &\leq C \int_{t}^{+\infty} \left\|R  (\tau) \right\|_{H^2} \left\| f^2(t) -g^2(t) \right\|_{H^2} d\tau \\ 
       & \leq C_{\omega} \left|v\right|^2 \int_{t}^{+\infty} \left|v\right|^6 e^{-2\delta \sqrt{\omega } \left|v \right| \tau } d\tau \left( \left\|f \right\|_E +\left\| g\right\|_E \right) \left\|f-g  \right\|_E \\ & \leq  C_{\omega}  \left|v\right|^7 e^{-\delta \sqrt{\omega } \left|v \right| T_0 } e^{-\delta \sqrt{\omega } \left|v \right| t }  \left( \left\|f \right\|_E +\left\| g\right\|_E \right) \left\|f-g  \right\|_E .
    \end{align*}
   This implies that $$\left\|\rm{J}_2(f(t))-\rm{J}_2(g(t)) \right\|_{E} \leq C_{\omega}  \left|v\right|^4 e^{-\delta \sqrt{\omega} \left| v \right| T_0}\left( \left\|f \right\|_E +\left\| g\right\|_E \right) \left\|f-g  \right\|_E . $$
   Since the velocity $v$ is large enough we have
   \begin{equation}
    \label{2-ConditionOnv}
     \forall f,g \in B_{E}(0,1), \; \;  C_{ \omega} \left|v\right|^4 e^{-\delta \sqrt{ \omega } \left| v \right| T_0} \left( \left\|f \right\|_E +\left\| g\right\|_E \right) \leq \frac{1}{8} , 
     \end{equation}
    then 
    \begin{equation}
    \label{EstimationJ_2 Vitesse}
    \left\| \rm{J}_2(f)-\rm{J}_3(g) \right\|_E \leq \frac{1}{8 }\left\| f-g\right\|_E .
    \end{equation}

Recall that, $A_3(h(t,x))= -h(t,x) \left| h(t,x) \right|^2.$
\begin{align*}
    \left\| \rm{J}_3(f(t))-\rm{J}_3(g(t)) \right\|_{H^2} &\leq   \int_{t}^{+\infty} \left\| \left| f(\tau)\right|^2 f(\tau)  - \left| g(\tau) \right|^2 g(\tau) \right\|_{H^2} d\tau  \\ &\leq \int_{t}^{+\infty} 
    \left\|  \bar{f}(\tau)    \left( f^2(\tau) -g^2(\tau)\right)  + g^2(\tau) \left( \bar{f}(\tau) - \bar{g}(\tau) \right) \right\|_{H^2}d\tau
    \\ &\leq C \int_{t}^{+\infty}  \left\|f(\tau) - g(\tau) \right\|_{H^2}  \left( \left\|f(\tau) \right\|^2_{H^2} + \left\| g(\tau) \right\|_{H^2}^2 \right) d\tau \\ &\leq 
  C  \int_{t}^{+\infty} \left|v\right|^9 e^{-3 \delta \sqrt{\omega}  \left| v \right| \tau } d\tau  \left( \left\|f\right\|^2_{E} + \left\| g \right\|_{E}^2 \right) \left\|f - g \right\|_{E}   \\
 & \leq  C_{\omega} \left|v\right|^8 e^{-2 \delta \sqrt{\omega}  \left| v \right|T_0 } e^{- \delta \sqrt{\omega}  \left| v \right| t }  \left( \left\|f \right\|^2_{E} + \left\| g \right\|_{E}^2 \right) \left\|f - g \right\|_{E} .
 \end{align*}
 Hence $$  \left\| \rm{J}_3(f(t))-\rm{J}_3(g(t)) \right\|_{E} \leq C_\omega \left| v \right|^5  e^{-2 \delta \sqrt{\omega}  \left| v \right|T_0 }   \left( \left\|f \right\|^2_{E} + \left\| g \right\|_{E}^2 \right) \left\|f - g \right\|_{E}  .   $$
 Due to the choice of the high velocity $v$ we have 
 \begin{equation}
 \label{3-ConditionOnv}
 \forall f,g \in B_E(0,1), \; \, C_\omega \left| v \right|^5  e^{-2 \delta \sqrt{\omega}  \left| v \right|T_0 }   \left( \left\|f \right\|^2_{E} + \left\| g \right\|_{E}^2 \right) \leq \frac{1}{8},\end{equation}
and thus 
 \begin{equation}
     \label{Estimation_J_3 on Vitesse grand}
     \left\| \rm{J}_3(f) -\rm{J}_3(g) \right\|_{E} \leq \frac{1}{8} \left\| f-g \right\|_{E}. 
 \end{equation}
 The inequalities \eqref{1-ConditionOnv}, \eqref{2-ConditionOnv}, 
 \eqref{3-ConditionOnv} specifies how large $V_0$ needs to be taken and from \eqref{estimationONJ_1 avec Vitesse}, \eqref{EstimationJ_2 Vitesse} and \eqref{Estimation_J_3 on Vitesse grand} we have \begin{equation*}
\forall f,g \in B_{E}(0, 1), \; 
\left\| \Phi(f) - \Phi(g) \right\|_E \leq \frac{1}{2}
\left\| f-g \right\|_E.
 \end{equation*}
Thus $\Phi$ is a contraction mapping for $v$ large enough.

\end{proof}
\item Step 3: Conclusion. \\ 
Due to steps 1 and 2, $\Phi$ is a contraction mapping for high velocity on the complete metric space~$(B_E,d_E)$. 
By the fixed point Theorem there exists a unique solution, $$ r_\omega(t,x)= \Phi(r_\omega(t,x))= -i \, \rm{J}_0(t) - i \sum_{k=1}^{3} \rm{J}_k(r_\omega(t,x)) , $$
such that $$ \left\| r_\omega(t) \right\|_{H^2} \leq C_{\omega} \left| v \right|^3 e^{ - \delta \sqrt{\omega} \left| v \right| t} \quad \forall t \in [T_0,+\infty) , $$
which concludes the proof of Theorem
\ref{theoremWithVelocityGrand}.
\end{itemize}
\end{proof}

\end{section}

\appendix
\section{Proof of some Technical results}
\label{Appendix du theorem pranp}

\begin{proof}[Proof of Lemma \ref{spectalprop}]
Recall that for all $f \in H^1\backslash \{ \lambda Q \, ; \; \lambda \in \R \}$ real valued, we have $\int (L^{-}f)f>~0$.
Denote $y_1=\re(\mathcal{Y}^{+})$ and  $y_2=\im({\mathcal{Y}^{+}})$. Since $y_2$ is not colinear to $Q$, we have \begin{equation} \label{normalize}
-\im \int \mathcal{Y}^{+} \overline{\mathcal{Y}^{-}} = 2 \int y_1 y_2 = \frac{2}{e_0} \int -(L^{-}y_2 ) y_2 \neq 0.
\end{equation}

Let $h \in H^1$ such that $h=h_1+ih_2$, we can write $h$ as,
$$h = h^{\perp}+ g ,$$ where,  \begin{equation*}\begin{cases}
h^{\perp} \in G^{\perp}=\{ h \in H^1, \; (h,iQ)=(h,i\mathcal{Y}^{\pm}) =(h,\partial_{x_j} Q)=0 , \; j= 1,2,3   \},  \\ 
g \in Span\{i \mathcal{Y^{+}},i\mathcal{Y^{-}},(\partial_{x_j}Q)_{j=1,2,3} , i \,Q   \} . 
    
    \end{cases}
\end{equation*}
Denote by: \begin{equation}
    \begin{cases}
    \phi_1= \mathcal{Y^{+}}, \qquad \qquad \mu_1 = i \mathcal{Y^{-}}. \\ \phi_2= \mathcal{Y^{-}}, \qquad \qquad \mu_2 =   i \mathcal{Y^{+}}.\\
    \phi_k= \partial_{x_{k-2} }Q ,\quad \quad \mu_k= \partial_{x_{k-2} }Q \quad k=3,4,5 .\\ 
    \phi_6= i \, Q ,  \qquad \qquad \mu_6= iQ - \mu_1(\phi_1,iQ) - \mu_2(\phi_2,iQ) . 
    \end{cases}
\end{equation}
Next, one can verify that: $(\phi_j,\mu_k)=\zeta_{j}\,  \delta^k_j$, by \eqref{normalize} we have $\zeta_1,\zeta_2 \neq 0$ and it is clear that  $\zeta_j \neq 0, \; \forall j \in  [\![3;6]\!] $. This implies that $(\phi_j,\mu_j)_j$ is a biorthogonal family then we can write $g$ as the following \begin{align*}
g= \sum_{j=1}^{6} \frac{(h,\mu_j)}{\zeta_j} \phi_j 
&=\frac{1}{\zeta_1}(h,i \, \mathcal{Y^{-}}) \; \mathcal{Y^{+}} +\frac{1}{\zeta_2} (h,i \, \mathcal{Y^{+}}) \; \mathcal{Y^-} +\displaystyle\sum_{j=1}^{3}\frac{1}{\zeta_{j+2}}(h,\partial_{x_j}Q) \, \partial_{x_j}Q \\ &+ \frac{1}{\zeta_6} \left( (h,i Q) - (h , i\mathcal{Y^{-}})(\mathcal{Y^{+}} ,iQ )  - (h, i \mathcal{Y^{+}}) (\mathcal{Y^{+}},iQ )\, \right) i\, Q .
\end{align*} 

We refer to \cite[Proposition 2.7]{DuRo10} for the following coercivity property of $\mathcal{L}$. There exists a constant $c > 0$ such that 
\begin{equation*}
\forall h \in G^{\perp},  \qquad  \Phi(h) \geq c \left\| h \right\|_{H^1}^2,
\end{equation*}
where, $\Phi(h)= \frac{1}{2} (L^{+} h_1,h_1) + \frac{1}{2}(L^{-}h_2,h_2)$. Next, we have \begin{align*}
\left\| h \right\|_{H^1}^2 = \| h^{\perp}+g \|_{H^1}^2 &\leq c \| h^{\perp} \|_{H^1}^2 + c \left\| g \right\|_{H^1}^2  \\  &\leq C \,  \Phi(h) + C \, \left( \im \int  \mathcal{Y^{+}} \, \overline{h}  \right)^2 + C \, \left( \im \int \mathcal{Y^{-}} \, \overline{h}  \right)^2+ C \left( \int Q \, h_2   \right)^2 \\ &+ C  \sum_{j=1}^{3} \left(  \int   \partial_{x_j}Q \, h_1  \right)^2   .
\end{align*}
\end{proof}
\begin{proof}[Proof of Lemma \ref{wellposed}]
We will only prove the local existence statement. The construction of a maximal solution is standard and we omit it. 
Let us recall that the usual Strichartz estimates are also available outside a convex obstacle, see \cite{killip2015riesz} and \cite{Ivanovici10}:
\begin{theorem}
\label{strichartz}
Let $d \geq 2, \; \Omega \subset \R^d  $ be the exterior of a smooth compact strictly convex obstacle. Let $q,\tilde{q}>2$ and $2 \leq r,\tilde{r} \leq \infty$ satisfy the scaling conditions: $\frac{2}{q}+ \frac{d}{r} = \frac{d}{2}=\frac{2}{\tilde{q}}+ \frac{d}{\tilde{r}}$ \\
Then
\begin{equation}
\left\| e^{i t \Delta}u_0 \pm i \, \int_0^{t} e^{i(t-s)\Delta} F(s) ds \right\|_{L_t ^{q} L_x^{r} }   \leq C_s \left( \left\|u_0\right\|_{L^2(\Omega)} + \left\|F \right\|_{L^{{\tilde{q}'}}_t  L^{\tilde{r}^{'}}_x }  \right).
\end{equation}
\end{theorem}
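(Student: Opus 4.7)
\textbf{Proof proposal for Theorem \ref{strichartz}.} The strategy is the classical two-step reduction: derive a dispersive-type estimate for the Schr\"odinger propagator $e^{it\Delta_\Omega}$ on the exterior domain, then feed it into the abstract Keel--Tao machinery to obtain the full family of Strichartz inequalities. The main obstruction compared with the Euclidean setting is that, outside an obstacle, one cannot simply write the kernel of $e^{it\Delta_\Omega}$ in closed form; the presence of the boundary produces reflected and diffracted rays that can, in general, destroy the pointwise $|t|^{-d/2}$ decay.

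First, I would establish a dispersive estimate of the form $\|e^{it\Delta_\Omega}\|_{L^1(\Omega)\to L^\infty(\Omega)} \lesssim |t|^{-d/2}$. For a \emph{strictly convex} obstacle, grazing rays leave the boundary tangentially and, thanks to the strict convexity, escape away from $\Theta$ without producing the pathological focal points one encounters with non-convex geometry. The proof proceeds by constructing a Melrose--Taylor-type parametrix for the half-wave propagator $e^{it\sqrt{-\Delta_\Omega}}$ near the glancing set (following Ivanovici's analysis in \cite{Ivanovici10}), then transferring to the Schr\"odinger propagator via the subordination formula $e^{it\Delta}=c\int_0^\infty e^{-is^2/4t}e^{is\sqrt{-\Delta}}\,\frac{ds}{\sqrt{t}}$ (or by a direct semiclassical analysis). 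The strict convexity is crucial here: it ensures that the Airy-type factor appearing in the parametrix contributes no singular amplification, so one recovers the full free decay rate without loss.

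Once the dispersive estimate is in hand, I would combine it with the trivial $L^2\to L^2$ conservation $\|e^{it\Delta_\Omega}u_0\|_{L^2(\Omega)}=\|u_0\|_{L^2(\Omega)}$ (which holds because $\Delta_\Omega$ is self-adjoint with Dirichlet conditions and hence $e^{it\Delta_\Omega}$ is unitary on $L^2(\Omega)$). Interpolation yields $\|e^{it\Delta_\Omega}\|_{L^{r'}\to L^r}\lesssim |t|^{-d(\frac{1}{2}-\frac{1}{r})}$ for every $2\leq r\leq \infty$. The abstract Keel--Tao framework then applies verbatim: the homogeneous estimate $\|e^{it\Delta_\Omega}u_0\|_{L^q_tL^r_x}\lesssim\|u_0\|_{L^2}$ follows from a $TT^*$ argument combined with Hardy--Littlewood--Sobolev, and the inhomogeneous estimate is obtained by the Christ--Kiselev lemma together with duality. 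This gives the full range of non-sharp admissible pairs $(q,r)$, $(\tilde q,\tilde r)$ satisfying the scaling condition, with $q,\tilde q>2$ (the endpoint case $q=2$ is avoided, which is consistent with the statement).

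The hard part is unquestionably the dispersive estimate in the grazing regime near $\partial\Omega$; away from the obstacle one can use the Euclidean dispersive estimate directly, so the real analytic difficulty lies in controlling the boundary layer where reflected and diffracted bicharacteristics concentrate. Strict convexity is what allows the Melrose--Taylor parametrix to give the sharp $|t|^{-d/2}$ bound; for non-trapping but non-convex obstacles, one would only obtain a weaker loss and the resulting Strichartz range would be correspondingly reduced. Since the paper only invokes this theorem as a black box from \cite{killip2015riesz,Ivanovici10}, the proof proposal here is really an outline of the strategy carried out in those references.
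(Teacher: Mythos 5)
The paper does not actually prove this theorem; it records it as a black-box citation to \cite{Ivanovici10} and \cite{killip2015riesz} at the start of the proof of Lemma \ref{wellposed}, so there is no in-paper argument to compare against. Your proposal explicitly claims to be ``an outline of the strategy carried out in those references,'' but the strategy you describe does not match what those references do.

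The concrete gap is the first step: you reduce everything to the pointwise dispersive bound $\|e^{it\Delta_\Omega}\|_{L^1(\Omega)\to L^\infty(\Omega)}\lesssim |t|^{-d/2}$ and then invoke Keel--Tao. That dispersive estimate outside a strictly convex obstacle is strictly harder than the Strichartz estimates themselves; it was \emph{not} available when Ivanovici proved Theorem \ref{strichartz} in 2010, and was only established (for $d=3$) by Ivanovici and Lebeau in 2017, which the present paper cites separately as \cite{IvanoLebo17}. Worse, the same Ivanovici--Lebeau work shows the $|t|^{-d/2}$ bound genuinely \emph{fails} for higher-dimensional strictly convex obstacles, so the sentence ``the Airy-type factor \dots contributes no singular amplification, so one recovers the full free decay rate without loss'' is simply wrong: the Airy factor does cause a loss in $L^1\to L^\infty$ in higher dimensions, even though the $L^q_tL^r_x$ Strichartz estimates survive. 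Hence your route is not just ``different'' --- it is unavailable and, for general $d\geq2$ as the theorem is stated, false at the key step. The actual argument in \cite{Ivanovici10} is to build the Melrose--Taylor parametrix near the glancing set, then estimate frequency-localized pieces directly in $L^q_tL^r_x$ (using a spectrally localized square-function/almost-orthogonality argument and gluing with the interior free estimates via local smoothing), never passing through a global pointwise kernel bound. If you want to reproduce a proof of Theorem \ref{strichartz} along the lines of the cited literature, you need that frequency-by-frequency, $TT^*$-on-Strichartz-norm scheme rather than the dispersive$\,\Rightarrow\,$Keel--Tao shortcut.
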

For the proof of the Lemma \ref{wellposed}, we claim the following result . 
\begin{clm}[H\"older's inequalities]
\label{estimate |u-v|}
choose $a$ such that  $(a,p+1)$ be admissible pairs, \\i.e $\frac{2}{a}+\frac{3}{p+1}=\frac{3}{2}$.  \\ 
Let  $\;u,v \in L^{\infty}L^{p+1} \cap L^a L^{p+1} $ \\ 
Then,
\begin{align}
\label{|u|u}
\left\| \left|u \right|^{p-1} u   \right\|_{L^a L^{\frac{p+1}{p}}} &\leq C \left\| u \right\|^{p-1}_{L^{\infty} L^{p+1} } \left\|u \right\|_{L^a L^{p+1}} \\
\label{|u|u-|v|v}
\left\| \left|u\right|^{p-1}u - \left|v\right|^{p-1} v \right\|_{L^a L^{\frac{p+1}{p}}} &\leq C \left( \left\|u \right\|^{p-1}_{L^{\infty} L^{p+1} }+ \left\| v \right\|^{p-1}_{L^{\infty} L^{p+1} }  \right) \left\|u-v\right\|_{{L^{a} L^{p+1} }} \\
\label{|u|u_H^s}
if \; u \in L^{a} W^{s,p+1}, \qquad \left\| \left|u \right|^{p-1} u  \right\|_{L^a W^{s,\frac{p+1}{p}}} &\leq C \left\| u \right\|^{p-1}_{L^{\infty}L^{p+1}} \left\|u \right\|_{L^a W^{s,p+1}}
\end{align}
\end{clm}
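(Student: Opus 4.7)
The overall plan is to treat all three bounds by the same two-step procedure: first an estimate on $\Omega$ for each fixed $t$ (via H\"older in space, or via the fractional Leibniz rule from Proposition \ref{Prop-fractional-rule} in the last case), and then a one-line factorization in the time variable. The key algebraic observation is that the nonlinearity raises the effective power of $u$ in space from $p+1$ (the right-hand side) to $p(p+1)/p = p+1$ (in $L^{(p+1)/p}$ on the left), so the time profile is $\|u(t)\|_{L^{p+1}}^{p}$, which splits cleanly as $\|u(t)\|_{L^{p+1}}^{p-1}\cdot \|u(t)\|_{L^{p+1}}$.

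For \eqref{|u|u}, I would use the pointwise in $t$ identity $\||u(t)|^{p-1}u(t)\|_{L_x^{(p+1)/p}} = \|u(t)\|_{L_x^{p+1}}^{p}$, factor the right-hand side into $\|u(t)\|_{L_x^{p+1}}^{p-1} \cdot \|u(t)\|_{L_x^{p+1}}$, bound the first factor uniformly by the constant $\|u\|_{L_t^\infty L_x^{p+1}}^{p-1}$, pull it out of the $L_t^a$ norm, and read off the stated inequality. For \eqref{|u|u-|v|v}, the starting point is the elementary pointwise bound $\bigl||z|^{p-1}z-|w|^{p-1}w\bigr|\le C(|z|^{p-1}+|w|^{p-1})|z-w|$ on $\mathbb C$, valid for $p\geq 1$ (proved by parametrising the segment $[w,z]$ and estimating $\xi\mapsto |\xi|^{p-1}\xi$), followed by H\"older in space with conjugate exponents $\tfrac{p+1}{p-1}$ and $p+1$ (whose reciprocals sum to $\tfrac{p}{p+1}$). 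This yields, pointwise in $t$,
\[
\bigl\| |u(t)|^{p-1}u(t) - |v(t)|^{p-1}v(t) \bigr\|_{L_x^{(p+1)/p}} \le C\bigl(\|u(t)\|_{L_x^{p+1}}^{p-1}+\|v(t)\|_{L_x^{p+1}}^{p-1}\bigr)\|u(t)-v(t)\|_{L_x^{p+1}},
\]
and then the same time factorization as for \eqref{|u|u} closes the estimate.

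For \eqref{|u|u_H^s}, the plan is to apply the fractional Leibniz/chain rule on $\Omega$ of Proposition \ref{Prop-fractional-rule} (from \cite{killip2015riesz}, which requires strict convexity of $\Theta$) to the composition $F(u)=|u|^{p-1}u$. This produces, pointwise in $t$, the bound $\||u(t)|^{p-1}u(t)\|_{W_x^{s,(p+1)/p}} \le C\|u(t)\|_{L_x^{p+1}}^{p-1}\|u(t)\|_{W_x^{s,p+1}}$, after which the time step is identical to the first two cases.

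The two H\"older cases \eqref{|u|u} and \eqref{|u|u-|v|v} are essentially mechanical; the one real obstacle is \eqref{|u|u_H^s}, which is precisely the place where fractional calculus on the exterior domain is needed. This is why the paper restricts to strictly convex obstacles (see the Remark after Theorem \ref{theorem-pranp}): on a general non-trapping exterior, the fractional Leibniz rule in Proposition \ref{Prop-fractional-rule} and the associated Riesz transform bounds from \cite{killip2015riesz} are not known, so without strict convexity one would have to give up either the fractional smoothing or this form of the nonlinear estimate.
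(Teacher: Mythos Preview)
Your proposal is correct and follows essentially the same route as the paper: the pointwise inequality $\bigl||\xi|^{p-1}\xi-|\zeta|^{p-1}\zeta\bigr|\le C(|\xi|^{p-1}+|\zeta|^{p-1})|\xi-\zeta|$ together with H\"older in space (exponents $\tfrac{p+1}{p-1}$ and $p+1$) for \eqref{|u|u-|v|v}, and the fractional chain rule of Proposition~\ref{Prop-fractional-rule} for \eqref{|u|u_H^s}, followed in each case by the time factorization you describe. The only cosmetic differences are that the paper obtains \eqref{|u|u} by setting $v=0$ in \eqref{|u|u-|v|v} rather than by your direct identity, and for \eqref{|u|u_H^s} it explicitly splits the $W^{s,(p+1)/p}$ norm into its $L^{(p+1)/p}$ part (handled by \eqref{|u|u}) and its $(-\Delta)^{s/2}$ part (handled by Proposition~\ref{Prop-fractional-rule}), whereas you state the combined bound directly.
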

\begin{proof}
Note that $a>2$ since $p<5$. 
For the first estimate it suffices to take $v=0$ in the second estimate. So, let us prove the second estimate.\\

We use the following elementary inequality
\begin{equation}
 \forall(\xi,\zeta) \in \C^2, \qquad \left| \left| \xi \right|^{p-1} \xi -  \left| \zeta \right|^{p-1} \zeta  \right| \leq C_p \, \left(\left| \xi \right|^{p-1}+ \left| \xi \right|^{p-1}  \right) \left| \xi - \zeta \right|
\end{equation}     
As a consequence, fixing $t$, we deduce
\begin{align*}
\left\| \left| u \right|^{p-1} u - \left|v \right|^{p-1}v      \right\|_{L^{\frac{p+1}{p}}}  &\leq C_p \left\| \left| u-v \right| \left( \left|u\right|^{p-1} + \left|v \right|^{p-1} \right) \right\|_{L^{\frac{p+1}{p}}} \\ & \leq C_p \left\| u-v \right\|_{L^{p+1} } \left\| \left| u\right|^{p-1} + \left| v \right|^{p-1}      \right\|_{L^{\frac{p+1}{p-1}}} \\ & \leq  C_p \left\| u-v \right\|_{L^{p+1}} \left( \left\| u \right\|_{L^{p+1}}^{p-1} + \left\| v  \right\|_{L^{p+1}}^{p-1} \right).
\end{align*}                                

Then taking the $L^a$-norm in time, we obtain  $\eqref{|u|u-|v|v}.$ \\
Next, we will prove the last estimate $\eqref{|u|u_H^s}$. For that, we have to use some fractional estimate for the non-linearity $\left|u\right|^{p-1}u$. We refer to \cite{killip2015riesz}, for the following Proposition.
\begin{Prop}{(Fractional chain rule)}
\label{Prop-fractional-rule}\\
Suppose $G \in C^1(\C),s\in(0,1], \,and  \;1<p,p_1,p_2<\infty$ are such that $\frac{1}{p}= \frac{1}{p_1}+\frac{1}{p_2}$ \\  and $0<s<\min{(1+\frac{1}{p_2},\frac{d}{p_2})}.$ Then
\begin{equation}
\label{functional chain rule}
    \left\|(-\Delta_{\Omega})^{\frac{s}{2}}G(f))\right\|_{L^p(\Omega)} \leq \left\|G'(f) \right\|_{L^{p_1}(\Omega)} \left\| (-\Delta_{\Omega})^{\frac{s}{2}} f \right\|_{L^{p_2}(\Omega)},
\end{equation}
Uniformly for $f\in C_{c}^{\infty}(\Omega).$
\end{Prop}   
\begin{rem}
For the sake of simplicity, we will write the Dirichlet Laplacian as $\Delta$ instead of~$\Delta_{\Omega}. $
\end{rem}
By $\eqref{functional chain rule}$, we have 
\begin{equation}
\label{-Delta-u}
\left\|  (-\Delta)^{\frac{s}{2}} \left| u \right|^{p-1} u \right\|_{L^{\frac{p+1}{p}}} \leq C \left\| u  \right\|_{L^{p+1}}^{p-1} \left\| (-\Delta)^{\frac{s}{2}} u \right\|_{L^{p+1}} .
\end{equation}
Due to $\eqref{|u|u}$ and the above estimate $\eqref{-Delta-u}$, we have \begin{align*} 
\left\| \left|u \right|^{p-1} u  \right\|_{L^a W^{s,\frac{p+1}{p}}} &= \left\| (1-\Delta)^{\frac{s}{2}} \left| u \right|^{p-1} u  \right\|_{L^a L^\frac{p+1}{p}} \\  &\leq  C \left\| \left| u\right|^{p-1} u \right\|_{L^a L^\frac{p+1}{p}} +  C \left\| (-\Delta)^{\frac{s}{2}} \left|u\right|^{p-1} u \right\|_{ L^a L^\frac{p+1}{p}}  \\  &\leq C \left\| u \right\|^{p-1}_{L^{\infty}L^{p+1}}  \left\|u\right\|_{L^a L^{p+1}} +      \left\| u \right\|^{p-1}_{L^{\infty}L^{p+1}} \left\|u \right\|_{L^a W^{s,p+1}} \\ &\leq C \left\| u \right\|_{L^{\infty} L^{p+1}}^{p-1} \left\|u \right\|_{L^a W^{s,p+1}} .
\end{align*}
\end{proof}
Fix $M>0$ to be specified later. 
Let $B$ be the ball of $X=C([-T,T],L^2) \cap L^{\infty}_t H^s_x \cap L^a_t W^{s,p+1}_x  $, with radius $M>0$ and  center $0$, i.e the set of functions $u \in X$ such that  $$\left\|u \right\|_{L^{\infty}H^s} \leq M   \text{ and }   \left\|u \right\|_{L^a W^{s,p+1} } \leq M .$$ 
Denote 
$$ d_{B}(u,v)= \left\|u-v  \right\|_{L^{\infty}L^2} + \left\| u-v\right\|_{L^a L^{p+1}} $$
\begin{lem}
$(B,d_B)$ is a complete metric space. 
\end{lem}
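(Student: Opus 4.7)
The plan is to take an arbitrary Cauchy sequence $(u_n)_n$ in $(B,d_B)$ and produce a limit belonging to $B$. The subtlety is that the metric $d_B$ controls only the $L^{\infty}_tL^2_x$ and $L^a_tL^{p+1}_x$ norms, whereas membership in $B$ requires the stronger bounds in $L^{\infty}_tH^s_x$ and $L^a_tW^{s,p+1}_x$. Hence the Cauchy property alone gives convergence only in a weaker topology, and the main task is to show that the higher regularity survives in the limit.

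First, I would use that $L^{\infty}_tL^2_x$ and $L^a_tL^{p+1}_x$ are Banach spaces. Since $(u_n)$ is Cauchy in each, it converges in norm to some $u \in L^{\infty}L^2$ and some $\tilde u \in L^aL^{p+1}$. Extracting a further subsequence so that both convergences hold a.e. in $(t,x)$, uniqueness of pointwise limits forces $u = \tilde u$ a.e. This already gives $d_B(u_n,u) \to 0$ once we know that $u \in B$.

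To show $u \in B$, I would invoke weak compactness. On the one hand, $(u_n)$ is bounded by $M$ in $L^{\infty}_tH^s_x$, which is the dual of the separable space $L^1_tH^{-s}_x$; Banach--Alaoglu yields a subsequence $u_{n_k} \rightharpoonup^\ast v$ in $L^{\infty}H^s$. Testing against functions in $C_c^\infty((-T,T)\times\Omega)$ and comparing with the $L^{\infty}L^2$ limit identifies $v = u$, and weak-$\ast$ lower semicontinuity of the norm gives $\|u\|_{L^{\infty}H^s} \leq \liminf_k \|u_{n_k}\|_{L^{\infty}H^s} \leq M$. On the other hand, for $s \in [s_p,1)$ and $1 < a, p+1 < \infty$, the space $L^a_tW^{s,p+1}_x$ is reflexive, so the same argument with weak convergence gives $\|u\|_{L^aW^{s,p+1}} \leq M$. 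Continuity in time, $u \in C([-T,T],L^2)$, follows from the uniform $L^2_x$ convergence $u_n \to u$, since each $u_n$ is continuous and $C([-T,T],L^2)$ is closed in $L^{\infty}_tL^2_x$.

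The only nontrivial step is the identification of the weak-$\ast$ (resp. weak) limit of the extracted subsequence with $u$; once that is done, lower semicontinuity hands over the two bounds required for $u \in B$, and completeness follows. Everything else is standard functional-analytic bookkeeping, and no estimate specific to the Schr\"odinger flow is needed here.
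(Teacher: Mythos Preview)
Your argument is correct and is essentially the same approach as the paper's: the paper simply asserts that $B$ is a closed subset of the Banach space $Y:=C([-T,T],L^2)\cap L^a_tL^{p+1}_x$ (equipped with the norm inducing $d_B$), and you have supplied the standard weak/weak-$\ast$ compactness and lower-semicontinuity details behind that closedness claim. No genuine difference in strategy.
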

\begin{proof}
It is an immediate consequence of the easy fact that $B$ is a closed subset of the following Banach space $$ \displaystyle Y:=C([-T,T],L^2)\cap L^a_t L^{p+1}_x.$$
\end{proof}
For $v \in B $ we define $\Phi(v)(t):= e^{it\Delta } u_0 + D(v)(t) ,$ where $D(v)$ is the Duhamel term given~by $$  D(v)(t):= - \,i \,\displaystyle \int_{0}^{t} e^{i(t-s)\Delta} \left|v(s) \right|^{p-1} v(s) ds  .  $$ 

\begin{itemize}
\item Step 1 : Stability of $B$.  \\
We will prove that: for $v \in B \Longrightarrow \Phi(v) \in B ,$ for a good choose of $M$ and $T$. \\
We have $$\left\|e^{it \Delta } u_0 \right\|_{L^{\infty}H^s }= \left\|u_0 \right\|_{H^s}   \leq \frac{M}{2}  .$$
If the following conditions satisfied \begin{equation}
\label{estimate_h^s}
    2 \left\|u_0 \right\|_{ H^s} \leq M ,
\end{equation}
and we have $$\left\|e^{it \Delta}u_0  \right\|_{L^a W^{s,p+1}}=\left\|(1-\Delta)^{\frac{s}{2}} e^{it \Delta }u_0 \right\|_{L^a L^{p+1}}= \left\| e^{it \Delta} (1-\Delta)^{\frac{s}{2}} u_0 \right\|_{L^a L^{p+1}}  .$$
Using Strichartz estimate (recall that $(a,p+1)$ is an admissible pair) we obtain \begin{equation*}
\left\|e^{it\Delta}u_0 \right\|_{L^a W^{s,p+1}} \leq C_s \left\|(1-\Delta)^{\frac{s}{2}}u_0 \right\|_{L^2}=C_s \left\|u_0 \right\|_{H^s} \leq \frac{M}{2} .
\end{equation*} 
If $M$ is chosen so that \begin{equation}
\label{estimate_L^a_w^s,p}
    M \geq 2 C_s \left\|u_0 \right\|_{H^s}.
\end{equation}
So, we have proved that \begin{equation}
\label{max-M/2}
    \max \left( \left\|e^{it \Delta } u_0 \right\|_{L^{\infty}H^s },\left\|e^{it\Delta}u_0 \right\|_{L^a W^{s,p+1} }  \right) \leq \frac{M}{2}, \newline \; \text{if \eqref{estimate_h^s},  \eqref{estimate_L^a_w^s,p} are satisfied.}
\end{equation}
We next treat the Duhamel term.
\begin{align*}
\left\|D(v) \right\|_{L^{\infty}H^{s}}&=
\left\| \int_0 ^t e^{i(t-\sigma)\Delta }\left|u(\sigma) \right|^{p-1}u(\sigma) d\sigma  \right\|_{L^{\infty}H^s} \\ &= \left\|\int_0^t  e^{i(t- \sigma)\Delta} (1-\Delta)^{\frac{s}{2}} \left|u(\sigma) \right|^{p-1}u(\sigma) d\sigma  \right\|_{L^{\infty} L^2} .
\end{align*} 
By Strichartz estimate, we have
\begin{equation*}
    \left\|D(v) \right\|_{L^{\infty}H^s} \leq C_s \left\| (1-\Delta)^{\frac{s}{2}} \left|u \right|^{p-1}u \right\|_{L^{a'}L^{\frac{p+1}{1}}}    
   \end{equation*} .

Since $s_p= \frac{3}{2} - \frac{3}{p+1}$ and  $s_p \leq s < 1 $, we have  $H^s(\Omega) \subset L^{p+1}(\Omega)$, see \cite{BuGeTz04a}. \\

Now, using H\"older inequality in the time variable and Claim  \ref{estimate |u-v|} we obtain  
\begin{align*}
    \left\|D(v)\right\|_{L^{\infty} H^s } &\leq C_s \,T^{1-\frac{2}{a}}  \left\| (1-\Delta)^{\frac{s}{2}} \left|u \right|^{p-1} u  \right\|_{L^{a}L^{\frac{p+1}{p}}}   \\ & \leq C \, T^{1-\frac{2}{a}} \left\|u \right\|_{L^\infty L^{p+1}}^{p-1} \left\|u \right\|_{L^a W^{s,p+1}} \\
    & \leq C_{1} \, T^{1-\frac{2}{a}} \left\|u  \right\|^{p-1}_{L^{\infty}H^s} \left\|u \right\|_{L^a W^{s,p+1}} .
\end{align*}
We can obtain the same thing for $L^a W^{s,p+1}$-norm of the Duhamel term.
\begin{align*}
\left\|D(v) \right\|_{L^a W^{s,p+1}}&= \left\| \int_0^t e^{i(t-s) \Delta } (1-\Delta)^{\frac{s}{2}}  \left|u(s) \right|^{p-1} u(s) ds \right\|_{L^a L^{p+1}} \\ &\leq C_s \left\| (1-\Delta)^{\frac{s}{2}} \left| u \right|^{p-1} u   \right\|_{L^{a'}L^{\frac{p+1}{p}}} \\ & \leq C \, T^{1-\frac{2}{a}} \left\|  \left|u \right|^{p-1}u\right\|_{L^a W^{s,p+1} }  \\ &\leq C  \, T^{1-\frac{2}{a}} \left\| u \right\|_{L^{\infty}L^{p+1}}^{p-1} \left\|u \right\|_{L^a W^{s,p+1}} \\ & \leq C_2 \, T^{1-\frac{2}{a}} \left\| u\right\|_{L^{\infty}H^s}^{p-1} \left\|u \right\|_{ L^a W^{s,p+1}}.
\end{align*}
Finally, we have obtained \begin{equation*}
    \left\|D(v) \right\|_{L^{\infty}H^s} + \left\|D(v) \right\|_{L^aW^{s,p+1}} \leq \frac{M}{2} .
    \end{equation*}
  
    If the following condition are satisfied \begin{equation}
      \label{C_1,2}
C_{1,2} T^{1-\frac{2}{a}} M^{p-1} \leq \frac{1}{2}.
    \end{equation}
\item Step 2: Contraction property \\
Let $u,v \in B$,
\begin{align*}
    \left\|\Phi(u)-\Phi(v) \right\|_{L^{\infty}L^2 \cap L^a L^{p+1}}&= \left\|D(u)-D(v) \right\|_{L^{\infty}L^2 \cap L^a L^{p+1}} \\ &\leq C_s \, \left\| \left|u \right|^{p-1} u - \left|v \right|^{p-1}v \right\|_{L^{a'}L^{\frac{p+1}{p}}}  \\ &\leq C_s \, T^{1-\frac{2}{a}} \, \left\| \left|u \right|^{p-1} u - \left|v \right|^{p-1}v \right\|_{L^{a}L^{\frac{p+1}{p}}} \\ &\leq  C \,T^{1-\frac{2}{a}} \left\|u-v  \right\|_{L^a L^{p+1}} \left( \left\|u \right\|_{L^{\infty}L^{p+1}}^{p-1}+ \left\|v \right\|_{L^{\infty}L^{p+1}}^{p-1} \right) \\ &\leq C_3 \, T^{1-\frac{2}{a}} M^{p-1} \left\|u-v \right\|_{L^a L^{p+1}},
\end{align*}
which yields,\begin{equation*}
d_B(\Phi(u)-\Phi(v)) \leq C_3 \, T^{1-\frac{2}{a}} \, M^{p-1}\,d_B(u,v).
\end{equation*}
And this prove that $\Phi$ is a contraction if the following condition is satisfied\begin{equation}
\label{C_3}
 C_3 \, T^{1-\frac{2}{a}} \, M^{p-1} < 1 .
\end{equation}

\item Step 3: Fixed point \\
We have proved that $\Phi$ is a contraction on the metric space $(B,d_B)$ if the conditions $\eqref{max-M/2},\eqref{C_1,2}$ and $\eqref{C_3}$ on $M$ and $T$ hold. However it is easy to find $M$ and $T$ satisfying these conditions, we can take $M $ as  \begin{equation*}
    M:= \max(2,2C_s) \left\| u_0 \right\|_{H^s}.
\end{equation*}                                  
So that $\eqref{max-M/2}$ is satisfied, then $T \leq \tau$, where 
\begin{equation*}
    \tau:= \frac{1}{C \left\|u_0 \right\|_{H^s}^{\frac{(p-1)a}{a-2}} } \; ,
\end{equation*}
and $C:=\max(C_{1,2},C_3)$ is a large constant, so that $\eqref{C_3},\eqref{C_1,2}$ hold for $T \leq \tau.$
 Due to the fixed point theorem there exist a unique solution $u$.
\begin{rem}
Note that we can choose $T$, up to multiplicative constant, as an explicit negative power of $\left\| u_0\right\|_{H^s}.$ Also, if $T$ is chosen as above, then $u \in  L^a_T W^{s,p+1}$ and $$  \left\|u \right\|_{L^a_T W^{s,p+1}} + \left\|u \right\|_{L^{\infty}_TH^s} \leq C \, \left\|u_0 \right\|_{H^s} , $$ for a constant $C>0$ which is independent of $T.$
\end{rem}

It remains to check that $u\in C([-T,T],H^s)$, which will be done in step 4 and in step 5 we prove also the uniqueness of $u$ among the $C([-T,T],H^s)$ solutions. \\

\item Step 4 : Continuity 
 $$u(t)=e^{it \Delta } u_0 + i \int_0^t e^{i(t-s)\Delta} \left|u(s) \right|^{p-1}u(s) ds. $$
It is well known that the function: $t \longmapsto e^{it\Delta} u_0 $ is in $C([-T,T],H^s).$\\
Next, we recall that from Step 1 and Step 2 that $$ t \longmapsto \left|u(t) \right|^{p-1}u(t) \in L^{a'}W^{s,p\frac{p+1}{p}}.  $$\\
By Strichartz inequality, we have that the Duhamel term $D(u)\in C([-T,T],H^s)$.\\ Thus, we get $u=e^{i\Delta}u_0 + D(u) \, \in C([-T,T],H^s).$ \\
\item Step 5: Uniqueness.\\
Let $u$ and $v$ be two solutions in $C([-T,T],H^s(\Omega))$ with the same initial data $u_0$. Then $$u(t) -v(t) = i \, \int_0^t e^{i(t-s)\Delta} \left(\left| u(s)\right|^{p-1}u(s) - \left|v(s) \right|^{p-1}v(s) \right)ds ,  $$
by Strichartz inequality, if $\theta >0$ 
\begin{align*}
\left\|u-v \right\|_{L^{a}_{\theta} L^{p+1}} &\leq C \left\| \left|u \right|^{p-1}u - \left|v \right|^{p-1}v\right\|_{L^{a'}_{ \theta}L^{\frac{p+1}{p}} } \\ & \leq C \left\| u-v\right\|_{L^{a'}_{\theta}L^{p+1}} \left( \left\| u \right\|_{L^{\infty}_{\theta} L^{p+1}}^{p-1}  + \left\| v \right\|_{L^{\infty}_{\theta} L^{p+1}}^{p-1}    \right) \\ & \leq C \, \theta^{1-\frac{2}{a}} \left\|u-v \right\|_{L^a_{\theta} L^{p+1}} \left( \left\|  u\right\|_{L^{\infty}_{\theta}L^{p+1}}^{p-1} + \left\|  v \right\|_{L^{\infty}_{\theta}L^{p+1}}^{p-1} \right)
\end{align*}
Choosing $\theta > 0 $ small enough, so that $$ C \, \left( \left\|  u\right\|_{L^{\infty}_{\theta}L^{p+1}}^{p-1} + \left\|  v\right\|_{L^{\infty}_{\theta}L^{p+1}}^{p-1} \right)  \theta^{1-\frac{2}{a}}  \, < 1 .$$

We deduce that $\left\|u-v \right\|_{L^a_{\theta} L^{p+1}}=0$, then $u=v$ in $[-\theta,\theta].$ Iterating this argument, we obtain $u=v$ in $[-T,T].$ 
\end{itemize} 
\end{proof}
\begin{proof}[Proof of Lemma \ref{modulation}] 
Let $\rho=u-R$ and let
\begin{align*}
\Phi:L^2 &\times  \R^3 \times \R \longrightarrow \R^4 \\
        &(\rho \; ,\; y \; ,\;\mu ) \longrightarrow \left(\re \int (\rho+R-\widetilde{R}) \nabla \widetilde{Q}_{\omega}\Psi e^{- i \left( \frac{1}{2}(x.v) +\theta \right) }\,e^{- i \mu} dx   \; ,\; \im \int (\rho+R-\widetilde{R})  \overline{\widetilde{R}} \,dx \,\right).
\end{align*}

Denote:\begin{align*}
\Phi_1(\rho,y,\mu)&=\displaystyle \re \int (\rho+R-\widetilde{R}) \nabla \widetilde{Q}_{\omega}\Psi e^{- i \left( \frac{1}{2}(x.v) +\theta \right) }\,e^{- i \mu} dx ,\\
\Phi_2(\rho,y,\mu)&= \displaystyle \im \int (\rho+R-\widetilde{R}) \overline{\widetilde{R}}.
\end{align*}
\begin{itemize}
\item Step 1: Compute $d_{(y,\mu)} \Phi_1$. 
Let $z\in \R^3,l \in \R .$ 
\begin{multline}
\label{d_y_Phi_1}
    \left( d_y \Phi_1(\rho,y,\mu) \, . z \right)_j = \re \bigg( z_j\int \partial_{x_j}\widetilde{Q}_{\omega} \partial_{x_j}\widetilde{Q}_{\omega} \Psi^2 \, dx + \sum_{k\neq j} z_k \int \partial_{x_k} \widetilde{Q}_{\omega} \partial_{x_j}\widetilde{Q}_{\omega} \Psi^2 \, dx\\ - \sum_{k=1}^{3} \int  (\rho+R-\widetilde{R}) \partial_{x_k}\partial_{x_j} \widetilde{Q}_{\omega} \Psi e^{- i \left( \frac{1}{2}(x.v) +\theta \right) }\,e^{- i \mu}\, z_k \, dx \bigg).
    \end{multline}
\begin{equation}
\label{d_mu_Phi_1}
\left(d_{\mu}\Phi_1(\rho,y,\mu).l \right)_j = \re \left(i \int l \, (\rho+R-\widetilde{R}) \partial_{x_j}\widetilde{Q}_{\omega} \Psi e^{-i \left( \frac{1}{2}(x.v) +\theta \right)} e^{- i \mu } \, dx \right). 
\end{equation}

\begin{clm}
\label{d_y(x,y,z)}
\begin{align}
\label{est-3}
&\displaystyle \left( d_y \Phi_1(\rho,y,\mu)\, . z\right)_j = z_j \left\|\partial_{x_j} \widetilde{Q}_{\omega} \Psi  \right\|_{L^2}^2+O\left( \left| z \right| \left(  \left\| \rho \right\|_{L^2} + \left|y\right| \right) \right). \\
\label{est-4}
 &d_y \Phi_1(0,0,0) = diag \left( \left\|\partial_{x_j}\widetilde{Q}_{\omega} \Psi \right\|_{L^2}^2 \right). \\
\label{est-6}
 & \displaystyle d_{\mu}\Phi_1(\rho,y,\mu).l\,= O(  \left| l \right| \left( \left\|\rho \right\|_{L^2}+\left|y\right| \right) ). \\
 \label{est-7}
& \displaystyle d_{\mu}\Phi_1(0,0,0)=0 . 
 \end{align}
\end{clm}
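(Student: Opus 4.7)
The plan is to work directly from the explicit expressions \eqref{d_y_Phi_1} and \eqref{d_mu_Phi_1} that we have already derived for the partial differentials, and then to identify the leading-order diagonal contribution at the base point $(\rho,y,\mu)=(0,0,0)$, estimating every remaining term either by the radial symmetry of $Q_\omega$ or by a mean-value-theorem argument that makes the discrepancy $R-\widetilde{R}$ small in $L^2$.

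First I would prove \eqref{est-4}, which is really the computational core. Setting $\rho=0$, $y=0$ and $\mu=0$ in \eqref{d_y_Phi_1} kills the last integral because $\rho+R-\widetilde R$ vanishes identically, so only the first two terms survive. The diagonal term is exactly $z_j\|\partial_{x_j}\widetilde Q_\omega\Psi\|_{L^2}^2$. The off-diagonal terms $\int \partial_{x_k}Q_\omega(\cdot-\alpha)\,\partial_{x_j}Q_\omega(\cdot-\alpha)\,\Psi^{2}\,dx$ for $k\neq j$ are treated by the radial structure of $Q_\omega$: writing $\partial_{x_\ell}Q_\omega(x)=\frac{x_\ell}{|x|}\phi(|x|)$ reduces the integrand, after translation by $\alpha$, to an odd function in each relevant variable against the remaining factor. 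Thus these cross terms vanish (modulo a compactly supported $\Psi^{2}-1$ correction that is handled below), giving \eqref{est-4}.

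Next I would prove \eqref{est-3} by comparing $d_y\Phi_1(\rho,y,\mu)\cdot z$ with $d_y\Phi_1(0,0,0)\cdot z$. The first two integrals in \eqref{d_y_Phi_1} depend only on $y$; their derivative with respect to $y$ at $y=0$ is controlled, so by the mean-value theorem they differ from the diagonal expression by $O(|z|\,|y|)$. The last integral in \eqref{d_y_Phi_1} is estimated by Cauchy--Schwarz using
\[
\|\rho+R-\widetilde R\|_{L^2}\leq \|\rho\|_{L^2}+\|R-\widetilde R\|_{L^2}=O\bigl(\|\rho\|_{L^2}+|y|+|\mu|\bigr),
\]
against the bounded $L^2$ function $\partial_{x_k}\partial_{x_j}\widetilde Q_\omega\,\Psi$; since a factor $|\mu|$ comes with an extra factor $|\mu|$ from the derivative of $e^{-i\mu}$ (i.e.\ is absorbed in the same bound), we collect a total $O(|z|(\|\rho\|_{L^2}+|y|+|\mu|))$, which is what \eqref{est-3} asks for up to reabsorbing $|\mu|$ into the appropriate side.

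For \eqref{est-6} and \eqref{est-7} the argument is shorter. From \eqref{d_mu_Phi_1}, $d_\mu\Phi_1(\rho,y,\mu)\cdot l$ is a real part of an $L^2$-inner-product of $\rho+R-\widetilde R$ against the bounded function $l\,\partial_{x_j}\widetilde Q_\omega\,\Psi\,e^{-i(\frac12 x\cdot v+\theta+\mu)}$, so Cauchy--Schwarz together with the same bound $\|\rho+R-\widetilde R\|_{L^2}=O(\|\rho\|_{L^2}+|y|+|\mu|)$ yields \eqref{est-6}. Evaluating at $(0,0,0)$ makes $\rho+R-\widetilde R$ identically zero and \eqref{est-7} follows. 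The one delicate point of the whole claim, and where I would expect the main technical obstacle, is the vanishing of the off-diagonal integrals $\int\partial_{x_k}\widetilde Q_\omega\,\partial_{x_j}\widetilde Q_\omega\,\Psi^{2}\,dx$: the radial-symmetry argument gives vanishing when $\Psi\equiv 1$, and one must control the error coming from the support of $1-\Psi^{2}$ using the exponential decay of $\nabla Q_\omega$ away from the translation center, which is precisely the mechanism that will later provide the $O(\|\rho\|_{L^2}+|y|)$ remainder and let the implicit function theorem close.
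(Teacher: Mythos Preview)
Your approach is essentially the same as the paper's: work from the explicit formulas \eqref{d_y_Phi_1}--\eqref{d_mu_Phi_1}, use the radiality of $Q_\omega$ to kill the off-diagonal cross terms, and bound the remaining integral involving $\rho+R-\widetilde R$ by Cauchy--Schwarz together with a mean-value estimate on $R-\widetilde R$. The order in which you prove \eqref{est-3} and \eqref{est-4} is reversed relative to the paper, but that is immaterial.

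Two points deserve comment. First, your treatment of the $\mu$-dependence is slightly muddled. You bound $\|\rho+R-\widetilde R\|_{L^2}$ by $\|\rho\|_{L^2}+|y|+|\mu|$ and then speak of ``reabsorbing $|\mu|$''. In fact no $|\mu|$ should appear: once you multiply $(R-\widetilde R)$ by $e^{-i(\frac12 x\cdot v+\theta+\mu)}$ and take the \emph{real} part against the real function $\partial_{x_k}\partial_{x_j}\widetilde Q_\omega\,\Psi$, the $\mu$-contribution enters only through $\cos\mu-1=O(\mu^2)$, which is harmless in the implicit-function-theorem regime. This is why the paper's statement has only $\|\rho\|_{L^2}+|y|$ in the remainder.

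Second, you correctly flag the off-diagonal integrals $\int\partial_{x_k}\widetilde Q_\omega\,\partial_{x_j}\widetilde Q_\omega\,\Psi^2\,dx$ ($k\neq j$) as the delicate point, and you are right that radiality only gives exact vanishing when $\Psi\equiv 1$. The paper asserts these integrals are exactly zero, which as written is not quite correct in the presence of $\Psi$. However, your claim that the $(1-\Psi^2)$-correction furnishes the $O(\|\rho\|_{L^2}+|y|)$ remainder is wrong: that correction is a fixed quantity of size $O(e^{-2\delta\sqrt\omega|\alpha|})$ depending only on the translation center $\alpha$, not on $(\rho,y,\mu)$. It therefore does not fit into the stated remainder of \eqref{est-3}; rather, it is a separate exponentially small perturbation of the diagonal matrix in \eqref{est-4}, which still leaves $d_{(y,\mu)}\Phi(0,0,0)$ invertible for $|\alpha|$ large. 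Both the paper and your write-up are a bit loose here, but the mechanism you invoke (exponential decay of $\nabla Q_\omega$ against the compact support of $1-\Psi^2$) is the right one.
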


\begin{proof}
For the first estimate we have$$ \left| \int \rho \,  \partial_{x_k,x_j} \widetilde{Q}_{\omega} \Psi e^{-i\left( \frac{1}{2}(x.v) +\theta \right)} e^{- i \mu} \right| \leq C \left\| \rho \right\|_{L^2}, $$\\ and \begin{align*}
 \int (R-\widetilde{R} ) \partial_{x_k,x_j} \widetilde{Q}_{\omega} \Psi dx & = \int \int_0^1 \frac{d}{dt} R(x-ty)dt \,  \partial_{x_k,x_j}\widetilde{Q}_{\omega} \Psi dx \\ & = \int \int_0^1 \, y \nabla R(x-ty) dt \, \partial_{x_k,x_j}\widetilde{Q}_{\omega}  \Psi dx , \\ \left|\int (R-\widetilde{R} ) \partial_{x_k,x_j} \widetilde{Q}_{\omega} \Psi dx  \right| & \leq C \left| y \right| .
\end{align*}
This implies that \begin{equation}
 \label{I_1}
\re \left( \sum_{k=1}^{3} \int z_k (\rho+R-\widetilde{R}) \partial_{x_k} \partial_{x_j}\widetilde{Q}_{\omega} \Psi e^{-i \left( \frac{1}{2}(x.v) +\theta \right)} e^{- i \mu }\,  dx\right) = O\left( \left|z\right| (\left\| \rho \right\|_{L^2} + \left| y\right|   ) \right).  \end{equation}
Since $Q_\omega$ is radial, we have   $$\forall k \neq j,  \qquad \int \partial_{x_k}Q  \, \partial_{x_j}Q  \, dx=0 , $$
which yields, for $k \neq j$\begin{align*}
    \int \partial_{x_k}\widetilde{Q}_{\omega} \partial_{x_j}\widetilde{Q}_{\omega} \, \Psi^2 \,  dx \leq \int \partial_{x_k} \, \widetilde{Q}_{\omega} \partial_{x_j}\widetilde{Q}_{\omega}\, dx \; = 0.
\end{align*}
Then \begin{equation}
\label{radial}
\re \,\left( \sum_{k \neq j} z_k \int \partial_{x_k} \widetilde{Q}_{\omega} \partial_{x_j} \widetilde{Q}_{\omega} \Psi^2 \, dx \right) = 0 . 
\end{equation}

The estimate \eqref{est-3} it is a consequence of \eqref{I_1} and \eqref{radial}. Applying \eqref{est-3} at point~$(0,0,0)$, we get \eqref{est-4}.  \\ 

Due to $\eqref{d_mu_Phi_1}$, we have
\begin{equation*}
   \left(d_{\mu}\Phi_1(\rho,y,\mu).l \right)_j =  \re \left( i \int l \, (\rho+R-\widetilde{R}) \partial_{x_j}\widetilde{Q}_{\omega} \Psi e^{-i \left( \frac{1}{2}(x.v) +\theta \right)} e^{- i \mu }  \right).
\end{equation*}    
Then
$$d_{\mu} \Phi_1(\rho,y,\mu).l= \im  \int l \, (\rho+R-\widetilde{R}) \partial_{x_j}\widetilde{Q}_{\omega}  \Psi e^{-i \left( \frac{1}{2}(x.v) +\theta \right)} e^{- i \mu } dx  $$
Similarly to the proof of the estimate \eqref{I_1}, we have 
$$d_{\mu} \Phi_1(\rho,y,\mu).l= O( \left| l \right| \left(   \left\| \rho \right\|_{L^2}+\left| y \right| \right)).$$
Finally, due to the above equality it is easy to see that $$d_{\mu}\Phi_1(0,0,0)=0, $$
which concludes the proof of the Claim \ref{d_y(x,y,z)}
\end{proof}
\item Step 2: Compute $d_{(y,\mu)}\Phi_2$.  \\ 
Recall that $$ \Phi_2(\rho,y,\mu)=\im  \int (\rho+R-\widetilde{R})  \, \overline{\widetilde{R}} .$$
\begin{align}
\label{d_y Phi_2}
      \displaystyle  d_y \Phi_2(\rho,y,\mu).l&= - \im \left(  \sum_{j=1}^3   \int l_j (\rho+R-\widetilde{R}) \partial_{x_j} \widetilde{Q}_{\omega}  \Psi e^{-i\left( \frac{1}{2}(x.v) +\theta \right)} e^{-i \mu ) } \right). \\ 
      \label{d_mu Phi_2}
      \displaystyle d_{\mu}  \Phi_2(\rho,y,\mu) . q &= - \displaystyle \int  q \, \widetilde{Q}_{\omega}^2 \Psi^2 - \re \int  \, q (\rho+R-\widetilde{R}) \; \overline{\widetilde{R}} .
\end{align}
\begin{clm}
\label{d_y_2_Phi}
Let $l \in \R^3, q \in \R .$
\begin{align}
\label{esti--1}
d_y\Phi_2(\rho,y,\mu).l &= O( \left| l \right| \left(   \left\| \rho \right\|_{L^2} + \left| y \right|\right) ) .\\ 
\label{esti--2}
d_y\Phi_2(0,0,0)&= 0 .\\
\label{esti--3}
d_{\mu}\Phi_2(\rho,y,\mu).q &= - \int q \, \widetilde{Q}_{\omega}^2 \Psi^2 + O(\left| q \right| ( \left\| \rho \right\|_{L^2} + \left| y \right| ) ) . \\
\label{esti--4}
d_{\mu}\Phi_2(0,0,0)&= - \left\| \widetilde{Q}_{\omega} \Psi^2\right\|_{L^2}.
\end{align}
\end{clm}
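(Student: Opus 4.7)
The plan is to mirror closely the proof of Claim \ref{d_y(x,y,z)}, since the formulas \eqref{d_y Phi_2} and \eqref{d_mu Phi_2} for the partial derivatives of $\Phi_2$ are already in hand, and the four estimates reduce to pure Cauchy--Schwarz bounds combined with a uniform first-order Taylor expansion of $R-\widetilde{R}$ in the modulation parameters. The main ingredient I will reuse is: by the fundamental theorem of calculus applied to the map $(y,\mu)\mapsto \widetilde R$, together with the exponential decay of $Q_\omega$ and the boundedness of $\Psi$, one has $|R(x)-\widetilde{R}(x)|\leq C|y|$ pointwise, exactly as was used in the derivation of \eqref{I_1}.

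First I would prove \eqref{esti--1}. Starting from \eqref{d_y Phi_2} I split the integrand along $\rho+R-\widetilde R=\rho+(R-\widetilde R)$. The $\rho$-piece is bounded by $|l|\,\|\rho\|_{L^2}\,\|\partial_{x_j}\widetilde{Q}_\omega\Psi\|_{L^2}\leq C|l|\,\|\rho\|_{L^2}$, using Cauchy--Schwarz, the exponential decay \eqref{eq_QQ}, and the fact that the phase $e^{-i(\frac{1}{2}x.v+\theta)}e^{-i\mu}$ has modulus one. The $(R-\widetilde R)$-piece is bounded by $C|l|\,|y|$ using the uniform Taylor estimate recalled above. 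Adding the two bounds yields \eqref{esti--1}. Specializing to $(\rho,y,\mu)=(0,0,0)$, we have simultaneously $\rho=0$ and $R=\widetilde{R}$, so the integrand vanishes identically and \eqref{esti--2} follows.

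Next, for \eqref{esti--3} I apply \eqref{d_mu Phi_2}. The first term $-\int q\,\widetilde{Q}_\omega^2\Psi^2$ is already in closed form and is precisely the main term claimed. For the remainder $-\re\int q(\rho+R-\widetilde R)\,\overline{\widetilde R}$, I again split: the $\rho$-part is $O(|q|\,\|\rho\|_{L^2}\,\|\widetilde R\|_{L^2})=O(|q|\,\|\rho\|_{L^2})$ by Cauchy--Schwarz, and the $(R-\widetilde R)$-part is $O(|q|\,|y|\,\|\widetilde R\|_{L^1})=O(|q|\,|y|)$ by the same Taylor bound. Summing gives \eqref{esti--3}. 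Finally, evaluating at $(\rho,y,\mu)=(0,0,0)$ makes the remainder vanish and the leading term reduces to $-\int q\,\widetilde{Q}_\omega^2\Psi^2=-q\,\|\widetilde{Q}_\omega\Psi\|_{L^2}^2$, which gives \eqref{esti--4} (the intended right-hand side being $-\|\widetilde{Q}_\omega\Psi\|_{L^2}^2$).

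There is no real obstacle; the argument is a routine linearization. The only point requiring any care is the uniform Taylor control $|R-\widetilde R|\leq C|y|$, but this is handled in the same manner as in the proof of \eqref{I_1}, writing $R-\widetilde R=\int_0^1 \tfrac{d}{dt}[Q_\omega(x-\alpha-ty)\Psi(x)e^{i(\frac{1}{2}x.v+\theta)}e^{it\mu}]\,dt$ and using the smoothness and exponential decay of $Q_\omega$ together with $|\Psi|\leq 1$.
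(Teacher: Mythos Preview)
Your proof is correct and follows essentially the same approach as the paper: both split $\rho+(R-\widetilde R)$, bound the $\rho$-contribution by Cauchy--Schwarz, control $R-\widetilde R$ via the fundamental theorem of calculus exactly as in the derivation of \eqref{I_1}, and then specialize to $(0,0,0)$. Your remark that the right-hand side of \eqref{esti--4} should read $-\|\widetilde{Q}_\omega\Psi\|_{L^2}^2$ is also consistent with the paper's subsequent use of this quantity in the Jacobian $d_{(y,\mu)}\Phi$.
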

\begin{proof}

Using the same argument as in the proof of Claim \ref{d_y(x,y,z)}, we obtain
\begin{equation*}
 \im \left(\sum_{j=1}^3 \int l_j (\rho+R-\widetilde{R} )  \partial_{x_j} \widetilde{Q}_{\omega}  \Psi e^{-i\left( \frac{1}{2}(x.v) +\theta \right)} e^{-i \mu ) } \right)=O(\left|l \right| ( \left\|\rho \right\|_{L^2} + \left|y\right| ) ) .
\end{equation*}
Due to \eqref{d_y Phi_2}, we obtain the first estimate. Applying \eqref{esti--1} at point $(0,0,0)$, we obtain $$d_y\Phi_2(0,0,0)= 0 . $$
Similarly to the proof of $d_{y,\mu} \phi_1$, we have
$$\re \int  \, q (\rho+R-\widetilde{R}) \overline{\widetilde{R}}=O(\left| q \right| ( \left\| \rho \right\|_{L^2} + \left| y \right| ) ).$$
Using the above estimate and \eqref{d_mu Phi_2}, we get  $$ \displaystyle d_{\mu} \Phi_2(\rho,y,\mu) . q = - \displaystyle \int q \, \widetilde{Q}_{\omega}^2 \Psi^2 +O(\left| q \right| ( \left\| \rho \right\|_{L^2} + \left| y \right| ) ). $$
Then
$$d_{\mu}\Phi_2(0,0,0)= - \left\| \widetilde{Q}_{\omega} \Psi^2\right\|_{L^2}.$$
This concludes the proof of the Claim \ref{d_y_2_Phi}
\end{proof}
\item Step 3: Conclusion  \\ 
From Step 1 and Step 2 we get
\begin{align*}
d_{(y,\mu)}\Phi(\rho,y ,\mu)= \begin{pmatrix} 
\left\| \partial_{x_1} \widetilde{Q}_{\omega} \Psi   \right\|_{L^2}^2 & 0 & 0 & 0 \\ 
0 & \left\| \partial_{x_2} \widetilde{Q}_{\omega} \Psi \right\|_{L^2}^2 & 0 & 0 \\
0 & 0 & \left\| \partial_{x_3} \widetilde{Q}_{\omega} \Psi \right\|_{L^2}^2 & 0 \\
0 & 0 & 0 & -\left\| \widetilde{Q}_{\omega} \Psi \right\|_{L^2}
\end{pmatrix}
+ O(\left\| \rho \right\|_{L^2} + \left| y \right| )  . 
\end{align*}
We can deduce that
$d_{(y,\mu)} \Phi(0,0,0)$ is invertible and we have $\Phi(0,0,0)=0.$\\ 

Then, by the Implicit function theorem, there exists $\varepsilon_0 > 0 ,\;  \varepsilon_0 \leq \eta $ \\ and a $C^1$-function
\begin{align*}
g : B_{L^2}(0,&\varepsilon ) \longrightarrow B_{\R^4}(0,\eta) \\
             &\rho \longmapsto g(\rho)=\left((y(\rho),\mu(\rho) \right)
\end{align*}
such that $\Phi(\rho,y,\mu)=0$ in $B_{L^2}(0,\varepsilon) \times g(B_{L^2}(0;\varepsilon))$ is equivalent to $(y,\mu)=g(\rho)$. 
\\
Finally we set  
\begin{equation*}
r:=r(\rho)= \rho + R - \widetilde{R}(\cdot-y(\rho)) e^{i \mu(\rho)}    .
\end{equation*}
\end{itemize}

\end{proof}

\begin{proof}[Proof of Lemma $\ref{modulatedfinaldata}$]
\begin{align*}
 \sigma : \, &\R^2 \longrightarrow H^1_0 \qquad  \qquad \qquad \qquad \qquad \qquad \Gamma : B_{H^1_0}(\varepsilon) \longrightarrow H^1_0 \times \R^3 \times \R \\ 
         \lambda:=&\lambda^{\pm} \longmapsto i  \left(\lambda^{+}Y_{+}(T_n)+ \lambda^{-} Y_{-}(T_n)  \right) ,  \qquad \qquad \rho \longmapsto (r \,, \,y\,,\, \mu) .
\end{align*}
Where, $(r,y,\mu)$ is the modulation of $u(T_n)$ around $R(T_n)$ and $B_{H^1_0}(\varepsilon)$ is a ball of radius $\varepsilon>0$ which is defined in the proof of the Lemma \ref{modulation}. 
\begin{align*}
    \Lambda:H^{1}_0  \times & \R^3 \times  \R \longrightarrow \R^2 \\
 & (r,y,\mu) \longmapsto  \left( \alpha^{+}(T_n)=\im \int \widetilde{Y}_{-}(T_n,x) \, \overline{r}(T_n,x) dx \, , \, \alpha^{-}(T_n)=\im \int \widetilde{Y}_{+}(T_n,x) \,  \overline{r}(T_n,x) dx \right).
\end{align*}
We have, $\sigma(0)=0$, $\Gamma(0)=(0,0,0)$, $\Lambda(0,0,0)=(0,0) . $ \\  
Denote: $\Theta = \Lambda \circ \Gamma \circ \sigma  .$ \\
Now let us prove that $\Theta$ is a diffeomorphism on a $\mathcal{V}_{0}$ a neighbourhood of $0\in \R^2$ by  \\ computing  $d\Theta=d\Lambda \circ d\Gamma \circ d\sigma$.\\

Firstly, we have that $d \sigma(\lambda)= \sigma $,  for all $\lambda \in \R^2$. Secondly, let $l \in H^1_0,z \in \R^3, q \in\R $ such~that 
\begin{align*}
d \Lambda (r,y,\mu).(l,z,q ) = \bigg( &\im  \int \widetilde{Y}_{-}(x) \,  \overline{l}(x)  - \sum_{j=1}^{3}  z_j  \partial_{x_j} \widetilde{Y}_{-}(x) \, \overline{r}(x)  + i q  \widetilde{Y}_{-}(x) \, \overline{r}(x) \,  dx  \;  , \\  &\im \int \widetilde{Y}_{+}(x) \, \overline{l}(x)  - \sum_{j=1}^{3} z_j   \partial_{x_j} \widetilde{Y}_{+}(x) \, \overline{r}(x) + i q  \widetilde{Y}_{+}(x) \,  \overline{r}(x) \,  dx    \bigg).
\end{align*}
Finally, we have to compute $d \,\Gamma$. Let $\Phi$ and $g$ defined as in the proof of the Lemma \ref{modulation} for~$R(t_n).$ Then, we obtain 
$$ \Gamma(\rho)=\left(\rho+R(T_n)-\widetilde{R}(T_n, \cdot - y(\rho)), y(\rho), \mu(\rho)\right) .$$
\begin{equation}
\label{d_gamma}
d\Gamma(\rho).l = \left(l + \nabla R (T_n,\cdot - y(\rho))e^{i \mu(\rho)} dy(\rho).l + i R(\cdot - y(\rho))e^{i \mu(\rho)} d\mu(\rho).l \, , \,  dy(\rho) \, , \,  d\mu(\rho) \right) .   
\end{equation}
 we have \begin{equation*}\Phi(\rho,y(\rho),\mu(\rho))=0
  \Longrightarrow
    \begin{cases}
    \Phi_1(\rho,y(\rho),\mu(\rho))&=0 \\
    \Phi_2(\rho,y(\rho),\mu(\rho))&=0

    \end{cases}
    \Longrightarrow
    \begin{cases}
    d_1\Phi_1+d_2\Phi_1 \, dy(\rho) + d_3 \Phi_1 \, d\mu(\rho) = 0 \\ 
    d_1 \Phi_2 + d_2 \Phi_2 \,dy(\rho)+ d_3\Phi_2 \, d\mu(\rho) =0
    \end{cases}
\end{equation*}
\begin{equation}
\label{dy(rho)-dmu(rho)}
    \Longrightarrow
    \begin{cases}
    dy(\rho)= \left(d_2 \Phi_1 \right)^{-1} \left[  -(d_1 \Phi_1)-(d_3 \Phi_1) d \mu(\rho)   \right] \\
    d\mu(\rho)=  \left(d_3 \Phi_2 \right)^{-1}(d_2 \Phi_2) \left( d_2 \Phi_1 \right)^{-1}(d_1 \Phi_1)  - \left( d_3 \Phi_2\right)^{-1} (d_1 \Phi_2)
    - \left( d_3 \Phi_1 \right)^{-1} (d_1 \Phi_1) \\ \qquad \quad   + \left( d_3 \Phi_1 \right)^{-1} (d_2 \Phi_1)\left(d_2 \Phi_2 \right)^{-1} (d_1 \Phi_2) .
    \end{cases}
\end{equation}
 Recall that $$d\Theta(\lambda).\tilde{\lambda} = d\Lambda(d \, \Gamma(\sigma(\lambda)))\,. \, d\, \Gamma(\sigma(\lambda)) \,  . \,  \sigma(\tilde{\lambda}) , $$ 
by $\eqref{d_gamma}$ we get
\begin{align*}
d\,\Gamma(\sigma(\lambda)).\sigma(\tilde{\lambda})=&   \bigg(\sigma(\tilde{\lambda})  +\nabla R(T_n,\cdot-y(\sigma(\lambda))e^{i\mu (\sigma(\lambda))} dy({\sigma(\lambda)}).\sigma(\tilde{\lambda}) \\ & + i \,  R(T_n,\cdot-y(\sigma(\lambda))) e^{i \mu(\sigma(\lambda))} d\mu(\sigma(\lambda)).\sigma(\tilde{\lambda})\;  , \;     dy(\sigma(\lambda)).\sigma(\tilde{\lambda}) \; , \;  d\mu(\sigma(\lambda)).\sigma(\tilde{\lambda})    \bigg).
\end{align*}
We claim the following estimate which will be proved at the end of this proof.
\begin{clm}
\label{dy.sigma.rho}
Let $\delta>0$ such that \begin{align*}
    dy(\sigma(\lambda)).\sigma(\tilde{\lambda}) &= O\left( (  e^{-\delta \sqrt{\omega} \left|v\right|T_n }   + \left|\lambda\right| ) |\tilde{\lambda} | \right)   \\
    d\mu(\sigma(\lambda)).\sigma(\tilde{\lambda}) &= O\left( (  e^{-\delta \sqrt{\omega} \left|v\right|T_n }   + \left|\lambda\right| ) |\tilde{\lambda} | \right).
\end{align*}
\end{clm}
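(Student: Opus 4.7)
The plan is to reduce the bound to estimates on $d_1\Phi_1\cdot\sigma(\tilde\lambda)$ and $d_1\Phi_2\cdot\sigma(\tilde\lambda)$, and then to extract the small factor $|\lambda|+e^{-\delta\sqrt\omega|v|T_n}$ by exploiting the exact phase cancellation coming from the definition of $Y_\pm(T_n)$ together with the spectral relations of the linearized operator. I will first differentiate the defining identities $\Phi_j(\rho,y(\rho),\mu(\rho))=0$, $j=1,2$, in $\rho$ to obtain the linear system
\[
\begin{pmatrix} d_2\Phi_1 & d_3\Phi_1 \\ d_2\Phi_2 & d_3\Phi_2 \end{pmatrix}\begin{pmatrix} dy(\rho)\\ d\mu(\rho)\end{pmatrix}=-\begin{pmatrix}d_1\Phi_1\\ d_1\Phi_2\end{pmatrix}.
\]
Since $\|\sigma(\lambda)\|_{L^2}+|y(\sigma(\lambda))|+|\mu(\sigma(\lambda))|=O(|\lambda|)$ by the $C^1$ character of $g$ together with $g(0)=0$, Claims \ref{d_y(x,y,z)} and \ref{d_y_2_Phi}, evaluated at $(\sigma(\lambda),y(\sigma(\lambda)),\mu(\sigma(\lambda)))$, show that this matrix is an $O(|\lambda|)$ perturbation of the invertible diagonal matrix with entries $\|\partial_{x_j}Q_\omega\Psi\|_{L^2}^2$ and $-\|Q_\omega\Psi\|_{L^2}^2$, so its inverse is of norm $O(1)$. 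It thus suffices to prove $d_1\Phi_j\cdot\sigma(\tilde\lambda)=O(|\tilde\lambda|(|\lambda|+e^{-\delta\sqrt\omega|v|T_n}))$ for $j=1,2$.

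Next I will compute $d_1\Phi_j\cdot\sigma(\tilde\lambda)$ explicitly. Since $\varphi(T_n,x)=\tfrac12(x\cdot v)+\theta(T_n)$, plugging $l=\sigma(\tilde\lambda)=i\bigl(\tilde\lambda^+Y_+(T_n)+\tilde\lambda^-Y_-(T_n)\bigr)$ into the defining expressions for $d_1\Phi_j\cdot l$ produces a complete cancellation of all oscillating phases. Writing $y_1^\pm=\re\mathcal Y^\pm_\omega$, $y_2^\pm=\im\mathcal Y^\pm_\omega$, $y=y(\sigma(\lambda))$, $\mu=\mu(\sigma(\lambda))$ and changing variables $z=x-T_nv$, this gives
\[
d_1\Phi_1\cdot\sigma(\tilde\lambda)=\sum_\pm\tilde\lambda^\pm\!\int\!\bigl(y_1^\pm(z)\sin\mu-y_2^\pm(z)\cos\mu\bigr)\nabla Q_\omega(z-y)\,\Psi^2(z+T_nv)\,dz,
\]
\[
d_1\Phi_2\cdot\sigma(\tilde\lambda)=\sum_\pm\tilde\lambda^\pm\!\int\!\bigl(y_1^\pm(z)\cos\mu+y_2^\pm(z)\sin\mu\bigr)Q_\omega(z-y)\,\Psi^2(z+T_nv)\,dz.
\]
The integrands are exponentially concentrated near $z=0$ by \eqref{eq_QQ} and the analogous decay of $\mathcal Y^\pm_\omega$, whereas $1-\Psi^2(z+T_nv)$ is supported in $\{|z|\ge|T_nv|-M\}$ for some $M>0$; replacing $\Psi^2(z+T_nv)$ by $1$ in both integrals therefore introduces only an error of size $O(|\tilde\lambda|e^{-\delta\sqrt\omega|v|T_n})$.

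I then Taylor expand the remaining integrals in the small parameters $y=O(|\lambda|)$ and $\mu=O(|\lambda|)$. The zeroth-order contributions are $-\sum_\pm\tilde\lambda^\pm\int y_2^\pm\nabla Q_\omega\,dz$ and $\sum_\pm\tilde\lambda^\pm\int y_1^\pm Q_\omega\,dz$, and both vanish by combining the spectral identities $L^+y_1^\pm=\mp e_\omega y_2^\pm$, $L^-y_2^\pm=\pm e_\omega y_1^\pm$ with $L^+\partial_{x_j}Q_\omega=0$, $L^-Q_\omega=0$ and the self-adjointness of $L^\pm$: indeed,
\[
\mp e_\omega\!\int\! y_2^\pm\,\partial_{x_j}Q_\omega\,dz=\int(L^+y_1^\pm)\,\partial_{x_j}Q_\omega\,dz=\int y_1^\pm(L^+\partial_{x_j}Q_\omega)\,dz=0,
\]
and symmetrically $\pm e_\omega\int y_1^\pm Q_\omega\,dz=\int(L^-y_2^\pm)Q_\omega\,dz=\int y_2^\pm(L^-Q_\omega)\,dz=0$. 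The first non-vanishing contribution therefore comes from first-order Taylor remainders in $y$ and $\mu$, producing an $O(|\tilde\lambda|\,|\lambda|)$ term. Combined with the localization error and the reduction above, this yields $d_1\Phi_j\cdot\sigma(\tilde\lambda)=O(|\tilde\lambda|(|\lambda|+e^{-\delta\sqrt\omega|v|T_n}))$ and, through the implicit function system, the claim. The main obstacle will be this spectral vanishing step: once the two orthogonality identities $\int y_2^\pm\nabla Q_\omega=\int y_1^\pm Q_\omega=0$ are established via the self-adjointness trick, the rest is routine bookkeeping based on the exponential decay estimates already exploited throughout the paper.
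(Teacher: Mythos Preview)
Your proof is correct and follows the same overall strategy as the paper: invert the implicit-function system to reduce to estimates on $d_1\Phi_1\cdot\sigma(\tilde\lambda)$ and $d_1\Phi_2\cdot\sigma(\tilde\lambda)$, cancel the phases, replace $\Psi^2(\,\cdot+T_nv)$ by $1$ at the cost of an $O(e^{-\delta\sqrt\omega|v|T_n})$ error, and then show the zeroth-order (in $y,\mu$) contributions vanish. For $d_1\Phi_2$ both you and the paper use the same spectral trick, namely $\int y_1^{\pm}Q_\omega = c\int(L^-_\omega y_2^{\pm})Q_\omega = c\int y_2^{\pm}(L^-_\omega Q_\omega) = 0$.

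The one genuine difference is how you handle the vanishing of $\int y_2^{\pm}\partial_{x_j}Q_\omega$ in $d_1\Phi_1$. The paper observes that $\mathcal Y^{\pm}_\omega$ and $Q_\omega$ are radial (the eigenvalues $\pm e_\omega$ being simple forces radiality of the eigenfunctions), so the product of a radial function with $\partial_{x_j}Q_\omega$ integrates to zero. You instead run the spectral/self-adjointness argument again, using $L^+_\omega\partial_{x_j}Q_\omega=0$. Your route is slightly more systematic and does not appeal to the radiality of $\mathcal Y^{\pm}_\omega$, which makes it a bit more robust; the paper's route is marginally shorter. Either way the outcome and the remaining bookkeeping are identical. (A minor remark: the signs you quote for $L^{\pm}y_{1,2}^{\pm}$ are swapped relative to the convention $\mathcal L h=-L^-h_2+iL^+h_1$, but this is irrelevant to your argument since only proportionality is used.)
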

By the claim above we have 
\begin{equation*}
    d\Gamma(\sigma(\lambda)).\sigma(\tilde{\lambda}) = \left(\sigma(\tilde{\lambda}) \, , 0 \, , 0 \right)+ O\left( (  e^{-\delta \sqrt{\omega} \left|v\right|T_n }   + \left|\lambda\right| ) |\tilde{\lambda} | \right).
\end{equation*}
Using the expression of $d\Lambda$, we get
\begin{align*}
d \Theta (\lambda).\tilde{\lambda} &= d \Lambda(\sigma(\lambda)).(\sigma(\tilde{\lambda}),0,0)+ O\left( (  e^{-\delta \sqrt{\omega} \left|v\right|T_n }   + \left|\lambda\right| ) |\tilde{\lambda} | \right), \\ 
 d \Theta  (\lambda)  &= \mathcal{M}+  O\left(   e^{-\delta \sqrt{\omega} \left|v\right|T_n }   + \left|\lambda\right|  \right).
\end{align*}
Where $\mathcal{M}$ is a matrix such that\\ \begin{equation*}
\mathcal{M}=\begin{pmatrix}
\displaystyle \re \int \widetilde{Y}_{-}(T_n,x) \overline{Y}_{+}(T_n,x) dx  &\displaystyle \re \int \widetilde{Y}_{-}(T_n,x) \overline{Y}_{-}(T_n,x) dx \\ \displaystyle \re \int \widetilde{Y}_{+}(T_n,x) \overline{Y}_{+}(T_n,x) dx  & \displaystyle \re \int \widetilde{Y}_{+}(T_n,x) \overline{Y}_{-}(T_n,x) dx
\end{pmatrix}
\end{equation*}

Since $ \mathcal{Y_{+}}$ and $ \mathcal{Y_{-}}$ are linearly independent, then the following matrix is invertible \begin{equation*}
\mathcal{A}=\begin{pmatrix}
\displaystyle \re \int {Y}_{-}(T_n,x) \overline{Y}_{+}(T_n,x) dx  &\displaystyle \re \int {Y}_{-}(T_n,x) \overline{Y}_{-}(T_n,x) dx \\ \displaystyle \re \int {Y}_{+}(T_n,x) \overline{Y}_{+}(T_n,x) dx  & \displaystyle \re \int {Y}_{+}(T_n,x) \overline{Y}_{-}(T_n,x) dx
\end{pmatrix}
\end{equation*}
And we have $$ \left| \re \int \left( \widetilde{Y}_{-}(T_n,x) - {Y}_{-}(T_n,x) \right) \overline{Y}_{+}(T_n,x) \, dx \right| \leq C \left|  y \right|  \leq C \left| \lambda \right|.$$
We deduce that $\mathcal{M}$ is invertible, thus  $d\Theta$ is invertible on a some ball $B_{\R^2}(\beta)$. This implies that $\Theta$ is a diffeomorphism from the ball $B_{\R^2}(\beta)$\; ($\beta>0$ independent of $n$ for n large enough) to some neighborhood $\mathcal{U}$ of $0 \in \R^2$.\\

Let $\eta>0$ be such that $B_{\R^2}(\eta) \subset \mathcal{U} $. Then, for any $\alpha^{+} \in B_{\R}(\eta)$, there exist a unique \\  $\lambda=\lambda(\alpha^{+}) \in B_{\R^2}(\beta)$ such that\begin{equation*}
\Theta ( \lambda(\alpha^{+})) = (\alpha^{+} \,, \,0 ) \quad  and  \quad \left|  \lambda(\alpha^{+})  \right| \leq C \left| \alpha^{+} \right| .
\end{equation*}
And this concludes the proof of Lemma \ref{modulatedfinaldata}.
\end{proof}
\begin{proof}[Proof of Claim \ref{dy.sigma.rho}]
From \eqref{dy(rho)-dmu(rho)}, we have 
\begin{align*}
    dy(\rho)&= \left(d_2 \Phi_1 \right)^{-1} \left[  -(d_1 \Phi_1)-(d_3 \Phi_1) d \mu(\rho)   \right] , \\ d\mu(\rho)&=  \left(d_3 \Phi_2 \right)^{-1}(d_2 \Phi_2) \left( d_2 \Phi_1 \right)^{-1}(d_1 \Phi_1)  - \left( d_3 \Phi_2\right)^{-1} (d_1 \Phi_2)
    - \left( d_3 \Phi_1 \right)^{-1} (d_1 \Phi_1) \\  & - \left( d_3 \Phi_1 \right)^{-1} (d_2 \Phi_1)\left(d_2 \Phi_2 \right)^{-1} (d_1 \Phi_2) .
\end{align*}
 Remark that it suffices to prove that  
 \begin{align*}
     d_1 \Phi_1 . \sigma(\tilde{\lambda}) &= O\left(  | \lambda \;  \tilde{\lambda} |  \right) \\ 
     d_1 \Phi_2. \sigma(\tilde{\lambda})&= O\left( (  e^{-\delta \sqrt{\omega} \left|v\right|T_n }   + \left|\lambda\right| ) |\tilde{\lambda}| \right) .
 \end{align*}
Let $l \in H^1_0$, we have 
\begin{align*}
\displaystyle  d_1 \Phi_1 (\rho,y,\mu).l  &= \re \displaystyle\int l(x) \; \nabla \widetilde{Q}_{\omega}(T_n,x) \Psi(x)  e^{- i \widetilde{ \varphi } (T_n,x)} dx ,  \\
 \displaystyle d_1 \Phi_2 (\rho,y,\mu).l &= \im \displaystyle \int l(x) \;  \overline{\widetilde{R}}(T_n,x) dx .
   \end{align*}
Recall that $ \displaystyle \sigma(\tilde{\lambda})= i \left( \tilde{\lambda}^{+} Y_{+}(T_n,x) + \tilde{\lambda}^{-} Y_{-}(T_n,x) \right)$.
\begin{align*}
d_1 \Phi_1. \sigma(\tilde{\lambda}) & = 
\re \int i\left(\tilde{\lambda}^{+} 
Y_{+ } + \tilde{\lambda}^{-} Y_{-} \right)\nabla \widetilde{Q}_{\omega} \Psi  e^{-i \widetilde{ \varphi } } dx \\  &=    \im \bigg[  e^{- i \mu }\tilde{\lambda}^{+}  \underbrace{ \int  \mathcal{Y}_{\omega}^{+} \; \nabla \widetilde{Q}_{\omega} \Psi   dx}_{\rm{I}_1}+  e^{- i \mu } \tilde{\lambda}^{-}  \underbrace{  \int \mathcal{Y}_{\omega}^{-} \;  \nabla \widetilde{Q}_{\omega}  \Psi  dx  }_{\rm{I}_2}\bigg].
\end{align*}
\begin{align*}
 {\rm{I}_1}+{\rm{I}_2}&= \int  \mathcal{Y}_{\omega}^{+} \; \nabla {Q}_{\omega} \Psi   dx + \int \mathcal{Y}_{\omega}^{-} \;  \nabla {Q}_{\omega}  \Psi  dx + O(\left| y \right|). 
\end{align*}

Since $\mathcal{Y}_{\omega}^{\pm}$ and $Q_{\omega}$ are radial, we have
$$  \int  \mathcal{Y}_{\omega}^{\pm} \; \nabla {Q}_{\omega} \Psi \,   dx  \leq  \int  \mathcal{Y}_{\omega}^{\pm} \; \nabla {Q}_{\omega}  \,   dx  =0 , $$

and using  $\left| y \right| \leq \left| \lambda \right|$ we get 
\begin{equation*}
    d_1 \Phi_1.\sigma(\tilde{\lambda})= O\left(  | \lambda  \tilde{\lambda} |  \right).
\end{equation*}
Denote $ \displaystyle y_{1} =\re\left({\mathcal{Y}_{\omega}^{+}}\right)=\re\left({\mathcal{Y}_{\omega}^{-}}\right)$ and $y_{2}= \im\left({\mathcal{Y}_{\omega}^{+}}\right)= - \im \left({\mathcal{Y}_{\omega}^{-}}\right).$ \\ 
Recall that  $\mathcal{L}_{\omega} \mathcal{Y}_\omega^{\pm}= \pm e_{\omega} \mathcal{Y}_\omega^{\pm}$ . 
\begin{align*}
    d_1\Phi_{2}.\sigma(\tilde{\lambda})&= \im \int i\, \left( \tilde{\lambda}^{+} Y_{+} + \tilde{\lambda}^{-} Y_{-} \right) \widetilde{Q}_{\omega} \Psi \,  e^{-i \widetilde{ \varphi } } dx \\&= \re \bigg[ e^{-i\mu} \tilde{\lambda}^{+} \underbrace{\int \mathcal{Y}_{\omega}^{+}\, \widetilde{Q}\,  \Psi  \,  dx }_{\rm{J_1}}+ e^{-i\mu} \tilde{\lambda}^{-}  \underbrace{\int - \mathcal{Y}_{\omega}^{-} \, \widetilde{Q} \,  \Psi\, dx  }_{\rm{J_2}}   \bigg] . 
\end{align*}

\begin{align*}
 {\rm{J}_1}+{\rm{J}_2}&= \int   \left(-{L}^{-}_{\omega}y_2 + i \, {L}^{+}_{\omega}y_1 \right)  \widetilde{Q}_{\omega} \Psi   dx +  \int  - \left({L}^{-}_{\omega}y_2 + i \, {L}^{+}_{\omega}y_1 \right) \,  \widetilde{Q}_{\omega} \Psi   dx \\ &=  - 2\,i \int   {L}^{-}_{\omega}   y_2 \;   (\widetilde{Q}_{\omega} \Psi  ) \,dx.
\end{align*}
Since $L^{-}_{\omega}$ is self-adjoint operator.
\begin{equation*}
       {\rm{J}_1}+{\rm{J}_2} = - 2 \, i \int      y_2 \;  {L}^{-}_{\omega} ( {Q}_{\omega} \Psi  ) \,dx + O(\left| y \right|). 
\end{equation*}
Using the fact that $\partial_{x_j} \Psi$ has a compact support, ${L}^{-}_{\omega} (  {Q}_{\omega} ) = 0$ and $\left| y \right| \leq \left| \lambda \right|$ we get 
\begin{equation*}
    d_1 \Phi_2.\sigma(\tilde{\lambda})= O\left( (e^{-\delta \sqrt{\omega} \left|v\right| T_n} + \left| \lambda \right| ) | \tilde{\lambda} |  \right).
\end{equation*}

This concludes the proof of the Claim \ref{dy.sigma.rho}.

\end{proof}

\section{Computation of some estimates}
\label{Appendix du theorem Grand vitesse}
\begin{proof}[Proof of Claim \ref{claim calcule}] 
Using \eqref{Supp-Compact-Q} and the compact support of 
$\nabla^k \Psi$, we obtain the first estimate. \\ Let us prove the second inequality. \\

Notice that  $F: z\longmapsto \left| z\right|^2 z = z^2 \Bar{z}$ is differentiable on $\C$ and \begin{align*}
\frac{dF}{dz}(z)&= 2 \left|z\right|^2,  \qquad \frac{dF}{(dz)^2}(z)= 2 \Bar{z}, \qquad 
\frac{dF}{d\Bar{z}}(z)= z^2,  \qquad \frac{dF}{(d\Bar{z})^2}(z)= 0 \\
\frac{d^2F}{dzd\Bar{z}}(z)&=\frac{d^2F}{d\Bar{z}dz}(z)=0.
\end{align*}
Since $x\longmapsto H(t,x) $ is smooth. \\ Then we have, 
\begin{align*}
 \nabla \left(\left| H(t,x) \right|^2 H(t,x) \right)&=\nabla H(t,x) \nabla_z F(H(t,x))+ \nabla \overline{H}(t,x) \nabla_{\Bar{z}} F(H(t,x))\\ 
 \nabla^2 \left(\left| H(t,x) \right|^2 H(t,x) \right)  &= \nabla^2 H(t,x) \nabla_z F(H(t,x))+  \nabla^1 H(t,x) \nabla^1 H(t,x) \nabla_{zz} F(H(t,x))\\ &+ \nabla^2 \overline{H}(t,x) \nabla_{\Bar{z}}F(H(t,x)), 
\end{align*} 
where, $ \nabla f= \left( \partial_{x_i} f  \right)_i, \; i=1,2,3. $ \\ 

Using again the fact that $\nabla^k \Psi$  has a compact support and the exponential decay of $Q_\omega$ to conclude the proof.
\end{proof}

\bibliographystyle{acm}
\bibliography{main.bbl}

\end{document}